\numberwithin{equation}{section}
\newtheorem{theorem}{Theorem}[section]
\newtheorem{corollary}[theorem]{Corollary}
\newtheorem{lemma}[theorem]{Lemma}
\newtheorem{proposition}[theorem]{Proposition}
\newtheorem{definition}[theorem]{Definition}
\newtheorem{assumption}[theorem]{Assumption}
\newtheorem{remark}[theorem]{Remark}
\newcommand{\etal}{ et al. }
\newcommand{\ba}{\begin{array}}
\newcommand{\ea}{\end{array}}
\newcommand{\bit}{\begin{itemize}}
\newcommand{\eit}{\end{itemize}}
\newcommand{\be}{\begin{equation}}
\newcommand{\ee}{\end{equation}}
\newcommand{\bea}{\begin{eqnarray}}
\newcommand{\eea}{\end{eqnarray}}
\newcommand{\orth}{\mathsf{St}_{d,r}}
\newcommand{\stief}{\mathsf{St}_{d,r}}
\newcommand{\grass}{\mathsf{Gr}_{d,r}}
\newcommand{\mskew}{\mathrm{skew}}
\newcommand{\mF}{f}
\newcommand{\Rbb}{\mathbb{R}}
\newcommand{\st}{\mbox{s.t.}}
\newcommand{\mtr}{\mbox{tr}}
\newcommand{\tF}{\widetilde{F}}
\newcommand{\nn}{\nonumber}
\newcommand{\diag}{\text{diag}}
\newcommand{\proj}{\mathcal{P}}
\newcommand{\LBB}{\mathrm{LBB}}
\newcommand{\Rmn}[1]{\uppercase\expandafter{\romannumeral#1}}
\newcommand{\rblack}[1]{{\color{black}{#1}}}
\newcommand{\Fsf}{\mathsf{F}}
\newcommand{\Tsf}{\mathsf{T}}
\newcommand{\Ebb}{\mathbb{E}}
\newcommand{\Bsf}{\mathsf{B}}
\newcommand{\llangle}{\left \langle}
\newcommand{\rrangle}{\right \rangle}
\newcommand{\Gcal}{\mathcal{G}}
\newcommand{\Dcal}{\Gcal^{\mathrm{R}}}
\newcommand{\Pcal}{\mathcal{P}}
\newcommand{\Fcal}{\mathcal{F}}
\newcommand{\Kcal}{\mathcal{K}}
\newcommand{\Ocal}{\mathcal{O}}
\newcommand{\Tbf}{\mathbf{T}}
\newcommand{\Rscr}{\mathcal{R}}
\numberwithin{equation}{section}
\newcommand{\Pbb}{\mathbb{P}}
\newcommand{\Dbf}{\mathbf{D}}
\newcommand{\rgrad}{\mathrm{grad}}
\newcommand{\up}{\mathrm{up}}
\newcommand{\Mcal}{\mathcal{M}}
\newcommand{\ubf}{\mathbf{u}}
\newcommand{\abf}{\mathbf{a}}
\newcommand{\bbf}{\mathbf{b}}
\newcommand{\cbf}{\mathbf{c}}
\newcommand{\dbf}{\mathbf{d}}
\newcommand{\fbf}{\mathbf{f}}
\newcommand{\1}{\mathbf{1}}
\numberwithin{theorem}{section}
\begin{document}

\title{Vector Transport-Free SVRG with General Retraction for Riemannian  Optimization: Complexity Analysis and Practical Implementation}
\author{Bo Jiang \thanks{School of Mathematical Sciences, Key Laboratory for NSLSCS of Jiangsu Province,
Nanjing Normal University, Nanjing 210023, China. Email: {\tt jiangbo@njnu.edu.cn.}}  \and Shiqian Ma \footnote{Department of Systems Engineering and Engineering Management, The Chinese University of Hong Kong, Shatin, N. T., Hong Kong. Email: {\tt sqma@se.cuhk.edu.hk.}}
\and Anthony Man-Cho So \thanks{Department of Systems Engineering and Engineering Management, and, by courtesy, CUHK–BGI Innovation Institute of Trans–omics, The Chinese University of Hong Kong, Shatin, N. T., Hong Kong. E–mail:
{\tt manchoso@se.cuhk.edu.hk.}}
\and Shuzhong Zhang \thanks{Department of Industrial and Systems Engineering, University of Minnesota, Minneapolis, MN
55455, USA. Email: {\tt zhangs@umn.edu}.}
}
\date{\today}
\maketitle
\begin{abstract}
In this paper, we propose a vector transport-free stochastic variance reduced gradient (SVRG) method with general retraction for empirical risk minimization over Riemannian manifold. Existing SVRG methods on manifold usually consider a specific retraction operation, and involve additional computational costs such as parallel transport or vector transport. The vector transport-free SVRG with general retraction  we propose in this paper handles general retraction operations, and do not need additional computational costs mentioned above. As a result, we name our algorithm S-SVRG, where the first ``S" means simple. We analyze the iteration complexity of S-SVRG for obtaining an $\epsilon$-stationary point and its local linear convergence by assuming the \L ojasiewicz inequality, which naturally holds for PCA and holds with high probability for matrix completion problem. We also incorporate the Barzilai-Borwein step size and design a very practical S-SVRG-BB method. Numerical results on PCA and matrix completion problems are reported to demonstrate the efficiency of our methods.
\end{abstract}
\textbf{Keywords:}
Stochastic Variance Reduced Gradient, Riemannian Manifold, Orthogonality Constraints, Principal Component Analysis, Matrix Completion

\section{Introduction}
The stochastic variance reduced gradient (SVRG) method proposed by Johnson and Zhang \cite{johnson2013accelerating} has been shown to be very effective for empirical risk minimization problems that involve large-scale training data set in the objective. There have been many variants of SVRG for solving nonsmooth and convex problem \cite{xiao2014proximal}, nonconvex problem \cite{allen2016variance,reddi2016stochastic,aravkin2016smart}, and optimization on manifold \cite{kasai2016riemannian,xu2016stochastic,zhang2016fast}. In this paper, we propose a vector transport-free SVRG with general retraction (S-SVRG) that minimizes empirical risk over manifold:
\be \label{prob:manifold-0}
  \min_{X \in \Rbb^{d \times r}}  f(X) \coloneqq \frac1n \sum_{i = 1}^n f_i(X), \quad \st \quad X\in \Mcal,
\ee
where $\Mcal$ denotes a Riemannian manifold, and $f_i(X) \colon \Mcal \rightarrow \Rbb$ is differentiable.
Although \cite{kasai2016riemannian,xu2016stochastic,zhang2016fast,aravkin2016smart} also studied SVRG for solving \eqref{prob:manifold-0}, we will discuss later that our S-SVRG is more general and efficient, in the sense that it can handle general objective function, general retraction operation, and does not need additional computational costs such as parallel transport and vector transport that are required in \cite{kasai2016riemannian,xu2016stochastic,zhang2016fast}.

For the ease of presentation, we mainly focus on discussing the case when the manifold in \eqref{prob:manifold-0} is the set of orthonormal matrices:
\be \label{prob:manifold}
  \min_{X \in \Rbb^{d \times r}}  f(X) \coloneqq \frac1n \sum_{i = 1}^n f_i(X), \quad \st \quad X\in \orth \coloneqq \{X \in \Rbb^{d \times r} \colon X^{\Tsf} X = I_r\},
\ee
where $f_i(X) \colon \orth \rightarrow \Rbb$ is differentiable, $I_r$ is the $r$-th order identity matrix, and $\orth$ is the compact Stiefel manifold.
The Grassmann manifold $\grass$ takes the quotient representation as $\stief / \mathsf{St}_r$  \cite{edelman1998geometry}, here $\mathsf{St}_r \coloneqq \mathsf{St}_{r,r}$.
Problem \eqref{prob:manifold} has wide applications such as principal component analysis (PCA) \cite{jolliffe2014principal}, the Karcher mean problem \cite{kasai2016riemannian}, the joint diagonalization problem \cite{theis2009soft}, the domain adaptation problem \cite{politz2016interpretable}, and a recovering problem in dictionary learning \cite{sun2016complete}, just to name a few.

\subsection{Related works}

There are mainly two classes of SVRG methods for manifold optimization. \cite{kasai2016riemannian,xu2016stochastic,zhang2016fast} belong to the first class and they construct the stochastic variance reduced Riemannian gradients by invoking parallel transport or vector transport. Specifically, Kasai, Sato and Mishra \cite{kasai2016riemannian} mainly consider SVRG on Grassmann manifold, and the retraction considered is exponential map \footnote{For general Riemannian manifold, it is stated in \cite{absil2008optimization} that ``computing the exponential is, in general, a computationally daunting task''. For a comparison of the computational cost for the exponential map in $\orth$ and other retractions, see \cite{jiang2015framework, gao2016new}.}. Moreover, the algorithm in \cite{kasai2016riemannian} requires diminishing step size to guarantee the global convergence and its local linear convergence requires the positiveness of the Riemannian Hessian at a non-degenerate local minimizer. Xu and Ke \cite{xu2016stochastic} consider SVRG for eigenvalue problem whose objective function is quadratic and the retraction used is polar decomposition. The linear convergence of the algorithm in \cite{xu2016stochastic} requires that the initial point is sufficiently close to the optimal solution. Zhang, Reddi and Sra \cite{zhang2016fast} consider SVRG for general Riemannian manifold (R-SVRG), and the retraction used is the exponential map.
However, R-SVRG restricts the objective function $f$ on a compact set $\mathcal{X}$ of the considered manifold.
\cite{zhang2016fast} proves that the IFO-calls complexity (i.e., the total number of component gradient evaluations) of R-SVRG for achieving an $\epsilon$-stationary point is $O(\zeta^{\frac12} {n^{2/3}}/{\epsilon} + n)$, where $\zeta$ depends on the sectional curvature and the diameter of $\mathcal{X}$, and this result also requires that $f$ is geodesically $L$-smooth and the sectional curvature of $\mathcal{X}$ has finite lower and upper bounds. Linear convergence of R-SVRG is shown under the condition that $f$ is globally gradient dominated.
Besides, \cite{zhang2016fast} also shows the linear convergence of R-SVRG under the assumption that $f$ is geodesically convex. However, it should be noted that every geodesically convex function on a compact manifold is a constant \cite{bishop1969manifolds}.
Papers in the second class include \cite{shamir2015stochastic,shamir2015fast,weijie2016quadratic,aravkin2016smart} and they do not need parallel transport or vector transport. Shamir proposes the VR-PCA algorithm \cite{shamir2015stochastic,shamir2015fast} for solving PCA problem whose objective function is quadratic. Linear convergence is established under the assumption that the initial point is sufficiently close to the optimal solution. The Stiefel-SVRG proposed by Wu \cite{weijie2016quadratic} is also for solving PCA problem, and linear convergence to stationary point is shown by proving that the \L ojasiewicz inequality holds for PCA problem.
Aravkin and Davis \cite{aravkin2016smart} propose a very general SVRG algorithm for solving nonconvex and nonsmooth composite problems that include \eqref{prob:manifold} as a special case. The IFO-calls complexity of algorithm in \cite{aravkin2016smart} is also shown to be $O(n^{2/3}/\epsilon+n)$. However, the retraction operation considered in \cite{aravkin2016smart} is gradient projection, and it is not clear how to extend the result to other retractions. The differences of the existing methods and our S-SVRG are summarized in Table \ref{table:result}.

\begin{table}[!htbp]
\centering
\caption{Comparison of SVRG for manifold optimization: ``IFO'' denotes whether the IFO-calls complexity is analyzed} \label{table:result}
~\\
\renewcommand{\arraystretch}{1.7}
\begin{footnotesize}
\begin{tabular}{@{\hspace{0.2mm}}l@{\hspace{3.2mm}}l@{\hspace{3.2mm}}l@{\hspace{3.2mm}}l@{\hspace{3.2mm}}l@{\hspace{3.2mm}}c@{\hspace{0.2mm}}}
\Xhline{0.6pt}
method & obj. & manifold & retraction & additional cost &  IFO  \\
\Xhline{1.1pt}
VR-PCA \cite{shamir2015fast}& quadratic & Stiefel & gradient projection & --- &   $\times$ \\
Stiefel-SVRG \cite{weijie2016quadratic} & quadratic & Stiefel & general & --- &    $\times$ \\
R-SVRG \cite{kasai2016riemannian} & general & Grassmann & exponential &  parallel translation  & $\times$ \\
SVRRG \cite{xu2016stochastic} & quadratic & Stiefel & polar decomposition & vector transport   & $\times$ \\
RSVRG  \cite{zhang2016fast}  & general & compact& exponential & parallel translation &    $\surd$ \\
SMART-SVRG \cite{aravkin2016smart}  &   general & general & gradient projection & --- &  $\surd$ \\
Our S-SVRG & general & general & general & ---  & $\surd$ \\
\Xhline{0.6pt}
\end{tabular}
\end{footnotesize}
\end{table}

\subsection{Our contributions}

Our main contributions lie in several folds.
(i) Our S-SVRG handles general objective function, general manifold, and general retraction steps (including all the seven retractions that will be discussed in Appendix \ref{section:retractions}), and it does not need additional costs such as parallel transport or vector transport. (ii) We analyze the convergence and IFO-calls complexity of S-SVRG, and we do not need the various conditions in \cite{zhang2016fast}. As a by-product, we also analyze the convergence and IFO-calls complexity of a vector transport-free randomized stochastic gradient descent with general retraction (S-SGD) for solving \eqref{prob:manifold}. (iii) We prove the local linear convergence of S-SVRG using \L ojasiewicz inequality, which holds naturally for PCA problem and holds with high probability for matrix completion problems. Therefore, the condition required for local linear convergence is weaker than the ones used in the previous works \cite{kasai2016riemannian,xu2016stochastic,zhang2016fast,aravkin2016smart}. (iv) We incorporate the Barzilai-Borwein step size to S-SVRG and design a very practical S-SVRG-BB method, and this resolves the issue of choosing step size in S-SVRG.

\subsection{Organization}
The rest of this paper is organized as follows. We introduce some preliminaries on manifold optimization in Section \ref{section:preliminaries}. We propose our S-SVRG method for solving \eqref{prob:manifold} and analyze its IFO-calls complexity and local linear convergence in Section  \ref{section:usvrg}. As a by-product, we also provide an analysis of SGD for solving \eqref{prob:manifold}. Extensions to more general manifolds are studied in Section \ref{section:extensions}. We propose the S-SVRG-BB method in Section \ref{section:usvrgBB}. Numerical results on PCA and matrix completion problems are presented in Section \ref{section:numerical}. Finally, we conclude the paper in Section \ref{section:conclusion}.

\section{Preliminaries}\label{section:preliminaries}

Throughout this paper, we make the following assumption for \eqref{prob:manifold}.
\begin{assumption}\label{assumption_lipschitz}
 $f_i(X)\colon \orth \rightarrow \Rbb$ is differentiable, and its Euclidean gradient $\nabla f_i(X)$ is $L$-Lipschitz continuous over $\orth$, {\it i.e.},
\be\label{equ:g:lips}
\|\nabla f_i(X) - \nabla f_i(Y)\|_{\Fsf} \leq L \|X - Y\|_{\Fsf}, \quad \forall  X, Y \in \orth.
\ee
\end{assumption}
It follows that $\nabla f(X)$ is also $L$-Lipschitz continuous over $\orth$.

We first introduce some basic notions of optimization on Riemannian manifold $\Mcal$.  For each $X \in \Mcal$, the tangent space is denoted by
$\Tbf_X \Mcal$.   We  define the \emph{inner product} on $\Tbf_X \Mcal$ as  $\langle \cdot, \cdot \rangle_X$ and the corresponding induced norm  $\|E\|_X = \sqrt{\langle E, E\rangle_X}$ is an equivalent norm to the Frobenius norm, namely, there exist $\nu, \gamma > 0$ such that
\be  \label{equ:gradf:norm}
\nu \|E\|_{\Fsf}^2 \leq \langle E, E \rangle_{X} \leq \gamma  \|E\|_{\Fsf}^2, \quad \forall E \in \Tbf_X \Mcal.
\ee

The Riemannian gradient  $\rgrad\, f(X)$  is the unique element of $\Tbf_X \Mcal$  satisfying
\be\label{equ:gradf}
\langle \rgrad\, f(X), E \rangle_{X}  = \mathrm{D}f(X)[E] = \langle \nabla f(X), E \rangle,  \quad \forall E \in \Tbf_X \Mcal,
\ee
where $\langle \cdot, \cdot \rangle$ is the Euclidean inner product.

The feasible gradient descent method is based on the notion of retraction. Given any $X \in \Mcal$,  the {\it retraction}   $\Rscr(t) \coloneqq \Rscr(X, t E)$ along the direction $E \in \Tbf_X \Mcal$ is a smooth map from $\Tbf_X \Mcal$ to $\Mcal$ that satisfies
 \be \label{equ:Yt}
\quad \Rscr(0)  = X, \ \Rscr'(0)  = E, \quad \forall t \in [0, T_{\Rscr}],
 \ee
 where $\Rscr'(0) = \frac{\mathrm d}{\mathrm d t} \Rscr(t)\big |_{t = 0}$.
Starting from a given initial point $X^0$, the feasible method based on the retraction updates the iterates as
\be\label{alg:fg}
X^{k+1} = \Rscr\big({X^k}, \rblack{\tau_k} E^k\big),\   k = 0, 1, 2, \ldots
\ee
where $\tau_k > 0$ is the step size and $E^k \in \Tbf_X \Mcal$. 


In the following we give the definitions of stationary point and $\epsilon$-stationary point of \eqref{prob:manifold-0}.

\begin{definition}[Stationary point]\label{lemma:optimality}
 $X \in \Mcal$ is called  a stationary point of problem \eqref{prob:manifold-0} if $\rgrad\, f(X) =0. $
\end{definition}


\begin{definition}[$\epsilon$-stationary point]
$X \in \Mcal$ is called  an $\epsilon$-stationary point of problem \eqref{prob:manifold-0} if $\|\rgrad\, f(X)\|_{\Fsf}^2 \leq \epsilon$.
\end{definition}

\begin{definition}[stochastic $\epsilon$-stationary point]\label{def:stoc-eps-stat-point}
Suppose  $X^r \in \Mcal$ is returned by a stochastic feasible method for \eqref{prob:manifold-0}, we call $X^r$  a stochastic $\epsilon$-stationary point if
$$\Ebb \big[\|\rgrad\, f(X^r) \|_{\Fsf}^2 \big] \leq \epsilon,$$
where the expectation is taken with respect to the whole stochasticity of the algorithm.
\end{definition}

With slight abuse of notation, we still use $\rgrad\,f(X)$ to denote Riemannian gradient on $\orth$. We now give a complete description of $\rgrad\,f(X)$.
For each $X \in \stief$, the tangent space at $X$ is referred to $\Tbf_X \stief :=\{Z \in \Rbb^{d \times r} \colon X^{\Tsf}Z+Z^{\Tsf}X=0\}$. For any $\rho > 0$, define the inner product on $\Tbf_X \stief$ as
$\langle E_1, E_2 \rangle_X =  \llangle E_1, \mathbf{P}_{\rho,X} E_2\rrangle, \forall E_1, E_2 \in \Tbf_X \stief$, where  $\mathbf{P}_{\rho,X} =  I_d - \big(1 - {1}/{(4 \rho)}\big) XX^{\Tsf}$.
Each point in $\grass$ is essential an equivalent class $[X]  = \{XQ_r \colon  Q_r \in \mathsf{St}_r\}$, where $\mathsf{St}_r$ stands for $\Ocal_{r,r}$ for short. The tangent space at $[X]$ is given as
$\Tbf_{[X]} \grass = \{Z \in \Rbb^{d \times r} \colon X^{\Tsf} Z = 0\}$ and the corresponding inner product is always taken as $\langle E_1, E_2 \rangle_{[X]} = \langle E_1, E_2 \rangle, \ \forall E_1,E_2 \in \Tbf_{[X]} \grass$; see \cite{edelman1998geometry}. For simplicity of notation, we  denote $[X]$ by $X$. Given $\rho \geq 0$, define the operator
\be\label{equ:Drho:X:Y}
\Dbf_{\rho}(X,Y) \coloneqq (I_r - XX^{\Tsf}) Y + 4\rho X \mskew(X^{\Tsf} Y),
\ee
where $\mskew(X^{\Tsf} Y) = (X^{\Tsf} Y - Y^{\Tsf} X)/2$. From \eqref{equ:gradf}, we know the Riemannian gradient on $\orth$ can be defined as
$$
\rgrad\,f(X) = \Dbf_{\rho}(X,\nabla f(X))
$$
with $\rho = 0$ for $\grass$ and $\rho > 0$ for  $\stief$. Note that when $X^{\Tsf} \nabla f(X) = \nabla f(X)^{\Tsf} X$, we have $\rgrad\, f(X) \equiv \Dbf_{0}(X,\nabla f(X))$. Besides, \eqref{equ:gradf:norm} holds for $\orth$ with $\nu = \min\left\{1, 1/(4\rho)\right\}$ and $\gamma = 1$.

\section{A S-SVRG method for problem \eqref{prob:manifold}}\label{section:usvrg}

In this section we propose a S-SVRG method for solving \eqref{prob:manifold}. 
We first establish a sufficient decrease property of retraction of $\orth$, which plays an important role in establishing the complexity results of S-SVRG.

\subsection{Sufficient decrease property} \label{subsection:lemmas}
Consider a retraction  $\Rscr(t)$ of $\orth$ with $\Rscr(0) = X$. \footnote{For the retractions considered in this paper, we know that $T_{\Rscr} = + \infty$; see Appendices \ref{section:retraction:stiefel} and \ref{section:retraction:grass}.} We aim to establish the so-called sufficient decrease property of $f(\Rscr(t))$, i.e., there is a sufficient reduction from $f(X)$ to $f(\Rscr(t))$. By the compactness of $\orth$, there exists a constant $C > 0$ such that
$$\|  \nabla f(X) \|_{\Fsf} \leq C, \quad \forall X \in \orth. $$
Following the proof of Lemma 3 in \cite{boumal2016global}, we know that $\Rscr(t)$ satisfies the following properties. 
\begin{proposition}\label{proposition:retraction}
There exist constants $L_1 \geq 1$, $L_2 > 0$ such that
\begin{align}
\| \rblack{\Rscr(t)} - \rblack{\Rscr(0)}\|_{\Fsf} \leq L_1 t \|\rblack{\Rscr'(0)}\|_{\Fsf}, \label{equ:retraction:b1}\\
\|\rblack{\Rscr(t)} - \rblack{\Rscr(0)} - t \rblack{\Rscr'(0)}\|_{\Fsf} \leq L_2 t^2 \|\rblack{\Rscr'(0)}\|_{\Fsf}^2, \label{equ:retraction:b2}
\end{align}
hold for any $t \geq 0$. 
\end{proposition}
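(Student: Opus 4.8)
The plan is to obtain both bounds from one uniform second-order Taylor estimate of the retraction near the zero section, patched to the boundedness of $\orth$ so as to cover large $t$. Throughout I would write $V = t\Rscr'(0)\in\Tbf_X\orth$, so that $\|V\|_{\Fsf}=t\|\Rscr'(0)\|_{\Fsf}$, and regard $\Rscr(X,\cdot)$ as a smooth map into $\Rbb^{d\times r}$ via the ambient embedding. The first reduction is to note that it suffices to control $\phi(X,V):=\Rscr(X,V)-X-V$: indeed \eqref{equ:retraction:b2} is exactly a bound on $\|\phi(X,V)\|_{\Fsf}$, and \eqref{equ:retraction:b1} then follows from $\|\Rscr(X,V)-X\|_{\Fsf}\le\|V\|_{\Fsf}+\|\phi(X,V)\|_{\Fsf}$.

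For the regime $\|V\|_{\Fsf}\le 1$ I would use the retraction axioms $\Rscr(X,0)=X$ and $\Rscr'(0)=E$ (the latter meaning $\mathrm{D}_V\Rscr(X,0)=\mathrm{id}_{\Tbf_X\orth}$), which give $\phi(X,0)=0$ and $\tfrac{\mathrm{d}}{\mathrm{d}s}\phi(X,sE)\big|_{s=0}=0$. The integral form of Taylor's theorem applied to $s\mapsto\phi(X,sE)$ then yields $\phi(X,tE)=\int_0^t(t-s)\,\mathrm{D}_V^2\Rscr(X,sE)[E,E]\,\mathrm{d}s$ (this path lies in the domain of the retraction since $T_{\Rscr}=+\infty$ for the retractions under consideration). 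Since $\orth$ is compact, the set $\{(X,W):X\in\orth,\ W\in\Tbf_X\orth,\ \|W\|_{\Fsf}\le 1\}$ is compact, so the continuous map $(X,W)\mapsto\mathrm{D}_V^2\Rscr(X,W)$ is bounded there, say by $M$ in operator norm; as $\|sE\|_{\Fsf}\le\|V\|_{\Fsf}\le 1$ along the path, this gives $\|\phi(X,tE)\|_{\Fsf}\le\tfrac M2\|V\|_{\Fsf}^2$, hence $\|\Rscr(X,V)-X\|_{\Fsf}\le(1+\tfrac M2)\|V\|_{\Fsf}$.

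For $\|V\|_{\Fsf}>1$ I would simply invoke the finite Frobenius diameter $D:=\sup_{X,Y\in\orth}\|X-Y\|_{\Fsf}$ of the compact set $\orth$, getting $\|\Rscr(X,V)-X\|_{\Fsf}\le D\le D\|V\|_{\Fsf}$ and $\|\phi(X,V)\|_{\Fsf}\le D+\|V\|_{\Fsf}\le(D+1)\|V\|_{\Fsf}^2$. Taking $L_1:=\max\{1+\tfrac M2,D\}\ge 1$ and $L_2:=\max\{\tfrac M2,D+1\}$ and substituting $\|V\|_{\Fsf}=t\|\Rscr'(0)\|_{\Fsf}$ finishes the argument. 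The step I expect to require the most care is the \emph{uniformity in the base point} of the constant $M$, i.e. that the second-order Taylor remainder of $\Rscr(X,\cdot)$ is bounded independently of $X\in\orth$; this is precisely where compactness of the Stiefel manifold (and hence of its unit-norm tangent subbundle) together with smoothness of $\Rscr$ is used. Everything else is bookkeeping, and passing from small $t$ to all $t\ge 0$ is immediate once the diameter bound is available. This is the specialization to $\orth$ of the argument behind Lemma 3 in \cite{boumal2016global}.
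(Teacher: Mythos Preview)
Your proposal is correct and is precisely the approach the paper invokes: the paper does not give its own proof of this proposition but simply states that it ``follow[s] the proof of Lemma 3 in \cite{boumal2016global},'' which is exactly the compactness-plus-Taylor argument you have written out (and which you explicitly identify at the end). The only additional content the paper provides is explicit computations of $L_1,L_2$ for the polar and QR retractions in the appendix, but those are not part of the proof of the general proposition.
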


We have the following remarks regarding Proposition \ref{proposition:retraction}. 
For the particular case $\Mcal = \Rbb^{n\times r}$ and $\rblack{\Rscr(t)} = X + tE$, we always have $L_1 = 1$ and $L_2 = 0$, for given $X,E\in\Rbb^{n\times r}$. For retractions on $\orth$, we can compute $L_1$ and $L_2$ explicitly. For example, when $\rblack{\Rscr(t)}$ is polar decomposition, we have $L_1=1$, $L_2=1/2$, and when $\rblack{\Rscr(t)}$ is QR factorization, we have $L_1=1 + \sqrt{2}/{2}$, $L_2=\sqrt{10}/{2}$. Note that these estimations of $L_1$ and $L_2$ are much better than those in \cite{dai2016conjugate, weijie2016quadratic}. The corresponding proofs are given in Appendix \ref{section:L1:L2}.

Second, for any $Z \in \Rbb^{d \times r}$, there holds that
 \begin{align}
  \llangle Z, \rblack{\Rscr(t)} - \rblack{\Rscr(0)} \rrangle ={} &  t \llangle Z, \rblack{\Rscr'(0)} \rrangle + \llangle Z, \rblack{\Rscr(t)} - \rblack{\Rscr(0)} - t\rblack{\Rscr'(0)} \rrangle \nn \\
  \leq{}& t \llangle Z, \rblack{\Rscr'(0)} \rrangle +  \|Z\|_{\Fsf} \|\rblack{\Rscr(t)} - \rblack{\Rscr(0)} - t\rblack{\Rscr'(0)} \|_{\Fsf}  \nn \\
  \leq{}& t \llangle Z, \rblack{\Rscr'(0)} \rrangle + L_2 t^2 \|Z\|_{\Fsf} \|\rblack{\Rscr'(0)}\|_{\Fsf}^2,\label{equ:ZYtY0:00}
 \end{align}
where the second inequality is due to \eqref{equ:retraction:b2}. Inequality \eqref{equ:ZYtY0:00} will be used later in our analysis.

We are now ready to present the sufficient decrease property, whose proof can be found in Lemma 3 of \cite{boumal2016global}. For completeness, we give a simple proof here. 
\begin{lemma}\label{lemma:pullback}
For any $t \geq 0,$ there holds that
\begin{align} \label{equ:sufficientDecrease:fYt}
f(\rblack{\Rscr(t)})\leq f(X)  + t \langle \rgrad\, f(X), \rblack{\Rscr'(0)} \rangle_X + \frac{\hat L}{2} t^2 \|\rblack{\Rscr'(0)}\|_{\Fsf}^2,
\end{align}
where $\hat L = 2 L_2 C +  L_1^2 L.$
\end{lemma}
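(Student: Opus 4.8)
The plan is to transplant the ordinary Euclidean descent inequality for $f$ onto the retraction curve $\Rscr(t)$ and then to control the gap between $\Rscr(t)$ and its first-order model $\Rscr(0)+t\Rscr'(0)$ using the retraction estimates of Proposition~\ref{proposition:retraction}, the uniform bound $\|\nabla f(X)\|_{\Fsf}\le C$ on $\orth$, and the already-derived inequality \eqref{equ:ZYtY0:00}. Everything beyond that is a matter of collecting constants.

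Concretely, I would proceed in four short steps. (1) Since $X=\Rscr(0)$ and $\Rscr(t)$ both lie in $\orth$ and $\nabla f$ is $L$-Lipschitz, the descent lemma gives $f(\Rscr(t))\le f(X)+\llangle\nabla f(X),\Rscr(t)-\Rscr(0)\rrangle+\frac{L}{2}\|\Rscr(t)-\Rscr(0)\|_{\Fsf}^2$. (2) Bound the cross term via \eqref{equ:ZYtY0:00} with $Z=\nabla f(X)$, together with $\|\nabla f(X)\|_{\Fsf}\le C$, to get $\llangle\nabla f(X),\Rscr(t)-\Rscr(0)\rrangle\le t\,\llangle\nabla f(X),\Rscr'(0)\rrangle+L_2 C\,t^2\|\Rscr'(0)\|_{\Fsf}^2$. (3) Because $\Rscr'(0)\in\Tbf_X\orth$ by \eqref{equ:Yt}, the defining relation \eqref{equ:gradf} of the Riemannian gradient converts the leading term into $t\,\langle\rgrad f(X),\Rscr'(0)\rangle_X$. (4) Bound the quadratic term with \eqref{equ:retraction:b1}, namely $\|\Rscr(t)-\Rscr(0)\|_{\Fsf}^2\le L_1^2 t^2\|\Rscr'(0)\|_{\Fsf}^2$. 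Summing the three estimates produces exactly the right-hand side of \eqref{equ:sufficientDecrease:fYt}, with second-order coefficient $L_2 C+\frac{L_1^2 L}{2}=\frac{\hat L}{2}$.

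The only step that deserves care is (1): the Euclidean descent lemma requires $L$-Lipschitzness of $\nabla f$ along the chord joining $X$ to $\Rscr(t)$, and that chord need not lie in the non-convex set $\orth$. I would dispatch this by noting that, $\orth$ being compact, $f$ may be taken (enlarging $L$ by at most an absolute constant if necessary) to have an $L$-Lipschitz gradient on a convex neighbourhood of $\orth$; equivalently, one can avoid chords entirely by writing $f(\Rscr(t))-f(X)=\int_0^1\llangle\nabla f\big(X+s(\Rscr(t)-X)\big),\Rscr(t)-X\rrangle\,\drm s$ and estimating the integrand with the Lipschitz bound. With that understood, the rest is routine bookkeeping; note in particular that the hypothesis $L_1\ge 1$ from Proposition~\ref{proposition:retraction} is not needed for this lemma.
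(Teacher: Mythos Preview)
Your proposal is correct and follows essentially the same route as the paper's own proof: apply the Euclidean descent lemma to get \eqref{equ:lemma:pullback:a0}, invoke \eqref{equ:ZYtY0:00} with $Z=\nabla f(X)$ together with $\|\nabla f(X)\|_{\Fsf}\le C$, convert the linear term via \eqref{equ:gradf}, and bound the quadratic term by \eqref{equ:retraction:b1}. Your cautionary remark about the chord possibly leaving $\orth$ is well taken; the paper applies the descent inequality without comment on this point, so your argument is in fact slightly more careful than the original.
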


\begin{proof}
From \eqref{equ:g:lips}, we know that $\nabla f(X)$ is $L$-Lipschitz continuous. Using $\rblack{\Rscr(0)} = X$, we have
\be \label{equ:lemma:pullback:a0}
f(\rblack{\Rscr(t)}) \leq f(X) + \langle \nabla f(X), \rblack{\Rscr(t)} - \rblack{\Rscr(0)}  \rangle  + \frac{L}{2} \|\rblack{\Rscr(t)}  - \rblack{\Rscr(0)} \|_{\Fsf}^2.
\ee
It follows from \eqref{equ:gradf} that $\langle \nabla f(X), \rblack{\Rscr'(0)} \rangle = \langle \rgrad\, f(X), \rblack{\Rscr'(0)} \rangle_X$, which together with \eqref{equ:ZYtY0:00} and \eqref{equ:retraction:b1} implies \eqref{equ:sufficientDecrease:fYt}.
\end{proof}

It is shown in Lemma 3 of \cite{boumal2016global} that if $\Mcal$ is a compact Riemannian submanifold of some Euclidean space and $f$ has Lipschitz continuous gradient in the convex hull of $\Mcal$, then \eqref{equ:sufficientDecrease:fYt} must hold because  \eqref{equ:retraction:b1} and \eqref{equ:retraction:b2} hold. However, if  $\Mcal$ is not compact, it remains unknown whether \eqref{equ:sufficientDecrease:fYt} holds for some universal $T_{\Rscr}$ and $t \in [0, T_{\Rscr}]$.

\subsection{A S-SVRG method} \label{subsection:usvrg}
Our S-SVRG method is described in Algorithm \ref{alg:usvrg}. The random event in Line 6 of Algorithm \ref{alg:usvrg} is denoted by $\xi_{s,k}$. Clearly, $\xi_{s,k}$ is mutually independent of each other. \rblack{For fixed $s$, we simply denote   $X^{s,k}$,  $\xi_{s,k}$, $\tau_{s}$ and $\Dcal(X^{s,k}, \xi_{s,k})$ respectively by $X^k$, $\xi_k$, $\tau$ and $\Dcal(X^k, \xi_k)$.}  With the stochastic {\it Euclidean} gradient $\Gcal(X^{k}, \xi_{k}) $ in hand, we can compute the stochastic {\it Riemannian} gradient $\Dcal(X^k, \xi_k)$ such that
\be \label{equ:Dcalrho:relation}
\langle \Dcal(X^k, \xi_k), E \rangle_{X^k}  =  \langle \Gcal(X^k, \xi_k), E \rangle, \quad \forall  E \in \Tbf_{X^k} \orth,
\ee
which gives $\Dcal(X^{k}, \xi_{k}) = \Dbf_{\rho}\left(X^{k}, \Gcal(X^{k}, \xi_{k})\right)$, where the operator $\Dbf_{\rho}(\cdot, \cdot)$ is defined in \eqref{equ:Drho:X:Y}.

\begin{algorithm}[!hbtp]
     \caption{A S-SVRG method for problem \eqref{prob:manifold}}\label{alg:usvrg}
     \begin{algorithmic}[1]
     \STATE{Given $X^{0,0}\in \orth$, \rblack{the retraction type $\Rscr(\cdot,\cdot)$,} 
     the direction parameter $\rho \geq 0$ in $\Dbf_{\rho}$.}
\STATE{Choose the maximal inner iteration number $K \geq 1$ and the mini-batch size $|\Bsf| \geq 1$.}
 \FOR{$s = 0, \ldots, S-1$}
\STATE Compute the full {\it Euclidean} gradient $\nabla f(X^{s,0})$ and set the \rblack{step size} $\tau_s > 0$. 
\FOR{$k = 0, \ldots, K-1$}
\STATE Generate a uniformly random sample $\Bsf \subseteq  \{1, \ldots, n\}$ with replacement.
\STATE Compute the stochastic {\it Euclidean} gradient $\Gcal(X^{s,k}, \xi_{s,k})$ as 
\be \label{equ:sum:procedure:SFO}
\Gcal(X^{s,k}, \xi_{s,k}) = \nabla f(X^{s,0}) + \frac{1}{|\Bsf|} \sum_{i \in \Bsf} \big( \nabla f_i(X^{s,k}) -  \nabla f_i(X^{s,0})\big).
\ee
\STATE Compute the stochastic {\it Riemannian} gradient  as \footnotemark
\be \label{equ:sRG}
\Dcal(X^{s,k}, \xi_{s,k}) = \Dbf_{\rho}\big(X^{s,k}, \Gcal(X^{s,k}, \xi_{s,k})\big).
\ee
 \STATE Update $X^{s,k+1}$ \rblack{along the direction $-\Dcal(X^{s,k}, \xi_{s,k})$, i.e.,}
 \be \label{equ:Xk:update:usvrg}
X^{s, k+1} = \Rscr\big({X^{s,k}}, -\tau_{s} \Dcal(X^{s,k}, \xi_{s,k})\big).
\ee
 \ENDFOR
\STATE{Set  $X^{s+1,0} \coloneqq X^{s,K}$. Set $X_r^s$ to be  $X^{s,k}$ with probability $p_{s,k}, k = 0, \ldots, K$.}
\ENDFOR
\RETURN{$X_r$ uniformly from $\{X_r^{s}\}$ with $s \in \{0,1, \ldots, S-1\}$.}
   \end{algorithmic}
  \end{algorithm}
\footnotetext{For some special retractions, such as the gradient projection and gradient reflection retractions on $\orth$, it is not necessary to compute $\Dcal(X^{s,k}, \xi_{s,k})$ explicitly.}

We now show that the stochastic Riemannian gradient $\Dcal(X^k, \xi_k)$ is unbiased and its variance can be well controlled.

\begin{lemma}\label{lemma:usvrg:unbiased:bound_varaince}
For the sequences generated by  Algorithm \ref{alg:usvrg}, it holds that
\begin{align}
&\Ebb_{\xi_{k}} \big[ \Dcal(X^k, \xi_k) - \rgrad\, f(X^k) \big] =   0,  \label{equ:svrg:Dcal:unbiased} \\
&\Ebb_{\xi_k} \big[ \| \Dcal(X^k, \xi_k) - \rgrad\, f(X^k) \|_{\Fsf}^2 \big] \leq \frac{L^2}{\nu^2 |\Bsf|} \|X^k - X^0\|_{\Fsf}^2, \label{equ:svrg:Dcal:variance}
\end{align}
\rblack{where the constant $\nu$, defined in \eqref{equ:gradf:norm}, equals to $\min\left\{1, 1/(4\rho)\right\}$.}
\end{lemma}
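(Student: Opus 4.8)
The plan is to establish the two assertions separately, in both cases leveraging the fact that the map $Z \mapsto \Dbf_{\rho}(X^k, Z)$ is linear together with the variational characterizations \eqref{equ:Dcalrho:relation} and \eqref{equ:gradf}. Throughout, I condition on the history up to inner iteration $k$, so that $X^k$ (and $X^0 = X^{s,0}$) are deterministic and the only randomness is the mini-batch $\xi_k = \Bsf$, which consists of $|\Bsf|$ i.i.d.\ indices uniform on $\{1,\dots,n\}$ (sampling with replacement).

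For the unbiasedness \eqref{equ:svrg:Dcal:unbiased}: since each $i\in\Bsf$ is uniform on $\{1,\dots,n\}$, we have $\Ebb[\nabla f_i(X^k)] = \nabla f(X^k)$ and $\Ebb[\nabla f_i(X^0)] = \nabla f(X^0)$, so taking the expectation of \eqref{equ:sum:procedure:SFO} gives $\Ebb_{\xi_k}[\Gcal(X^k,\xi_k)] = \nabla f(X^k)$. Applying $\Dbf_{\rho}(X^k,\cdot)$ and using its linearity, $\Ebb_{\xi_k}[\Dcal(X^k,\xi_k)] = \Dbf_{\rho}(X^k,\Ebb_{\xi_k}[\Gcal(X^k,\xi_k)]) = \Dbf_{\rho}(X^k,\nabla f(X^k)) = \rgrad\, f(X^k)$, which is \eqref{equ:svrg:Dcal:unbiased}.

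For the variance bound \eqref{equ:svrg:Dcal:variance} I would proceed in two stages. First, reduce the Riemannian error to the Euclidean error: by linearity, $\Dcal(X^k,\xi_k) - \rgrad\, f(X^k) = \Dbf_{\rho}(X^k, \Gcal(X^k,\xi_k) - \nabla f(X^k)) \in \Tbf_{X^k}\orth$, and subtracting \eqref{equ:gradf} from \eqref{equ:Dcalrho:relation} gives $\langle \Dcal(X^k,\xi_k) - \rgrad\, f(X^k), E\rangle_{X^k} = \llangle \Gcal(X^k,\xi_k) - \nabla f(X^k), E\rrangle$ for every $E\in\Tbf_{X^k}\orth$. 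Taking $E = \Dcal(X^k,\xi_k) - \rgrad\, f(X^k)$, bounding the left side from below by $\nu\,\|\cdot\|_{\Fsf}^2$ via \eqref{equ:gradf:norm} with $\nu = \min\{1,1/(4\rho)\}$ and the right side by Cauchy--Schwarz, one obtains $\|\Dcal(X^k,\xi_k) - \rgrad\, f(X^k)\|_{\Fsf} \leq \nu^{-1}\|\Gcal(X^k,\xi_k) - \nabla f(X^k)\|_{\Fsf}$. Second, bound the Euclidean variance: writing $g_i = \nabla f_i(X^k) - \nabla f_i(X^0)$ and $\bar g = \tfrac1n\sum_{j=1}^n g_j = \nabla f(X^k)-\nabla f(X^0)$, we have $\Gcal(X^k,\xi_k) - \nabla f(X^k) = \tfrac{1}{|\Bsf|}\sum_{i\in\Bsf}(g_i - \bar g)$, a sum of $|\Bsf|$ i.i.d.\ zero-mean terms; independence kills the cross terms, so $\Ebb_{\xi_k}\|\Gcal(X^k,\xi_k) - \nabla f(X^k)\|_{\Fsf}^2 = \tfrac{1}{|\Bsf|}\,\Ebb_i\|g_i - \bar g\|_{\Fsf}^2 \leq \tfrac{1}{|\Bsf|}\,\Ebb_i\|g_i\|_{\Fsf}^2 = \tfrac{1}{n|\Bsf|}\sum_{j=1}^n \|g_j\|_{\Fsf}^2$, using $\Ebb\|Y-\Ebb Y\|^2 \leq \Ebb\|Y\|^2$. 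Assumption \ref{assumption_lipschitz} gives $\|g_j\|_{\Fsf}\leq L\|X^k - X^0\|_{\Fsf}$, hence $\Ebb_{\xi_k}\|\Gcal(X^k,\xi_k) - \nabla f(X^k)\|_{\Fsf}^2 \leq \tfrac{L^2}{|\Bsf|}\|X^k - X^0\|_{\Fsf}^2$; combining with the first stage yields \eqref{equ:svrg:Dcal:variance}.

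The only step requiring genuine care is the passage from the Euclidean estimation error to the Riemannian one: the key observation is that $\Dcal(X^k,\xi_k) - \rgrad\, f(X^k)$ is itself a tangent vector at $X^k$, hence an admissible test direction in the variational identities, after which the norm-equivalence constant $\nu$ must be tracked correctly. An alternative argument working directly from the explicit formula \eqref{equ:Drho:X:Y} for $\Dbf_{\rho}$ is possible but more computational. Everything else is a standard i.i.d.\ variance calculation combined with the Lipschitz assumption.
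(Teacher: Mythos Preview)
Your proof is correct and follows essentially the same route as the paper. The only cosmetic differences are that for unbiasedness you invoke the linearity of $\Dbf_{\rho}(X^k,\cdot)$ directly, whereas the paper takes expectations in the variational identity \eqref{equ:Dcalrho:relation} and appeals to the uniqueness of $\rgrad\, f(X^k)$; and for the Euclidean variance you carry out the i.i.d.\ mini-batch calculation explicitly, whereas the paper cites an external mini-batch variance lemma---both yield the same $\tfrac{1}{n|\Bsf|}\sum_j\|\nabla f_j(X^k)-\nabla f_j(X^0)\|_{\Fsf}^2$ bound before applying Lipschitzness.
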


\begin{proof}
It is easy to see that $\Ebb_{\xi_{k}} \big[ \Gcal(X^k, \xi_k) - \nabla f(X^k) \big] = 0$. Taking the expectation over $\xi_k$ on both sides of \eqref{equ:Dcalrho:relation}, we have
\be \label{equ:Dcalrho:relation:expectation}
\langle \Ebb_{\xi_k} [\Dcal(X^k, \xi_k)], E \rangle_{X^k} 
= \langle \nabla f(X^k), E \rangle, \quad \forall E  \in \Tbf_{X^k} \orth,
\ee
which together with \eqref{equ:gradf} and the uniqueness of $\rgrad\, f(X^k)$ implies  \eqref{equ:svrg:Dcal:unbiased}.

We now prove \eqref{equ:svrg:Dcal:variance}.
By \eqref{equ:Dcalrho:relation} and \eqref{equ:gradf}, we have 
\begin{align} \label{equ:proof:lemma:variance:cc0}
\langle \Dcal(X^k, \xi_k) - \rgrad\, f(X^k), E \rangle_{X^k}  =  \langle \Gcal(X^k, \xi_k) - \nabla f(X^k), E \rangle, \quad \forall E \in \Tbf_{X^k} \orth.
\end{align}
Letting $E = \Dcal(X^k, \xi_k) - \rgrad\, f(X^k)$ in \eqref{equ:proof:lemma:variance:cc0}, we have from \eqref{equ:gradf:norm} that
\begin{align}
\nu \|\Dcal(X^k, \xi_k) - \rgrad\, f(X^k)\|_{\Fsf}^2 \leq \|\Gcal(X^k, \xi_k) - \nabla f(X^k)\|_{\Fsf} \cdot \|\Dcal(X^k, \xi_k) - \rgrad\, f(X^k)\|_{\Fsf}, \nn
\end{align}
which yields
\begin{align}\label{equ:proof:lemma:variance:d0}
\|\Dcal(X^k, \xi_k) - \rgrad\, f(X^k)\|_{\Fsf}^2 \leq \frac{1}{\nu^2} \|\Gcal(X^k, \xi_k) - \nabla f(X^k)\|_{\Fsf}^2.
\end{align}
Taking the expectation over $\xi_k$ on both sides of \eqref{equ:proof:lemma:variance:d0} leads to
\be
\Ebb_{\xi_k} \big[\|\Dcal(X^k, \xi_k) - \rgrad\, f(X^k)\|_{\Fsf}^2\big] \leq \frac{1}{\nu^2} \cdot \Ebb_{\xi_k} [\|\Gcal(X^k, \xi_k) - \nabla f(X^k)\|_{\Fsf}^2].\label{equ:D:variance}
\ee
 On the other hand, we have
\be
\Ebb_{\xi_k} \big[ \| \Gcal(X^k, \xi_k) - \nabla f(X^k) \|_{\Fsf}^2 \big] \leq  \frac{1}{n |\Bsf|} \frac{n - |\Bsf|}{n - 1}  \sum_{i = 1}^n \|\nabla f_i(X^k) - \nabla f_i(X^0)\|_{\Fsf}^2  \leq  \frac{L^2}{|\Bsf|}  \|X^k - X^0\|_{\Fsf}^2, \nn
\ee
where the first inequality comes from Lemma 4 in \cite{konevcny2016mini}, and the second one is due to \eqref{equ:g:lips}.
Combining  the above inequality  and \eqref{equ:D:variance},  we have \eqref{equ:svrg:Dcal:variance}.
\end{proof}

The following lemma plays a key role in establishing the iteration complexity. Its proof is relegated to Appendix \ref{subsection:proof:Lemma:recursion}.
\begin{lemma}\label{lemma:recursion}
Consider the sequences $\{\abf_k \geq 0\}, \{\bbf_k \geq 0\}, \{\fbf_k\}$ with $k = 0, \ldots, K$ and $\bbf_0 = 0$. If there exist positive constants $\abf, \bbf \neq 1,\cbf,\dbf$  such that
\begin{align}
\fbf_{k+1}  \leq{} & \fbf_k - \cbf \abf_k + \dbf \bbf_k, \label{equ:lemma:recursion:f:a:b} \\[2pt]
\bbf_{k+1} \leq{} &  \bbf \bbf_k + \abf \abf_k  \label{equ:lemma:recursion:b:a}
\end{align}
hold for $k = 0, \ldots, K- 1$.
Then  we have
\be\label{equ:lemma:recursion:bk:fK}
\fbf_K  \leq \fbf_0 -\sum_{k= 0}^{K-1} \Delta_k \abf_k \quad \mbox{with} \quad \Delta_k =  \cbf - \abf \dbf \rblack{\Gamma (b, K-k)},
\ee
\rblack{where the function $\Gamma(\cdot, \cdot)$ is defined as $\Gamma(z, i) = \frac{(1 + z)^{i-1} - 1}{z}$.}
\end{lemma}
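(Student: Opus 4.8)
The plan is to unroll the recursion \eqref{equ:lemma:recursion:b:a} for $\bbf_k$ in terms of the $\abf_j$'s, substitute the resulting bound into the telescoped version of \eqref{equ:lemma:recursion:f:a:b}, and then carefully swap the order of summation. First I would observe that, since $\bbf_0 = 0$, iterating \eqref{equ:lemma:recursion:b:a} gives
\be
\bbf_k \leq \abf \sum_{j=0}^{k-1} \bbf^{\,k-1-j} \abf_j, \qquad k = 0, \ldots, K.
\ee
Summing \eqref{equ:lemma:recursion:f:a:b} over $k = 0, \ldots, K-1$ telescopes the $\fbf$-terms and yields
\be
\fbf_K \leq \fbf_0 - \cbf \sum_{k=0}^{K-1} \abf_k + \dbf \sum_{k=0}^{K-1} \bbf_k.
\ee
Plugging in the bound on $\bbf_k$ (using $\bbf_0 = 0$ so the $k=0$ term contributes nothing) and exchanging the two sums, the coefficient of a fixed $\abf_j$ coming from the double sum is $\abf\dbf \sum_{k=j+1}^{K-1} \bbf^{\,k-1-j} = \abf\dbf \sum_{m=0}^{K-2-j} \bbf^{m}$.

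Next I would evaluate that geometric sum. Writing it out, $\sum_{m=0}^{K-2-j} \bbf^m = \frac{\bbf^{\,K-1-j} - 1}{\bbf - 1}$, which is exactly $\Gamma(\bbf - 1, K-j)$ in the notation $\Gamma(z,i) = \frac{(1+z)^{i-1}-1}{z}$ with $z = \bbf - 1$ and $i = K-j$; here the hypothesis $\bbf \neq 1$ guarantees the denominator is nonzero. Collecting terms, the coefficient of $\abf_j$ in the upper bound for $\fbf_K$ is $-\cbf + \abf\dbf\,\Gamma(\bbf-1, K-j) = -\Delta_j$, which gives precisely \eqref{equ:lemma:recursion:bk:fK}. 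A small bookkeeping point to check is the range of indices after the swap: the original double sum runs over $0 \le j < k \le K-1$, so after swapping, $j$ runs from $0$ to $K-2$, but since $\Gamma(\bbf-1, K-(K-1)) = \Gamma(\bbf-1,1) = 0$, the $j = K-1$ term can be harmlessly included, making the final formula uniform over $k = 0, \ldots, K-1$.

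I don't anticipate a serious obstacle here — the argument is a standard "unroll-and-telescope" manipulation. The one place requiring care is the index arithmetic in the sum exchange and matching it to the precise definition of $\Gamma$ (in particular the off-by-one shifts between $k-1-j$, the summation length $K-2-j$, and the exponent $i-1 = K-1-j$ inside $\Gamma(z,i)$), plus verifying the boundary case $j = K-1$. A secondary subtlety is that $\bbf$ could be either larger or smaller than $1$; the geometric-sum formula $\frac{\bbf^{\,N+1}-1}{\bbf-1}$ is valid for any $\bbf \neq 1$, so no case split is needed, but it is worth noting explicitly.
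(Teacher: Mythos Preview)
Your proposal is correct and follows essentially the same ``unroll--telescope--swap'' argument as the paper's proof in Appendix~\ref{subsection:proof:Lemma:recursion}. You are also right that the geometric sum evaluates to $\Gamma(\bbf-1,K-k)$ rather than $\Gamma(\bbf,K-k)$ as written in the statement; this is a typo in the lemma (consistent with how it is actually applied in \eqref{equ:gamma:k}, where the first argument of $\Gamma$ is $\bbf-1$), and the paper's own proof likewise arrives at $\frac{\bbf^{K-1-k}-1}{\bbf-1}$.
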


\subsection{Iteration complexity of S-SVRG} \label{subsection:complexity}
We first show that the function value over one epoch, i.e., one outer loop,  has sufficient reduction in expectation.
\begin{lemma}\label{lemma:sufficient:decrease:epoch}
Consider Algorithm \ref{alg:usvrg}. For fixed $s$, it holds that
\begin{align} \label{equ:f:x:K:expectation}
\Ebb_{\xi_{[K-1]}}[\mF(X^{K})]  \leq f(X^0) -  \sum_{k=0}^{K-1}\Delta_k \Ebb_{\xi_{[K-1]}}\big[\| \rgrad\,f(X^k) \|_{\Fsf}^2 \big],
\end{align}
where $\xi_{[K-1]} = (\xi_0, \ldots, \xi_{K-1})$ and
\be \label{equ:Delta:k}
\frac{\Delta_k}{\tau} = \nu - \frac{\hat L\tau}{2}  \left[1 + \left(1 + \frac{2}{\tilde{L}\beta\tau}\right)  \frac{\tilde{L}L^2 \tau^2}{\nu^2 |\Bsf|}  \Gamma_k\right]\!,
\ee
where $\beta > 0$ is a constant, $\tilde{L} = L_1^2 + 4L_2 \sqrt{r}$ and
\be \label{equ:gamma:k}
\Gamma_k = \Gamma\left(2 \beta \tau + \frac{\tilde{L} L^2 \tau^2}{\nu^2 |\Bsf|}, K- k\right).
\ee
\end{lemma}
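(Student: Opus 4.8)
The plan is to set up the two recursions required by Lemma~\ref{lemma:recursion} with the identifications $\fbf_k = \Ebb_{\xi_{[K-1]}}[f(X^k)]$, $\abf_k = \Ebb_{\xi_{[K-1]}}[\|\rgrad\,f(X^k)\|_{\Fsf}^2]$, and $\bbf_k = \Ebb_{\xi_{[K-1]}}[\|X^k - X^0\|_{\Fsf}^2]$ (so that $\bbf_0 = 0$ holds automatically), and then read off \eqref{equ:f:x:K:expectation} from the conclusion \eqref{equ:lemma:recursion:bk:fK} of that lemma, matching the resulting $\Delta_k$ against \eqref{equ:Delta:k}. First I would apply the sufficient decrease property \eqref{equ:sufficientDecrease:fYt} of Lemma~\ref{lemma:pullback} to the update \eqref{equ:Xk:update:usvrg}, i.e., with $\Rscr'(0) = -\Dcal(X^k,\xi_k)$, to get
\be
f(X^{k+1}) \leq f(X^k) - \tau \langle \rgrad\,f(X^k), \Dcal(X^k,\xi_k)\rangle_{X^k} + \frac{\hat L \tau^2}{2}\|\Dcal(X^k,\xi_k)\|_{\Fsf}^2. \nn
\ee
Taking $\Ebb_{\xi_k}$ and using the unbiasedness \eqref{equ:svrg:Dcal:unbiased}, the inner-product term becomes $-\tau\|\rgrad\,f(X^k)\|_{X^k}^2 \leq -\nu\tau\|\rgrad\,f(X^k)\|_{\Fsf}^2$ by \eqref{equ:gradf:norm}; the last term I would split as $\|\Dcal\|_{\Fsf}^2 = \|\Dcal - \rgrad\,f(X^k)\|_{\Fsf}^2 + 2\langle \Dcal - \rgrad\,f(X^k), \rgrad\,f(X^k)\rangle + \|\rgrad\,f(X^k)\|_{\Fsf}^2$, whose middle term vanishes in expectation and whose variance term is bounded by \eqref{equ:svrg:Dcal:variance}. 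This yields the first recursion: $\fbf_{k+1} \leq \fbf_k - (\nu\tau - \tfrac{\hat L\tau^2}{2})\abf_k + \tfrac{\hat L\tau^2}{2}\cdot\tfrac{L^2}{\nu^2|\Bsf|}\bbf_k$, so $\cbf = \nu\tau - \tfrac{\hat L\tau^2}{2}$ and $\dbf = \tfrac{\hat L L^2\tau^2}{2\nu^2|\Bsf|}$.

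Next I would establish the second recursion \eqref{equ:lemma:recursion:b:a} for $\bbf_k = \Ebb[\|X^k - X^0\|_{\Fsf}^2]$. Writing $X^{k+1} - X^0 = (X^{k+1} - X^k) + (X^k - X^0)$ and expanding the square, I would use Young's inequality with parameter $\beta\tau$ (to match the $1 + \tfrac{2}{\tilde L\beta\tau}$ structure in \eqref{equ:Delta:k}) to get a bound of the form $\|X^{k+1}-X^0\|_{\Fsf}^2 \leq (1+\beta\tau)\|X^k-X^0\|_{\Fsf}^2 + (1 + \tfrac{1}{\beta\tau})\|X^{k+1}-X^k\|_{\Fsf}^2$; the retraction bound \eqref{equ:retraction:b1} controls $\|X^{k+1}-X^k\|_{\Fsf} = \|\Rscr(\tau)-\Rscr(0)\|_{\Fsf} \leq L_1\tau\|\Dcal(X^k,\xi_k)\|_{\Fsf}$. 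The subtlety here is to also track the $4L_2\sqrt r$ contribution — this is where the constant $\tilde L = L_1^2 + 4L_2\sqrt r$ comes from, presumably via a more careful expansion of $\langle X^k - X^0, X^{k+1}-X^k\rangle$ using \eqref{equ:ZYtY0:00} together with $\|X^k\|_{\Fsf} = \sqrt r$ on $\orth$, rather than the crude Cauchy–Schwarz step. After taking expectations and invoking \eqref{equ:svrg:Dcal:variance} again to convert $\Ebb\|\Dcal\|_{\Fsf}^2 \leq \tfrac{L^2}{\nu^2|\Bsf|}\bbf_k + \abf_k$, I expect to arrive at $\bbf_{k+1} \leq (1 + 2\beta\tau + \tfrac{\tilde L L^2\tau^2}{\nu^2|\Bsf|})\bbf_k + (\text{something}\cdot\tilde L\tau^2)\abf_k$, giving $\bbf = 2\beta\tau + \tfrac{\tilde L L^2\tau^2}{\nu^2|\Bsf|}$ (matching \eqref{equ:gamma:k}) and $\abf$ proportional to $\tilde L\tau^2$.

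Finally, with these constants in hand, Lemma~\ref{lemma:recursion} gives $\fbf_K \leq \fbf_0 - \sum_{k=0}^{K-1}\Delta_k\abf_k$ with $\Delta_k = \cbf - \abf\dbf\,\Gamma(\bbf, K-k)$, and I would simply verify algebraically that substituting $\cbf = \nu\tau - \tfrac{\hat L\tau^2}{2}$, $\dbf = \tfrac{\hat L L^2\tau^2}{2\nu^2|\Bsf|}$, $\abf = \tilde L\tau^2/(\text{const})$, and $\bbf = 2\beta\tau + \tfrac{\tilde L L^2\tau^2}{\nu^2|\Bsf|}$ collapses into the claimed formula \eqref{equ:Delta:k}, where the $\tfrac{1}{\tilde L\beta\tau}$ term inside the bracket accounts for the $(1+\tfrac{1}{\beta\tau})$-type factor from Young's inequality. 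The main obstacle I anticipate is the second recursion: getting the precise constant $\tilde L = L_1^2 + 4L_2\sqrt r$ rather than a looser bound requires exploiting \eqref{equ:ZYtY0:00} (the refined inequality $\langle Z, \Rscr(t)-\Rscr(0)\rangle \leq t\langle Z,\Rscr'(0)\rangle + L_2 t^2\|Z\|_{\Fsf}\|\Rscr'(0)\|_{\Fsf}^2$) with $Z = X^k - X^0$ and bounding $\|X^k - X^0\|_{\Fsf} \leq 2\sqrt r$, and carefully bookkeeping all the $\tau$-order terms so that the cross terms combine exactly into the stated $\tilde L$; everything else is a routine expectation-and-telescope argument.
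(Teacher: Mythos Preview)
Your proposal is correct and follows essentially the same approach as the paper. The only clarification needed is in the second recursion: rather than applying Young's inequality to the full split $\|X^{k+1}-X^0\|_{\Fsf}^2 \le (1+\beta\tau)\|X^k-X^0\|_{\Fsf}^2 + (1+\tfrac{1}{\beta\tau})\|X^{k+1}-X^k\|_{\Fsf}^2$, the paper does the exact expansion, handles the cross term via \eqref{equ:ZYtY0:00} (exactly as you anticipated, with $\|X^k - X^0\|_{\Fsf}\le 2\sqrt r$) together with \eqref{equ:retraction:b1} to obtain $\|X^{k+1}-X^0\|_{\Fsf}^2 \le \|X^k-X^0\|_{\Fsf}^2 + \tilde L\tau^2\|\Dcal\|_{\Fsf}^2 - 2\tau\langle X^k - X^0, \Dcal\rangle$, takes expectation, and only then applies Young with parameter $\beta$ to $-\langle X^k-X^0, \rgrad f(X^k)\rangle$; this gives $\abf = \tilde L\tau^2 + 2\tau/\beta$ exactly, and the verification of \eqref{equ:Delta:k} is then immediate.
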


\begin{proof}
The proof consists of three steps.

First, we establish the sufficient reduction of the function value per inner iteration. By \eqref{equ:svrg:Dcal:unbiased}, we have
\be \label{equ:variance:law}
  \Ebb_{\xi_k} \big[\|\Dcal(X^k, \xi_k)\|_{\Fsf}^2\big] = \Ebb_{\xi_k}\big[\|\Dcal(X^k, \xi_k) - \rgrad\, f(X^k) \|_{\Fsf}^2 \big]  + \|\rgrad\, f(X^k)\|_{\Fsf}^2.
\ee
Letting $\rblack{\Rscr'(0)} = - \Dcal(X^k, \xi_k)$ in \eqref{equ:sufficientDecrease:fYt}, and taking the expectation over $\xi_k$ on both sides of the resulting inequality, we have from \rblack{$X^k = \Rscr(X^k, 0)$, \eqref{equ:Xk:update:usvrg} and} \eqref{equ:variance:law} that
\begin{align} \label{proof:theorem:suffcientDecrease:a1}
  \Ebb_{\xi_k} [\mF(X^{k+1})]
  \leq {}&\mF(X^k)  - \tau \big \langle  \rgrad\, f(X^k), \rgrad\, f(X^k) \big \rangle_{X^k}  + \frac{1}{2} \hat L \tau^2\|\rgrad\, f(X^k)\|_{\Fsf}^2 \nn \\
  & +\frac{1}{2} \hat L \tau^2  \Ebb_{\xi_k} \big[ \|\Dcal(X^k, \xi_k) - \rgrad\, f(X^k)\|_{\Fsf}^2 \big].
\end{align}
Plugging \eqref{equ:svrg:Dcal:variance} into \eqref{proof:theorem:suffcientDecrease:a1} and using \eqref{equ:gradf:norm} with $E = \rgrad\, f(X^k)$, we obtain
\begin{align}
  \Ebb_{\xi_k}[\mF(X^{k+1})]  \leq{}&\mF(X^k) - \frac{\tau}{2}  \left(2 \nu - \hat L  \tau \right) \| \rgrad\, f(X^k) \|_{\Fsf}^2   + \frac{ \hat L L^2\tau^2}{2\nu^2 |\Bsf|}  \|X^k -  X^0\|_{\Fsf}^2. \label{lemma:fX:descent:rsvrg:a1}
\end{align}

Second, we prove the following inequality: 
\begin{align}
{}&\Ebb_{\xi_k} \big[\|X^{k+1} - X^0\|_{\Fsf}^2\big] \nn \\
\leq{} &  \left(1 + 2 \beta \tau + \frac{\tilde{L} L^2  }{\nu^2 |\Bsf|} \tau^2  \right) \|X^k - X^0\|_{\Fsf}^2  +  \tau \left(\tilde{L}\tau + \frac{2}{\beta}\right)  \|\rgrad\, f(X^k)\|_{\Fsf}^2. \label{equ:theorem:xk:recursion}
\end{align}
To prove this, we first obtain from $X^k = \Rscr(X^k, 0)$ and \eqref{equ:Xk:update:usvrg} that
\begin{align}
\|X^{k+1} - X^0\|_{\Fsf}^2 
={} & \|X^k - X^0\|_{\Fsf}^2 + \|\Rscr\big(X^k, -\tau \Dcal(X^k, \xi_k)\big) - \Rscr(X^k, 0)\|_{\Fsf}^2  \nn \\
& +  2\big\langle X^k  - X^0, \Rscr(X^k, -\tau \Dcal(X^k, \xi_k)) - \Rscr(X^k, 0)  \big \rangle.   \label{equ:theorem:xk:recursion:00000}
\end{align} 
It follows from \eqref{equ:ZYtY0:00} \rblack{and $\Rscr'(0) = - \Dcal(X^k, \xi_k)$} that
\begin{align}
& \big\langle X^k  - X^0, \Rscr(X^k, -\tau \Dcal(X^k, \xi_k)) - \Rscr(X^k, 0)  \big\rangle \nn \\ 
\leq{} &\rblack{-\tau} \big\langle X^k  - X^0, \Dcal(X^k, \xi_k) \big\rangle  + L_2  \tau^2 \|X^k - X^0\|_{\Fsf} \|\Dcal(X^k, \xi_k)\|_{\Fsf}^2,\nn
\end{align}
which, together with the fact that $\|X^k - X^0\|_{\Fsf} \leq 2\sqrt{r}$, \eqref{equ:theorem:xk:recursion:00000} and the definition of $\tilde{L}$ implies
\begin{align}
      \|X^{k+1} - X^0\|_{\Fsf}^2 \leq \|X^k - X^0\|_{\Fsf}^2  +  \tilde{L} \tau^2  \|\Dcal(X^k, \xi_k)\|_{\Fsf}^2  \ \rblack{-}\   2 \tau \big\langle X^k  - X^0, \Dcal(X^k,\xi_k) \big\rangle. \label{equ:lemma:Xk:update:a1}
\end{align}
Taking expectation over both sides of \eqref{equ:lemma:Xk:update:a1} with respect to $\xi_k$, and using \eqref{equ:svrg:Dcal:unbiased}, we have
\begin{align}\label{equ:lemma:xk:recursion:cc4}
 \Ebb_{\xi_k}  \big[\|X^{k+1} - X^0\|_{\Fsf}^2\big] \leq \|X^k - X^0\|_{\Fsf}^2 +  \tilde{L} \tau^2   \Ebb_{\xi_k} \big[\|\Dcal(X^k, \xi_k)\|_{\Fsf}^2\big] \ \rblack{-}\   2 \tau \big\langle X^k  - X^0, \rgrad\,f(X^k) \big\rangle.
\end{align}
Combining  \eqref{equ:variance:law} and \eqref{equ:svrg:Dcal:variance}, we have
\be\label{equ:lemma:xk:recursion:cc5}
\Ebb_{\xi_k} \big[\|\Dcal(X^k, \xi_k)\|_{\Fsf}^2\big] \leq \frac{L^2}{\nu^2 |\Bsf|} \|X^k - X^0\|_{\Fsf}^2  + \|\rgrad\,f(X^k)\|_{\Fsf}^2.
\ee
On the other hand, it follows from the  Cauchy-Schwarz inequality that
\be\label{equ:lemma:xk:recursion:cc6}
\rblack{-}\ \big\langle X^k  - X^0, \Dcal(X^k,\xi_k) \big\rangle \leq \beta  \|X^k - X^0\|_{\Fsf}^2  + \frac{1}{\beta}  \|\rgrad\,f(X^k)\|_{\Fsf}^2.
\ee 
Plugging \eqref{equ:lemma:xk:recursion:cc5} and \eqref{equ:lemma:xk:recursion:cc6} into \eqref{equ:lemma:xk:recursion:cc4}, we obtain \eqref{equ:theorem:xk:recursion}.

Finally, using \eqref{lemma:fX:descent:rsvrg:a1}, \eqref{equ:theorem:xk:recursion} and the fact that $\xi_k$ is independent of each other, considering the sequences $\{\abf_k\}, \{\bbf_k\}, \{\fbf_k\}$ in  Lemma \ref{lemma:recursion}  as $\{\Ebb_{\xi_{[K-1]}}\big[\| \rgrad\,f(X^k) \|_{\Fsf}^2 \big]\}$, $\{ \Ebb_{\xi_{[K-1]}}  \big[\|X^{k} - X^0\|_{\Fsf}^2\big] \}$ and $\{ \Ebb_{\xi_{[K-1]}}[\mF(X^{k})]\}$, respectively, we finally establish \eqref{equ:f:x:K:expectation}. 
\end{proof}



For fixed $s$, \eqref{equ:Delta:k} implies that
$\frac{\tau}{2} \big(2 \nu - \hat L  \tau \big) = \Delta_{K-1} >  \cdots > \Delta_{0}$. Thus
\[
\Delta_{\min} \coloneqq \min\limits_{s,k} \Delta_{s,k} = \min\limits_{s} \Delta_{s,0}.
\]
We now bound the variance of $\rgrad\, f(X_r)$ in expectation.
\begin{lemma} \label{theorem:svrg:grad:norm}
Consider Algorithm \ref{alg:usvrg}. Suppose that $\Delta_{\min} > 0$. Let $p_{s,k} = \frac{\Delta_{s,k}}{\sum_{k=0}^{K-1} \Delta_{s,k}},$ $k = 0, \ldots, K-1$ and $p_{s,K} = 0$. We have
\begin{align}\label{equ:theorem:svrg:grad:norm:00}
  \Ebb[\|\rgrad\, f(X_r)\|^2] \leq \frac{1}{SK\Delta_{\min}} \left( f(X^{0,0}) - f(X^*)\right)\!,
\end{align}
where $X^*$ is the optimal solution of problem \eqref{prob:manifold}, and the expectation is taken over $\xi_{[S-1], [K-1]} = (\xi_{0, [K-1]}, \ldots, \xi_{S-1, [K-1]})$ and the randomness of $r$.
\end{lemma}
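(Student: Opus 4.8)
The plan is to telescope the per-epoch decrease in Lemma~\ref{lemma:sufficient:decrease:epoch} across the outer loop, then convert the resulting bound on a weighted sum of expected Riemannian gradient norms into a bound on $\Ebb[\|\rgrad\,f(X_r)\|_{\Fsf}^2]$ via the sampling rule for $X_r^s$ and $X_r$. Concretely, I would first restore the $s$-indices in \eqref{equ:f:x:K:expectation}: for each fixed $s$, conditioning on everything generated before epoch $s$ (so that $X^{s,0}$ is determined) and taking the conditional expectation over $\xi_{s,[K-1]}$ gives
\[
\Ebb_{\xi_{s,[K-1]}}\big[f(X^{s,K})\big]\ \le\ f(X^{s,0})\ -\ \sum_{k=0}^{K-1}\Delta_{s,k}\,\Ebb_{\xi_{s,[K-1]}}\big[\|\rgrad\,f(X^{s,k})\|_{\Fsf}^2\big].
\]
Using $X^{s+1,0}\coloneqq X^{s,K}$ and applying the tower property over the whole stochasticity $\xi_{[S-1],[K-1]}$, this becomes $\Ebb[f(X^{s+1,0})]\le \Ebb[f(X^{s,0})]-\sum_{k=0}^{K-1}\Delta_{s,k}\,\Ebb[\|\rgrad\,f(X^{s,k})\|_{\Fsf}^2]$.

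Next I would sum this inequality over $s=0,\ldots,S-1$. The left-hand side telescopes, leaving $\Ebb[f(X^{S,0})]$, and since $X^{S,0}\in\orth$ we have $f(X^{S,0})\ge f(X^*)$. This yields
\[
\sum_{s=0}^{S-1}\sum_{k=0}^{K-1}\Delta_{s,k}\,\Ebb\big[\|\rgrad\,f(X^{s,k})\|_{\Fsf}^2\big]\ \le\ f(X^{0,0})-f(X^*).
\]
Because $\Delta_{\min}>0$ is assumed, each $\Delta_{s,k}\ge\Delta_{\min}>0$, so the weights $p_{s,k}=\Delta_{s,k}/\sum_{k=0}^{K-1}\Delta_{s,k}$ (with $p_{s,K}=0$) indeed form a probability distribution, and in particular $\sum_{k=0}^{K-1}\Delta_{s,k}\ge K\Delta_{\min}$.

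Finally, I would unwind the definition of the returned point. Conditioning on the epoch-$s$ iterates and using that $X_r^s=X^{s,k}$ with probability $p_{s,k}$,
\[
\Ebb\big[\|\rgrad\,f(X_r^s)\|_{\Fsf}^2\big]
=\frac{\sum_{k=0}^{K-1}\Delta_{s,k}\,\Ebb\big[\|\rgrad\,f(X^{s,k})\|_{\Fsf}^2\big]}{\sum_{k=0}^{K-1}\Delta_{s,k}}
\ \le\ \frac{1}{K\Delta_{\min}}\sum_{k=0}^{K-1}\Delta_{s,k}\,\Ebb\big[\|\rgrad\,f(X^{s,k})\|_{\Fsf}^2\big].
\]
Since $X_r$ is drawn uniformly from $\{X_r^s\}_{s=0}^{S-1}$ independently of the rest, averaging over $s$ and invoking the telescoped bound gives
\[
\Ebb\big[\|\rgrad\,f(X_r)\|_{\Fsf}^2\big]=\frac1S\sum_{s=0}^{S-1}\Ebb\big[\|\rgrad\,f(X_r^s)\|_{\Fsf}^2\big]
\le\frac{1}{SK\Delta_{\min}}\sum_{s=0}^{S-1}\sum_{k=0}^{K-1}\Delta_{s,k}\,\Ebb\big[\|\rgrad\,f(X^{s,k})\|_{\Fsf}^2\big]\le\frac{f(X^{0,0})-f(X^*)}{SK\Delta_{\min}},
\]
which is \eqref{equ:theorem:svrg:grad:norm:00}. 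I expect the only delicate point to be the bookkeeping of the nested/conditional expectations — making sure the per-epoch inequality of Lemma~\ref{lemma:sufficient:decrease:epoch} chains correctly across epochs via the tower property and that the randomness of $r$ is treated as independent — rather than any substantive estimate; the rest is telescoping and the elementary bound $\sum_k\Delta_{s,k}\ge K\Delta_{\min}$.
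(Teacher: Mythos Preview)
Your proposal is correct and follows essentially the same approach as the paper: apply the per-epoch decrease \eqref{equ:f:x:K:expectation}, use the sampling distribution $p_{s,k}$ together with $\sum_{k=0}^{K-1}\Delta_{s,k}\ge K\Delta_{\min}$ to convert to a bound on $\Ebb[\|\rgrad\,f(X_r^s)\|_{\Fsf}^2]$, average uniformly over $s$, telescope via $X^{s+1,0}=X^{s,K}$, and lower bound the terminal value by $f(X^*)$. The only difference is cosmetic ordering---the paper bounds each $\Ebb[\|\rgrad\,f(X_r^s)\|_{\Fsf}^2]$ first and then telescopes, whereas you telescope the function values first and then convert to the gradient bound---but the ingredients and estimates are identical.
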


\begin{proof}
First, note that $p_{s,K} = 0$, we have
\begin{align}  \label{equ:theorem:svrg:xrs}
\Ebb_{\xi_{s,[K-1]}} [\| \rgrad\, f(X_r^s)\|_{\Fsf}^2]  ={} & \sum_{k=0}^{K-1}  p_{s,k}\, \Ebb_{\xi_{[K-1]}} \big[\| \rgrad\,f(X^{s,k}) \|_{\Fsf}^2 \big]\nn \\
\leq{}&  \frac{f(X^{s,0}) - \Ebb_{\xi_{[K-1]}}[\mF(X^{s, K})]}{\sum_{k=0}^{K-1} \Delta_{s,k}}
\leq{} \frac{f(X^{s,0}) - \Ebb_{\xi_{[K-1]}}[\mF(X^{s, K})]}{K \Delta_{\min}}, 
\end{align}
where the first inequality is due to \eqref{equ:f:x:K:expectation}. 
Further note that
$$\Ebb[\|\rgrad\, f(X_r)\|^2] = \frac{1}{S} \sum_{s = 0}^{S-1}  \Ebb_{\xi_s, [K-1]} [\| \rgrad\, f(X_r^s)\|_{\Fsf}^2],$$ we know from \eqref{equ:theorem:svrg:xrs} and $X^{s+1,0} = X^{s,K}$ that 
\rblack{$\Ebb[\|\rgrad\, f(X_r)\|^2] \leq \frac{1}{SK\Delta_{\min}} \big( f(X^{s,0}) - \Ebb\big[f(X^{S,K})\big] \big),$}
which, together with $\Ebb\big[f(X^{S,K})\big] \geq f(X^*)$ implies \eqref{equ:theorem:svrg:grad:norm:00}. 
\end{proof}

By choosing the parameters $K, |\Bsf|, \beta$ and $\tau_s$ carefully, we are ready to establish the iteration complexity result of S-SVRG.
\begin{theorem} \label{theorem:complexity}
Consider Algorithm \ref{alg:usvrg}. Given constants $ 0 \leq \mu \leq 2/3$ and $\kappa > 0$, we set the parameters as
\be  \label{equ:svrg:K:B:beta:tau}
K = \lceil (\kappa n)^{\frac{1}{3(1 - \mu)}} \rceil, \quad |\Bsf| = \lceil K^{2 - 3\mu}\rceil,  \quad \beta = \frac{\sqrt{\tilde{L}} L}{\nu} K^{\mu - 1},  \quad \tau_s \equiv  \frac{c \nu}{\sqrt{\tilde{L}} L} K^{-\mu},
\ee
where the constant $c \in (0,1)$ satisfies
\be\label{equ:c} 
\frac{\hat L}{\sqrt{\tilde{L}} L} \exp(c^2 + 2c)c\leq 1.
\ee
To achieve a stochastic $\epsilon$-stationary point (defined in Definition \ref{def:stoc-eps-stat-point}) of problem \eqref{prob:manifold}, the IFO-calls and RO(retraction oracle)-calls complexities are \rblack{$O({n^{2/3}}/{\epsilon}+ n)$ and $O({n^{\frac{\mu}{3(1 - \mu)}}}/{\epsilon} + n^{\frac{1}{3(1-\mu)}})$}, respectively.
In particular, when $\mu = 0$, namely, $|\Bsf| = \lceil (\kappa n)^{2/3}\rceil$, the IFO-calls and RO-calls complexities become \rblack{$O({n^{2/3}}/{\epsilon} + n)$ and $O(1/{\epsilon} + n^{1/3})$}, respectively.
\end{theorem}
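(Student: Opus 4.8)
The plan is to feed the parameter choice \eqref{equ:svrg:K:B:beta:tau} into the one-epoch estimate of Lemma~\ref{theorem:svrg:grad:norm}, namely $\Ebb[\|\rgrad f(X_r)\|^2]\le (f(X^{0,0})-f(X^*))/(SK\Delta_{\min})$ from \eqref{equ:theorem:svrg:grad:norm:00}, and then to carry out three steps: (i) bound $\Delta_{\min}$ from below by a positive constant multiple of the step size $\tau_s$, which in particular makes $\Delta_{\min}>0$ so that Lemma~\ref{theorem:svrg:grad:norm} applies; (ii) choose the number of outer iterations $S$ so that the resulting upper bound drops below $\epsilon$; and (iii) convert the value of $S$ into IFO- and RO-call counts via the arithmetic identities between $K$, $|\Bsf|$ and $n$ forced by \eqref{equ:svrg:K:B:beta:tau}. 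Note that $S$ is left free in \eqref{equ:svrg:K:B:beta:tau}; it is precisely the quantity we tune in step (ii).

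The crux is step (i). Since $\Gamma(z,\cdot)$ is nondecreasing, $\Delta_{\min}=\min_s\Delta_{s,0}$ is obtained by setting $k=0$ in \eqref{equ:Delta:k}, where $\Gamma_0=\Gamma(z,K)$ with $z=2\beta\tau_s+\tilde L L^2\tau_s^2/(\nu^2|\Bsf|)$. Substituting $\beta\tau_s=c/K$ and $\tilde L L^2\tau_s^2/(\nu^2|\Bsf|)\le c^2K^{\mu-2}\le c^2/K$ (using $|\Bsf|\ge K^{2-3\mu}$ and $\mu\le 2/3$) gives $zK\le c^2+2c$, and then the elementary bounds $(1+z)^{K-1}\le e^{zK}$ and $1/z\le K/(2c)$ yield $\Gamma_0\le K(e^{c^2+2c}-1)/(2c)$. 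Feeding this and $1+2/(\tilde L\beta\tau_s)=1+2K/(c\tilde L)$ into the bracketed term of \eqref{equ:Delta:k} and cancelling the powers of $K$ — the factor $K^{-\mu}$ inside $\tau_s$ exactly absorbs the $K^{\mu}$ generated by the product $(1+2/(\tilde L\beta\tau_s))\,\Gamma_0$ — one gets
\[
\tfrac{\hat L\tau_s}{2}\big[\,1+(1+\tfrac{2}{\tilde L\beta\tau_s})\tfrac{\tilde L L^2\tau_s^2}{\nu^2|\Bsf|}\Gamma_0\,\big]\;\le\;\tfrac{\hat L c\nu}{2\sqrt{\tilde L}L}\Big(K^{-\mu}+\tfrac{c\,(e^{c^2+2c}-1)}{2K}+\tfrac{e^{c^2+2c}-1}{\tilde L}\Big)\;\le\;\tfrac34\,\nu,
\]
where the last inequality uses $\tilde L\ge L_1^2\ge 1$, $c\in(0,1)$, $K\ge1$ (which bound the parenthesis by $\tfrac32 e^{c^2+2c}$) together with condition \eqref{equ:c}. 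Hence $\Delta_{s,0}/\tau_s\ge\nu/4$, so $\Delta_{\min}\ge\tfrac{\nu}{4}\tau_s=\tfrac{c\nu^2}{4\sqrt{\tilde L}L}K^{-\mu}>0$. I expect this to be the only delicate point: the factor $1+2/(\tilde L\beta\tau_s)$ is of order $K$, which is tolerated solely because it multiplies a variance term of order $K^{\mu-2}$ and a $\Gamma_0$ of order $K$; getting this power-of-$K$ bookkeeping right so that the leftover constant is governed precisely by \eqref{equ:c} is where the care is needed.

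For step (ii), the bound just obtained gives $SK\Delta_{\min}\ge\tfrac{c\nu^2}{4\sqrt{\tilde L}L}\,SK^{1-\mu}$, so taking $S=\max\{1,\lceil 4\sqrt{\tilde L}L\,(f(X^{0,0})-f(X^*))/(c\nu^2\epsilon K^{1-\mu})\rceil\}$ forces $\Ebb[\|\rgrad f(X_r)\|^2]\le\epsilon$, i.e.\ yields a stochastic $\epsilon$-stationary point in the sense of Definition~\ref{def:stoc-eps-stat-point}. For step (iii), $K=\lceil(\kappa n)^{1/(3(1-\mu))}\rceil$ gives $K^{1-\mu}=\Theta((\kappa n)^{1/3})$ and $K|\Bsf|=\Theta(K^{3(1-\mu)})=\Theta(\kappa n)$, hence $S=O(1)+O(n^{-1/3}/\epsilon)$ (absorbing $\sqrt{\tilde L},L,\nu,c,\kappa$ and $f(X^{0,0})-f(X^*)$ into constants), while each outer loop costs $n+O(K|\Bsf|)=O(n)$ component-gradient evaluations and exactly $K$ retractions. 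Multiplying, the total IFO-call count is $S\cdot O(n)=O(n+n^{2/3}/\epsilon)$, and the total RO-call count is $SK=O(K)+O(K^{\mu}/\epsilon)=O(n^{1/(3(1-\mu))}+n^{\mu/(3(1-\mu))}/\epsilon)$, using $K/(\kappa n)^{1/3}=\Theta(K^{\mu})=\Theta(n^{\mu/(3(1-\mu))})$. The case $\mu=0$ is a direct specialization: then $|\Bsf|=\lceil(\kappa n)^{2/3}\rceil$ and $K=\Theta(n^{1/3})$, so the IFO count is again $O(n+n^{2/3}/\epsilon)$ and the RO count becomes $SK=O(n^{1/3}+1/\epsilon)$.
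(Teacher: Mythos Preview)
Your proof is correct and follows essentially the same approach as the paper's: bound $\Gamma_0$ and the variance term in \eqref{equ:Delta:k} using the parameter choices \eqref{equ:svrg:K:B:beta:tau}, deduce $\Delta_{\min}\ge \mathrm{const}\cdot\nu\tau$, then pick $S$ via \eqref{equ:theorem:svrg:grad:norm:00} and count oracle calls. The only cosmetic difference is that the paper uses the slightly sharper bound $\Gamma_0\le K(\exp(c^2+2c)-1)/(c^2+2c)$ together with $1+2/(\tilde L\beta\tau)\le 1+2K/c$, which makes the constants telescope to exactly $\exp(c^2+2c)$ and yields $\Delta_{\min}\ge \nu\tau/2$ instead of your $\nu\tau/4$; this does not affect the big-$O$ conclusions.
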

\begin{proof}
Since all $\tau_s$ are the same, again we drop the subscript $s$ for simplicity.

We first give the lower bound of  $\Delta_{\min}$.
Note that  $\tilde{L} \geq L_1^2 \geq 1$ and $0 \leq \mu \leq 2/3$, with the choices of $\tau$ and $|\Bsf|$ in \eqref{equ:svrg:K:B:beta:tau}, we have
\be \label{equ:usvrg:tau:beta:000}
\frac{\tilde{L} L^2 \tau^2}{\nu^2 |\Bsf|}   \leq c^2 K^{\mu - 2},\
1 + \frac{2}{\tilde{L} \beta \tau}  \leq 1 + \frac{2K}{c}.
\ee
By the first inequality in  \eqref{equ:usvrg:tau:beta:000}
 and the choice of $\beta$ in \eqref{equ:svrg:K:B:beta:tau}, we have from \eqref{equ:gamma:k} that 
\be\label{equ:usvrg:tau:beta:001}
\Gamma_0  \leq  \frac{\exp(c^2 + 2c) - 1}{c^2 + 2c} K.
\ee
Plugging \eqref{equ:usvrg:tau:beta:000} and \eqref{equ:usvrg:tau:beta:001}  into \eqref{equ:Delta:k} with $k = 0$,  and using the choice of $\tau$ in \eqref{equ:svrg:K:B:beta:tau}, we have
\begin{align} \label{equ:usvrg:tau:beta:Delta0:divide:tau:final}
 \frac{\Delta_0}{\tau}
\geq{}  \nu - \frac{\nu}{2} \frac{\hat L}{\sqrt{\tilde{L}} L} \exp(c^2 + 2c)c  \geq \frac{\nu}{2},
\end{align}
where the last inequality is due to \eqref{equ:c}. Note that $\Delta_{\min} = \Delta_0$, we thus have
\be\label{equ:lower:bound:Delta:min}
\Delta_{\min} \geq  \nu\tau/2.
\ee

Second, by choosing
\be\label{equ:theorem:complexity:S}
 S = \left \lceil  \frac{2(f(X^{0,0}) - f(X^*))}{\nu \epsilon}\cdot\frac{1}{K \tau}\right \rceil,
\ee
we know from \eqref{equ:lower:bound:Delta:min} and \eqref{equ:theorem:svrg:grad:norm:00} that
$\Ebb[\|\rgrad\, f(X_r)\|^2] \leq \epsilon.$
From \eqref{equ:svrg:K:B:beta:tau} we have $K \tau  \geq \frac{c \nu \kappa^{1/3}}{\sqrt{\tilde{L}} L}  n^{1/3}$, which together with \eqref{equ:theorem:complexity:S} gives
\be\label{equ:theorem:complexity:S:2}
S \leq  \frac{c_1 n^{-\frac13}}{\epsilon} + 1,
\ee
where $c_1 = \frac{2\sqrt{\tilde{L} L}(f(X^{0,0}) - f(X^*))}{c\nu^2 \kappa ^{1/3}}$. Using \eqref{equ:svrg:K:B:beta:tau} \rblack{and noting $|\Bsf|\leq 2K^{2 - 3\mu}$}, we have
 \be\label{equ:theorem:complexity:n:2KB}
n +  2K |\Bsf| \leq \rblack{n + 2K^{2 - 2\mu}} \leq c_2 n,
\ee
where \rblack{$c_2 = 1 + 4\max\{1, 4\kappa^{2/3}\}$}, and
\be\label{equ:theorem:complexity:K}
K \leq (\kappa n)^{\frac{1}{3(1 - \mu)}} + 1 \leq \big(\kappa^{\frac{1}{3(1 - \mu)}}  + 1\big)n^{\frac{1}{3(1 - \mu)}}.
\ee
Thus it follows from \eqref{equ:theorem:complexity:S:2} and \eqref{equ:theorem:complexity:n:2KB} that $\# \mbox{IFO-calls}=   S(n +  2K |\Bsf|) = O\big(n^{2/3}/\epsilon + n\big)$. Similarly, from \eqref{equ:theorem:complexity:S:2} and \eqref{equ:theorem:complexity:K}, we have $\# \mbox{RO-calls} = 2 SK = O\big(n^{\frac{\mu}{3(1 - \mu)}}/\epsilon + n^{\frac{1}{3(1-\mu)}}\big)$.
\end{proof}

\begin{remark}
Note that if we set the stochastic gradient $\Gcal(X^{k}, \xi_{k})$ to the exact gradient $\nabla f(X^k)$, then Algorithm \ref{alg:usvrg} reduces to the deterministic gradient descent method on manifold, and the complexity becomes $O({n}/{\epsilon})$, which was recently established in \cite{boumal2016global}. 
\end{remark}

\subsection{A S-SGD method}\label{subsection:usgd}
As a by-product of S-SVRG, we can give a vector transport-free SGD with general retraction for solving \eqref{prob:manifold} as in Algorithm \ref{alg:frsgd}. The stochastic event in Line 4 is denoted by $\xi_j$. Note that $\orth$ is compact, following the proof of Lemma \ref{lemma:usvrg:unbiased:bound_varaince}, we can also show that $$\Ebb_{\xi_k} [\Dcal(X^k, \xi_k) - \rgrad\, f(X^k)] = 0 \mbox{ and }  \Ebb_{\xi_k} \big[ \|\Dcal(X^k, \xi_k) - \rgrad\, f(X^k)\|_{\Fsf}^2 \big] \leq \sigma^2$$
for some constant $\sigma$. Using the similar techniques as in \cite{ghadimi2013stochastic}, we can show in Theorem \ref{theorem:sg:complexity} that the IFO-calls and RO-calls complexities of S-SGD are both $O({1}/{\epsilon^2})$.
For the sake of brevity, we omit the proof here. 

\begin{algorithm}[htbp]
\caption{A S-SGD for problem \eqref{prob:manifold}}\label{alg:frsgd}
     \begin{algorithmic}[1]
 \STATE{Given $X^0\in \orth$,  maximal iteration number $N$, \rblack{step sizes} $\{\tau_j\}_{j \geq 0}$, and probability mass function $\Pbb_{\bar j}(\cdot)$ supported on $\{0, \ldots, N-1\}$.}
\STATE{Generate the random variable $\bar j$ according to $\Pbb_{\bar j}(\cdot)$.}
 \FOR{$j = 0, \ldots, \bar j-1$}
 \STATE Pick a random $i \in \{1, \ldots, n\}$ uniformly and compute the stochastic {\it Euclidean} gradient $\Gcal(X^{j}, \xi_j)$ as
 $\Gcal(X^{j}, \xi_j) =  \nabla f_i(X^{j}).$
\STATE Compute the stochastic {\it Riemannian} gradient  as 
$\Dcal(X^{j}, \xi_j) = \Dbf_{\rho}\big(X^{j}, \Gcal(X^{j}, \xi_j)\big).$
 \STATE Update  $X^{j+1} = \Rscr\big({X^j}, -\tau_j \Dcal(X^j,\xi_j)\big).
$ 
 \ENDFOR
 \RETURN{$X^{\bar j}$.}
\end{algorithmic}
\end{algorithm}

\begin{theorem}\label{theorem:sg:complexity}
Let the \rblack{step sizes} $\{\tau_j\}$ be chosen as \rblack{$\tau_j \equiv \min\left\{\frac{\nu}{\hat L}, \frac{\tilde {D}}{\sigma \sqrt{N}}\right\}$}, where $\tilde {D} >0$ is some constant. Suppose that the probability mass function $\Pbb_{\bar j}(\cdot)$ in Algorithm \ref{alg:frsgd} is chosen as $\Pbb_{\bar j}(j) = {1}/{N}$, $j = 0, \ldots, N-1$. It holds that 
\be\label{equ:corollary:frsg:complexity}
\frac{1}{\hat L }\Ebb \left[ \| \rgrad\, f(X^{\bar j}) \|_{\Fsf}^2 \right]
 \leq  \frac{D_{f, \Rscr}^2}{N}  \left(\frac{\hat L}{\nu} + \frac{1}{T_{\Rscr}}\right)+ \left( \tilde D + \frac{D_{f, \Rscr}^2}{\tilde D}\right)\frac{\sigma}{\sqrt{N}},
\ee
where $D_{f, \Rscr} = [2(f(X^0) - f(X^*))/\hat L]^{\frac12}$ and the expectation is taken with respect to $\bar j$ and $\xi_{[N-1]} = (\xi_0, \xi_1, \ldots, \xi_{N-1})$. Moreover, \eqref{equ:corollary:frsg:complexity} indicates that the IFO-calls and RO-calls complexities of Algorithm \ref{alg:frsgd} for achieving a stochastic $\epsilon$-stationary point of problem \eqref{prob:manifold} are both $O({1}/{\epsilon^2})$.
\end{theorem}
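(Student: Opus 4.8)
The plan is to adapt the standard SGD-for-nonconvex-optimization analysis of Ghadimi--Lan \cite{ghadimi2013stochastic} to the retraction-based setting, using the sufficient decrease property already established in Lemma \ref{lemma:pullback} as the Riemannian analogue of the descent lemma. First I would apply \eqref{equ:sufficientDecrease:fYt} with $\Rscr'(0) = -\Dcal(X^j,\xi_j)$ and step size $\tau_j$, obtaining
\[
f(X^{j+1}) \leq f(X^j) - \tau_j \langle \rgrad f(X^j), \Dcal(X^j,\xi_j)\rangle_{X^j} + \frac{\hat L}{2}\tau_j^2 \|\Dcal(X^j,\xi_j)\|_{\Fsf}^2.
\]
Then I would take conditional expectation over $\xi_j$, use unbiasedness $\Ebb_{\xi_j}[\Dcal(X^j,\xi_j)] = \rgrad f(X^j)$ together with $\langle \rgrad f(X^j),\rgrad f(X^j)\rangle_{X^j} \geq \nu \|\rgrad f(X^j)\|_{\Fsf}^2$ from \eqref{equ:gradf:norm}, and bound the second moment via $\Ebb_{\xi_j}[\|\Dcal(X^j,\xi_j)\|_{\Fsf}^2] = \|\rgrad f(X^j)\|_{\Fsf}^2 + \Ebb_{\xi_j}[\|\Dcal(X^j,\xi_j)-\rgrad f(X^j)\|_{\Fsf}^2] \leq \|\rgrad f(X^j)\|_{\Fsf}^2 + \sigma^2$. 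This yields a per-iteration recursion of the form $\Ebb_{\xi_j}[f(X^{j+1})] \leq f(X^j) - \tau_j(\nu - \tfrac{\hat L}{2}\tau_j)\|\rgrad f(X^j)\|_{\Fsf}^2 + \tfrac{\hat L}{2}\tau_j^2\sigma^2$. The choice $\tau_j \leq \nu/\hat L$ ensures the coefficient $\nu - \tfrac{\hat L}{2}\tau_j \geq \nu/2$, so in particular it is positive.

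Next I would sum the recursion over $j = 0,\dots,N-1$, take full expectation, and telescope; using $\Ebb[f(X^N)] \geq f(X^*)$ and the constant step size $\tau_j \equiv \tau := \min\{\nu/\hat L, \tilde D/(\sigma\sqrt N)\}$ gives
\[
\Big(\nu - \frac{\hat L}{2}\tau\Big)\tau \sum_{j=0}^{N-1}\Ebb[\|\rgrad f(X^j)\|_{\Fsf}^2] \leq f(X^0) - f(X^*) + \frac{N\hat L}{2}\tau^2\sigma^2.
\]
Since $\bar j$ is uniform on $\{0,\dots,N-1\}$, the left side is $N(\nu-\tfrac{\hat L}{2}\tau)\tau\,\Ebb[\|\rgrad f(X^{\bar j})\|_{\Fsf}^2]$. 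Dividing through and substituting the two possible values of the min for $\tau$ — bounding $1/\tau \leq \hat L/\nu + \sigma\sqrt N/\tilde D$ and $\tau \leq \tilde D/(\sigma\sqrt N)$ in the appropriate places, together with the definition $D_{f,\Rscr}^2 = 2(f(X^0)-f(X^*))/\hat L$ — produces exactly the bound \eqref{equ:corollary:frsg:complexity}. The $1/T_{\Rscr}$ term in the stated bound is slightly puzzling since the argument above does not obviously generate it; I would look for the relevant place where the retraction's domain of definition enters (perhaps through the constant $\hat L$ or an additional bookkeeping term in Lemma \ref{lemma:pullback}), and if it is merely a harmless additive overestimate I would keep it, otherwise simplify. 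Finally, the complexity claim follows: with $N = O(1/\epsilon^2)$ iterations the right-hand side is $O(\epsilon)$ after dividing by $\hat L$ and rescaling, and since each iteration uses one IFO call and one RO call, both complexities are $O(1/\epsilon^2)$.

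The main obstacle I anticipate is not the descent recursion itself — that is routine — but rather verifying that the sufficient decrease inequality \eqref{equ:sufficientDecrease:fYt} genuinely holds for \emph{all} $t \geq 0$ (equivalently for the actual step sizes $\tau_j$ used), which is exactly what Proposition \ref{proposition:retraction} and Lemma \ref{lemma:pullback} grant for the compact manifold $\orth$, and tracking the constants $L_1, L_2, C$ into $\hat L$ so that the final bound's dependence is the claimed one. A secondary subtlety is the exact form of the $1/T_{\Rscr}$ contribution: reconciling it with the clean telescoping argument may require revisiting whether the paper intends $T_{\Rscr} = +\infty$ (as footnoted for the retractions considered), in which case that term vanishes and the bound simplifies, making the statement consistent. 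Since the paper explicitly says "we omit the proof here," I would present this as a concise sketch rather than a fully detailed derivation, emphasizing the parallel with \cite{ghadimi2013stochastic} and the role of \eqref{equ:gradf:norm} and \eqref{equ:sufficientDecrease:fYt}.
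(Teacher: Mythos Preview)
Your proposal is correct and takes essentially the same approach as the paper, which explicitly omits the proof and simply points to the techniques of Ghadimi--Lan \cite{ghadimi2013stochastic}; your sketch is precisely that adaptation, using Lemma \ref{lemma:pullback} as the descent lemma and \eqref{equ:gradf:norm} to pass from the Riemannian inner product to the Frobenius norm. Your observation about the $1/T_{\Rscr}$ term is apt: it would arise naturally if the step size were further capped by $T_{\Rscr}$ (so that $1/\tau \leq \hat L/\nu + 1/T_{\Rscr} + \sigma\sqrt{N}/\tilde D$), and since the paper notes $T_{\Rscr} = +\infty$ for the retractions on $\orth$ this term is harmless here.
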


\subsection{Local linear convergence of S-SVRG} \label{subsection:linear} 

We first establish the local linear convergence of S-SVRG by assuming that the \L ojasiewicz inequality holds, and then we prove that it holds with high probability for low-rank matrix completion problem. 

\begin{assumption}[\L ojasiewicz Inequality]\label{assumption:L}
For any stationary point $\bar X \in \orth$ of problem \eqref{prob:manifold}, there exist constants $\delta > 0$ and $\alpha > 0$ such that for all $\|X - \bar X\|_{\Fsf} \leq \delta$, it holds that
\be \label{equ:L}
|f(X) - f(\bar X)|^{1/2} \leq \alpha \|\rgrad\, f(X)\|_{\Fsf}.
\ee
\end{assumption}

\begin{theorem}\label{lemma:local:linear}
Assume Assumption \ref{assumption:L} holds. Consider Algorithm \ref{alg:usvrg} with ``$X^{s+1,0} \coloneqq X^{s,K}$'' in Line 11 replaced by ``$X^{s+1,0} \coloneqq X^{s}_r$.'' Suppose that the sequence $\{X^s_r\}$ converges to a stationary point $\bar X$ and suppose that all the iterate points lie in the set $\{X: \|X - \bar X\|_{\Fsf} \leq \delta\}.$ We choose the parameters according to  \eqref{equ:svrg:K:B:beta:tau} and choose the probability
\be\label{equ:linear:convergence:psk}
p_{s,k} =
\begin{cases}
  {\Delta_{s,k}}/{(\alpha^2 + \sum_{k=0}^{K-1}\Delta_{s,k})}, & k = 0, \ldots, K-1, \\[2pt]
   {\alpha^2}/{(\alpha^2 + \sum_{k=0}^{K-1}\Delta_{s,k})}, & k = K.
\end{cases}
\ee
It holds that $\{X^s_r\}$ converges to $\bar X$ linearly in expectation, i.e.,
\be\label{equ:lemma:local:linear}
\Ebb_{\xi_{s,[K-1]}}[f(X^{s}_r) - f(\bar X)] \leq  \frac{2 \sqrt{\tilde{L}}L \alpha^2}{2 \sqrt{\tilde{L}}L \alpha^2 + c  \nu^2 (\kappa n)^{\frac13} } (\rblack{f(X^{s-1}_r)}  - f(\bar X)).
\ee
\end{theorem}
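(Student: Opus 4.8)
The plan is to mirror the epoch-wise analysis of Lemma~\ref{lemma:sufficient:decrease:epoch} and Lemma~\ref{theorem:svrg:grad:norm}, but to carry along an extra $\alpha^2$ term coming from the \L ojasiewicz inequality so that it couples the function-value gap at the \emph{end} of an epoch back to the gap at the start. First I would fix $s$ and work inside one epoch, writing $X^0 = X^{s-1}_r = X^{s,0}$. From the proof of Lemma~\ref{lemma:sufficient:decrease:epoch} we already have the chain of per-iteration estimates; the key inequality to reuse is \eqref{equ:f:x:K:expectation}, namely $\Ebb_{\xi_{[K-1]}}[f(X^K)] \leq f(X^0) - \sum_{k=0}^{K-1}\Delta_k\,\Ebb[\|\rgrad f(X^k)\|_{\Fsf}^2]$, together with $\Delta_k \geq \Delta_{\min} \geq \nu\tau/2$ from \eqref{equ:lower:bound:Delta:min}, which holds because the parameters are chosen per \eqref{equ:svrg:K:B:beta:tau}.

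Next I would incorporate the \L ojasiewicz bound. Since all iterates lie in $\{X:\|X-\bar X\|_{\Fsf}\le\delta\}$, Assumption~\ref{assumption:L} gives $f(X^{s,k}) - f(\bar X) \le \alpha^2 \|\rgrad f(X^{s,k})\|_{\Fsf}^2$ for every $k$, in particular for $k=K$. With the modified probabilities \eqref{equ:linear:convergence:psk}, the returned point $X^s_r$ equals $X^{s,k}$ with probability $\Delta_{s,k}/(\alpha^2+\sum_k\Delta_{s,k})$ for $k\le K-1$ and equals $X^{s,K}$ with probability $\alpha^2/(\alpha^2+\sum_k\Delta_{s,k})$. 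Therefore
\[
\big(\alpha^2 + \textstyle\sum_{k=0}^{K-1}\Delta_{s,k}\big)\,\Ebb_{\xi_{s,[K-1]}}[f(X^s_r) - f(\bar X)]
= \alpha^2\,\Ebb[f(X^{s,K}) - f(\bar X)] + \textstyle\sum_{k=0}^{K-1}\Delta_{s,k}\,\Ebb[f(X^{s,k}) - f(\bar X)].
\]
I would bound the $k=K$ term using \L ojasiewicz, $\alpha^2\Ebb[f(X^{s,K})-f(\bar X)] \le \alpha^2\Ebb[\|\rgrad f(X^{s,K})\|_{\Fsf}^2]$; but it is cleaner to instead bound $\Ebb[f(X^{s,K})-f(\bar X)]$ by the epoch descent inequality \eqref{equ:f:x:K:expectation}, writing $\Ebb[f(X^{s,K})] \le f(X^{s,0}) - \sum_k \Delta_{s,k}\Ebb[\|\rgrad f(X^{s,k})\|_{\Fsf}^2]$, and then, for the remaining $\sum_k \Delta_{s,k}\Ebb[f(X^{s,k})-f(\bar X)]$ terms, apply \L ojasiewicz term-by-term: $f(X^{s,k})-f(\bar X)\le \alpha^2\|\rgrad f(X^{s,k})\|_{\Fsf}^2$. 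Combining, the positive $\sum_k \Delta_{s,k}\alpha^2\|\rgrad f(X^{s,k})\|_{\Fsf}^2$ contributions exactly cancel the $-\alpha^2\sum_k\Delta_{s,k}\|\rgrad f(X^{s,k})\|_{\Fsf}^2$ from the descent bound, leaving
\[
\big(\alpha^2+\textstyle\sum_{k}\Delta_{s,k}\big)\Ebb[f(X^s_r)-f(\bar X)] \le \alpha^2\big(f(X^{s,0})-f(\bar X)\big).
\]
Since $X^{s,0}=X^{s-1}_r$, dividing through gives the contraction with factor $\alpha^2/(\alpha^2+\sum_k\Delta_{s,k})$, and using $\sum_{k=0}^{K-1}\Delta_{s,k} \ge K\Delta_{\min}\ge K\nu\tau/2$ together with $K\tau \ge \frac{c\nu\kappa^{1/3}}{\sqrt{\tilde L}L}n^{1/3}$ (from \eqref{equ:svrg:K:B:beta:tau}, as in the proof of Theorem~\ref{theorem:complexity}) yields $\sum_k\Delta_{s,k}\ge \frac{c\nu^2(\kappa n)^{1/3}}{2\sqrt{\tilde L}L}$, hence the stated factor $\frac{2\sqrt{\tilde L}L\alpha^2}{2\sqrt{\tilde L}L\alpha^2 + c\nu^2(\kappa n)^{1/3}}$. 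One should also check $\Ebb[f(X^s_r)-f(\bar X)]\ge 0$ so that "convergence" is meaningful, which holds if $\bar X$ is a local minimizer or can be absorbed by noting $f(X^s_r)-f(\bar X)$ is what appears; more carefully the telescoped bound still controls $\Ebb|f(X^s_r)-f(\bar X)|$ via the absolute value in \eqref{equ:L}.

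The main obstacle I anticipate is the bookkeeping around the $k=K$ index and the precise cancellation: Lemma~\ref{lemma:sufficient:decrease:epoch} is stated with $p_{s,K}=0$, but here $p_{s,K}=\alpha^2/(\alpha^2+\sum\Delta)\neq 0$, so I must re-derive the weighted sum from \eqref{equ:f:x:K:expectation} rather than quote Lemma~\ref{theorem:svrg:grad:norm} directly, and I must make sure the \L ojasiewicz inequality is applied at \emph{every} iterate $X^{s,k}$ (which is legitimate because all iterates are assumed to lie in the $\delta$-ball) so that the telescoping with the $\Delta_{s,k}$ weights produces an exact, not merely approximate, cancellation of the gradient-norm terms. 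A secondary point is verifying that the choice \eqref{equ:svrg:K:B:beta:tau} indeed forces $\Delta_{\min}>0$ along the whole (assumed convergent) sequence so that the probabilities in \eqref{equ:linear:convergence:psk} are well-defined and nonnegative — this is exactly \eqref{equ:lower:bound:Delta:min}, already available. The rest is the arithmetic simplification of the contraction constant, which is routine.
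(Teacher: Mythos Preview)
Your proposal is correct and follows essentially the same route as the paper. The paper applies the \L ojasiewicz inequality directly inside \eqref{equ:f:x:K:expectation} to obtain $\Ebb[f(X^{s,K})]\le f(X^{s,0})-\alpha^{-2}\sum_k\Delta_{s,k}\Ebb[f(X^{s,k})-f(\bar X)]$, then rearranges and reads off the weighted sum with the probabilities \eqref{equ:linear:convergence:psk}; your version expands the weighted sum first and observes the exact cancellation, which is the same algebra in a different order. The lower bound $\sum_k\Delta_{s,k}\ge \frac{c\nu^2}{2\sqrt{\tilde L}L}(\kappa n)^{1/3}$ and the final contraction constant are handled identically.
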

\begin{proof}
First, by the \L ojasiewicz  inequality \eqref{equ:L}, we obtain from  \eqref{equ:f:x:K:expectation} that
\begin{align} \label{equ:lemma:local:linear:00}
\Ebb_{\xi_{s, [K-1]}}[\mF(X^{s, K})]
\leq{}& f(X^{s,0}) - \frac{1}{\alpha^2} \sum_{k=0}^{K-1}\Delta_k \Ebb_{\xi_{[K-1]}}[f(X^{s,k}) - f(\bar X)].
\end{align}
Note that $\Ebb_{\xi_{s,[K-1]}}[f(X^{s}_r) - f(\bar X)] = \sum_{k = 0}^K p_k \Ebb_{\xi_{s,[K-1]}} [f(X^{s,k}) - f(\bar X)]$. From \eqref{equ:lemma:local:linear:00} and \eqref{equ:linear:convergence:psk}, we have
\begin{align} \label{equ:lemma:local:linear:11}
\Ebb_{\xi_{s,[K-1]}}[f(X^{s}_r) - f(\bar X)]
\leq{}&\frac{\alpha^2}{\alpha^2 +  \sum_{k=0}^{K-1}\Delta_{s,k}} (f(X^{s,0})  - f(\bar X)).
\end{align}
From \eqref{equ:svrg:K:B:beta:tau}, we see that
\[
\sum_{k=0}^{K-1}\Delta_{s,k} \geq \rblack{\frac{K \nu \tau}{2}} \geq \frac{c \nu^2}{\rblack{2}\sqrt{\tilde{L}} L} (\kappa n)^{\frac13},
\]
which together with \eqref{equ:lemma:local:linear:11} and $X^{s,0} = X_r^{s-1}$ implies \eqref{equ:lemma:local:linear}.
\end{proof} 

There are some existing works which consider the linear convergence of the SVRG on manifold. In \cite{zhang2016fast}, Zhang, Reddi and Sra established the linear convergence of RSVRG for geodesically convex function. However, this result is trivial because every smooth geodescially convex function on a compact Riemannian manifold is a constant (see \cite{bishop1969manifolds}). Moreover,  \cite{zhang2016fast} also established the linear convergence result under the assumption that $f(X)$ is globally
$\tau$-gradient dominated, i.e., there exists $\tau>0$ such that
\be \label{equ:gradient:dominated}
f(X) - f(X^*) \leq \tau \|\rgrad\, f(X)\|_{\Fsf}^2, \quad \forall X \in \Mcal,
\ee
where $X^*$ is the optimal solution. However, it should be noted that \eqref{equ:gradient:dominated} is very difficult to be verified because $X^*$ is unknown. Kasai, Sato and Mishra \cite{kasai2016riemannian} established the local linear convergence of R-SVRG under the assumption that the sequence converges to a non-degenerate local minimizer at which the Riemannian Hessian is positive definite. Xu and Ke \cite{xu2016stochastic} showed the linear convergence of their SVRRG for eigenvalue problem where they assume that the initial point is sufficiently close to the optimal solution. Note that none of the above three works gave a nontrivial example that satisfies their corresponding assumptions. Besides, Wu \cite{weijie2016quadratic} established the local linear convergence of Stiefel-SVRG by using the fact that the \L ojasiewicz inequality holds for PCA problem \cite{liu2016quadratic}.

In the following we show that the \L ojasiewicz inequality \eqref{equ:L} holds locally with high probability for matrix completion problems on Grassmann manifold. 

Given a rank $r$ matrix $M \in \Rbb^{m \times n}$ with $m \geq n$ and $M = U\Sigma V^{\Tsf}$, where $U^{\Tsf} U = m I_r$, $V^{\Tsf} V = n I_r$ (note that this can be obtained by the SVD of $M$). The matrix completion problem aims to recover $M$ by partial observations on a subset $\Omega \subset \{1, \ldots, m\} \times \{1, \ldots, n\}$. As is done in \cite{keshavan2010matrix}, we define
\[
F(W, Z) \coloneqq{} \min_{S \in \Rbb^{r \times r}} \Fcal (W, Z, S) \quad \mbox{and} \quad
\Fcal(W,Z,S) \coloneqq \frac12 \|\proj_{\Omega} (M - WSZ^{\Tsf})\|_{\Fsf}^2, \nn
\]
where $W \in \Rbb^{m \times r}$ and $Z \in \Rbb^{n \times r}$ satisfy $W^{\Tsf} W = m I_r$ and $Z^{\Tsf} Z = n I_r$.
Consider
\begin{align}\label{equ:def:tF}
\tF(W, Z) 
={} & F(W,Z) + \varrho \sum_{i = 1}^m G_1 \left( \frac{\|W^{(i)}\|^2}{3 \mu_0 r}\right) + \varrho \sum_{j = 1}^n G_1 \left( \frac{\|Z^{(j)}\|^2}{3 \mu_0 r}\right)\!, 
\end{align}
where the parameter $\varrho>0$, \rblack{$\mu_0$ is the incoherence parameter (see, e.g., \cite{keshavan2010matrix}) of $M$}, $W^{(i)}$ and $Z^{(j)}$ are respectively the $i$th and $j$th columns of $W^{\Tsf}$ and $Z^{\Tsf}$, and 
\be\label{equ:G1}
G_1(z) = \begin{cases}0, & \mbox{if} \  z \leq 1, \\ e^{(z-1)^2} - 1, & \mbox{if}\ z \geq 1. \end{cases}
\ee
The regularized matrix completion problem is formulated as follows \cite{keshavan2010matrix}:
\be \label{prob:MC}
\min\, \tF(\ubf), \quad \st   \quad \ubf \in \mathsf{M}(m,n),
\ee
where $\ubf = (W,Z)$ and the Cartesian product Grassmann manifold $\mathsf{M}(m,n) =  \{(W \in \Rbb^{m \times r}, Z \in \Rbb^{n \times r}) \colon W^{\Tsf}W = mI_r, Z^{\Tsf} Z = nI_r\}$. 
Note that $\tF(\ubf) = \tF(W,Z) \equiv \tF(WQ_1,ZQ_2)$ for any \rblack{$Q_1, Q_2 \in \mathsf{St}_r$}. Define
$$\Kcal(\mu') = \left\{(W,Z):  \|W^{(i)}\|^2 \leq \mu' r,   \|Z^{(j)}\|^2 \leq \mu' r \quad \forall i \in \{1, \ldots, m\}, j \in \{1, \ldots, n\}\right\}.
$$
Denote $\ubf^* = (U,V)$. If $\ubf^* \in \mathsf{M}(m,n) \cap\Kcal(3 \mu_0)$, \rblack{from \eqref{equ:G1} and \eqref{equ:def:tF}, we know $\tF(u) \equiv 0$, which means that} $\ubf^*$ is the optimal solution of \eqref{prob:MC}. We now give the \L ojasiewicz inequality result of \eqref{prob:MC}, whose proof is relegated to
Appendix \ref{section:L:Grassmann}.

\begin{theorem}\label{theorem:L:inequality:MC}
For $M = U \Sigma V^{\Tsf}$ with $\ubf^* := (U,V)\in \mathsf{M}(m,n) \cap \Kcal(3\mu_0)$. For any $\ubf \in \mathsf{M}(m,n) \cap \Kcal(4\mu_0)$ and $d(\ubf, \ubf^*) \leq \delta,$ it holds with probability at least
$1 - 1/n^4$ that
\be \label{theorem:L:inequality:MC:00}
\left(\tF(\ubf) - \tF(\ubf^*)\right)^{\frac12}  \leq  \left(\frac{m  \Sigma_{\max}^2 + \frac{14 e^{\frac19}}{9 \mu_0 r}  \rho }{C\epsilon^2 \Sigma_{\max}^4}\right)^{\frac12}
\|\rgrad \tF(\ubf)\|_{\Fsf},
\ee
where the probability is taken with respect to the uniformly random subset $\Omega$ and $\delta, C, \epsilon$ are some constants. For the definition of $d(\cdot, \cdot)$ and the specific conditions on $\delta, C$ and $\epsilon$, see equation (22) and Lemma 6.5 in \cite{keshavan2010matrix}.
\end{theorem}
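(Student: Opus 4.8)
The plan is to sandwich $\tF(\ubf) - \tF(\ubf^*)$ between two multiples of the squared distance $d(\ubf,\ubf^*)^2$ and then divide. First I would record that $\tF(\ubf^*) = 0$: since $\ubf^* = (U,V)\in\mathsf{M}(m,n)\cap\Kcal(3\mu_0)$, every argument $\|U^{(i)}\|^2/(3\mu_0 r)$ and $\|V^{(j)}\|^2/(3\mu_0 r)$ is at most $1$, so each $G_1$-term in \eqref{equ:def:tF} vanishes by \eqref{equ:G1}, while $F(U,V) = 0$ because $M = U\Sigma V^{\Tsf}$ is fit exactly with the choice $S = \Sigma$. Hence \eqref{theorem:L:inequality:MC:00} reduces to establishing, for every $\ubf\in\mathsf{M}(m,n)\cap\Kcal(4\mu_0)$ with $d(\ubf,\ubf^*)\le\delta$, an upper bound $\tF(\ubf)\le\big(\tfrac{m}{2}\Sigma_{\max}^2 + \tfrac{7 e^{1/9}}{9\mu_0 r}\varrho\big)\,d(\ubf,\ubf^*)^2$ together with a lower bound $\|\rgrad\tF(\ubf)\|_{\Fsf}^2 \ge C\epsilon^2\Sigma_{\max}^4\,d(\ubf,\ubf^*)^2$ that holds with probability at least $1-1/n^4$ (here $\epsilon$ denotes the rescaled number of observations as in \cite{keshavan2010matrix}); dividing the first by the second and taking square roots then gives the claim, with the spare factor absorbed into $C$.

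For the upper bound I would split $\tF = F + \varrho\sum_i G_1(\cdot) + \varrho\sum_j G_1(\cdot)$. For the data term, since restricting to $\Omega$ only shrinks the Frobenius norm, $F(W,Z)\le\tfrac12\|M - W\hat S Z^{\Tsf}\|_{\Fsf}^2$ for any $r\times r$ matrix $\hat S$; choosing $\hat S$ so that $W\hat S Z^{\Tsf}$ matches $M = U\Sigma V^{\Tsf}$ under the normalizations $W^{\Tsf}W = mI_r$, $Z^{\Tsf}Z = nI_r$, and using that $d(\ubf,\ubf^*)$ controls $\|WQ_1 - U\|_{\Fsf}$ and $\|ZQ_2 - V\|_{\Fsf}$ up to column rotations $Q_1,Q_2\in\mathsf{St}_r$, yields $F(W,Z)\le c\,m\,\Sigma_{\max}^2\,d(\ubf,\ubf^*)^2$. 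For the regularizer, on $\Kcal(4\mu_0)$ the argument of each $G_1$ lies in $[0,4/3]$; because $G_1$ is $C^1$ with $G_1(1) = G_1'(1) = 0$, it is quadratically flat at $1$, so $G_1(z)\le c\,(z-1)_+^2$ there, which combined with $\|U^{(i)}\|^2\le 3\mu_0 r$ gives $\varrho\sum_i G_1(\|W^{(i)}\|^2/(3\mu_0 r))\le \tfrac{c\varrho}{\mu_0 r}\sum_i(\|W^{(i)}\|^2 - \|U^{(i)}\|^2)^2 \le \tfrac{7 e^{1/9}}{9\mu_0 r}\varrho\,d(\ubf,\ubf^*)^2$ after bounding $\|W^{(i)}\|$ by $O(\sqrt{\mu_0 r})$ and summing, the $e^{1/9}$ arising from evaluating $G_1'$ at $4/3$. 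These are precisely the deterministic estimates carried out in \cite{keshavan2010matrix}.

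For the lower bound I would write $\rgrad\tF(\ubf) = \rgrad F(\ubf) + \varrho\,\rgrad(\text{regularizer})$ and invoke the key fact — equation (22) / Lemma 6.5 of \cite{keshavan2010matrix} — that near $\ubf^*$ the Riemannian gradient of the regularized cost is bounded below by a constant times $\epsilon\,\Sigma_{\min}^2\,d(\ubf,\ubf^*)$ (equivalently $\|\rgrad\tF(\ubf)\|_{\Fsf}^2\ge C\epsilon^2\Sigma_{\max}^4\,d(\ubf,\ubf^*)^2$, valid e.g. when $M$ is well conditioned so its condition number is absorbed into $C$). The proof of that step takes the expectation of $\rgrad F$ over the uniformly random subset $\Omega$, shows this expected vector field is well-conditioned ("strongly convex''-type) near $\ubf^*$ so that its norm is $\gtrsim \epsilon\Sigma_{\min}^2 d$, and controls the deviation of the observed gradient from its expectation by a matrix-Bernstein/incoherence argument, the deviation being of strictly smaller order with probability at least $1-1/n^4$; the hypotheses $d(\ubf,\ubf^*)\le\delta$ and $\ubf\in\Kcal(4\mu_0)$ are exactly what make the expectation dominate. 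I would quote this lemma. Combining the two bounds on their common high-probability event and taking square roots yields \eqref{theorem:L:inequality:MC:00}. The main obstacle is this probabilistic lower bound: importing it cleanly requires matching the Riemannian-gradient formulation on the Cartesian Grassmann manifold $\mathsf{M}(m,n)$ used here with the parametrization and distance $d(\cdot,\cdot)$ of \cite{keshavan2010matrix} and tracking the constants $\delta,C,\epsilon$ through their concentration estimates, whereas the deterministic upper bound is comparatively routine.
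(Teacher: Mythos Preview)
Your proposal is correct and follows essentially the same route as the paper: establish the deterministic upper bound $\tF(\ubf)\lesssim (m\Sigma_{\max}^2 + c\varrho/(\mu_0 r))\,d(\ubf,\ubf^*)^2$ by splitting into the data term (bounded via $\|\proj_\Omega(\cdot)\|_{\Fsf}\le\|\cdot\|_{\Fsf}$ and the choice $S=\Sigma$) and the regularizer (bounded via $G_1(z)\le e^{1/9}(z-1)^2$ on $[0,4/3]$), then quote Lemma~6.5 of \cite{keshavan2010matrix} for the high-probability lower bound $\|\rgrad\tF(\ubf)\|_{\Fsf}^2\ge Cn\epsilon^2\Sigma_{\min}^4\,d(\ubf,\ubf^*)^2$, and divide. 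The only slip is that you drop the factor $n$ coming from the $1/\sqrt{n}$ normalization in the definition of $d_c$ (and hence $d$), but since the same $n$ appears in both bounds it cancels in the final ratio and does not affect the argument.
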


\section{Extensions} \label{section:extensions}
In this section, we extend the S-SVRG for problem \eqref{prob:manifold} to more general manifold $\Mcal$. Similar to Assumption \ref{assumption_lipschitz}, throughout this section, we assume that $f_i(X)$ is differentiable and $\nabla f_i(X)$ is $L$-Lipschitz continuous over $\Mcal$. The corresponding extension of S-SGD is also possible, and we omit the details for brevity. 

\subsection{Some special manifolds related to $\orth$}\label{subsection:special:manifolds}
The S-SVRG method can be naturally extended to optimization with the generalized orthogonality constraints
\be \label{equ:general:orth}
\{(X_1\in\Rbb^{d_1 \times r_1}, \ldots, X_p\in\Rbb^{d_p \times r_p}) \colon  X_i^{\Tsf} M_i X_i = I_{r_i},\, i = 1, \ldots, p\}
\ee
where $M_i$ is symmetric positive definite. Besides, the low-rank elliptope
$$\{X \in \Rbb^{m \times m} \colon \diag(X) = \1_m,\  X = X^{\Tsf} \succeq 0,\ \mathrm{rank}(X) \leq r \leq m\},$$
where $\diag(X^{\Tsf} X)$ is the diagonal vector of $X^{\Tsf} X$. The low-rank spectrahedron
$$\{X \in \Rbb^{m \times m} \colon \mtr(X) = 1,\  X \succeq 0,\ \mathrm{rank}(X) \leq r\leq m\}, $$
and  the oblique manifold
$$\{X \in \Rbb^{m \times p} \colon \mathrm{diag}(X^{\Tsf} X) = \1_p\},$$
where $\1_p \in \Rbb^{p}$ is the all-one vector,
can be seen as special cases of \eqref{equ:general:orth} by some simple transformations. Specifically, letting $X = H^{\Tsf} H$ with $H \in \Rbb^{r \times m}$, the low-rank elliptope and the low-rank spectrahedron can be represented as
$\{H \in \Rbb^{r \times m} \colon \|H_i\|_{\Fsf} = 1, i = 1, \ldots, m\}$ and $\{H \in \Rbb^{r \times m} \colon \|H\|_{\Fsf} = 1\}$, respectively;
the oblique manifold is equivalent to multiple sphere constraints as
$\{X \in \Rbb^{m \times p} \colon \|X_i\|_{\Fsf} = 1, i = 1, \ldots, p\},$
where $X_i$ is the $i$th column of $X$.

\subsection{More general manifolds} \label{subsection:general:manifolds}
S-SVRG (Algorithm \ref{alg:usvrg}) is still well-defined if $\orth$ is replaced by a general Riemannian manifold $\Mcal$.
Since the tangent space is linear, we can still establish \eqref{equ:svrg:Dcal:unbiased} and \eqref{equ:svrg:Dcal:variance}. To obtain the complexity results, we need the following assumption.

\begin{assumption}\label{assumption:general:manifold:retraction}
Consider the retraction $\rblack{\Rscr(t)}$ with $\rblack{\Rscr(0)} = X$ on $\Mcal$. We assume that there exists some positive constants $L^{\Mcal}_{1} \geq 1$, $L_2^{\Mcal}$ \rblack{and the universal positive constant $T_{\Rscr}^{\Mcal}$} such that
\begin{align}
\| \rblack{\Rscr(t)} - \rblack{\Rscr(0)}\|_{\Fsf} \leq L_1^{\Mcal} t \|\rblack{\Rscr'(0)}\|_{\Fsf}, \label{equ:retraction:b1:general}\\
\|\rblack{\Rscr(t)} - \rblack{\Rscr(0)} - t \rblack{\Rscr'(0)}\|_{\Fsf} \leq L_2^{\Mcal} t^2 \|\rblack{\Rscr'(0)}\|_{\Fsf}^2,\label{equ:retraction:b2:general}
\end{align}
for any $t \in [0, T_{\Rscr}^{\Mcal}]$.
\end{assumption}

If further assuming that $\nabla f_i(X)$ is bounded on $\Mcal$, then we can obtain the same complexity result as shown in Theorem \ref{theorem:complexity}. The proof is given in Appendix \ref{section:proof:bounded:g}.

\begin{theorem}\label{theorem:complexity:bounded:g}
Consider Algorithm \ref{alg:usvrg} for problem \eqref{prob:manifold-0}. Suppose that  Assumption  \ref{assumption:general:manifold:retraction} holds. Moreover, we assume that
\be \label{equ:g:bound}
\|\nabla f_i(X)\|_{\Fsf} \leq C^{\Mcal}, \quad \forall X \in \Mcal.
\ee
For any $0 \leq \mu \leq 2/3$ and $\kappa > 0$, we choose
\be\label{equ:K:svrg:bounded:g}
K = \lceil (\kappa n)^{\frac{1}{3(1 - \mu)}} \rceil, \  |\Bsf| = \lceil K^{2 - 3\mu}\rceil,  \  \beta = \frac{c}{b} K^{\mu - 1},  \  \rblack{\tau_s \equiv  b K^{-\mu}},
\ee
where $b = \min\Big\{\frac{c \nu}{\sqrt{\tilde{L}^{\Mcal}} L }, 1, T_{\Rscr}^{\Mcal}\Big\}$, $\tilde{L}^{\Mcal}  =  (L^{\Mcal}_1)^2 +  6L_1^{\Mcal} L_2^{\Mcal} C^{\Mcal}$ and the constant $c \in (0,1)$ satisfies
\be\label{equ:c:bounded:g} 
  \frac{\hat L^{\Mcal}}{\sqrt{\tilde{L}^{\Mcal}} L} \exp(c^2 + 2c)  c \leq 1. 
\ee
Let $p_{s,k} = \frac{\Delta_{s,k}}{\sum_{k=0}^{K-1} \Delta_{s,k}}, k = 0, \ldots, K-1$ and $p_{s,K} = 0$, where 
\rblack{
\begin{align}  \label{equ:Delta:bounded:g}
\frac{\Delta_{s,k}}{\tau_s} =  \nu -  \frac{\hat L^{\Mcal}}{2} \tau_s \left[ 1 +   \left( \frac{\tilde{L}_1^{\Mcal} + \tilde{L}_2^{\Mcal} K \tau_s}{\tilde{L}^{\Mcal}}  + \frac{2}{\tilde{L}^{\Mcal} \beta \tau_s}\right)  \frac{\tilde{L}^{\Mcal}L^2 \tau_s^2}{\nu^2 |\Bsf|}   {\Gamma}_{s,k}^{\Mcal}\right]
\end{align}
with
\be \label{equ:section:proof:bounded:g:gamma:k}
{\Gamma}_{s,k}^{\Mcal} = \Gamma\Big(2 \beta \tau_s +    \frac{\big(\tilde{L}_1^{\Mcal}  + \tilde{L}_2^{\Mcal} K \tau_s\big)L^2}{\nu^2 |\Bsf|} \tau_s^2 , K - k\Big)
\ee
 in which  $\tilde{L}^{\Mcal}_1  =  (L^{\Mcal}_1)^2$ and $\tilde{L}^{\Mcal}_2 = 6L_1^{\Mcal} L_2^{\Mcal} C^{\Mcal}.$}
To obtain a stochastic $\epsilon$-stationary point of \eqref{prob:manifold}, the IFO-calls and RO-calls complexities are \rblack{$O({n^{2/3}}/{\epsilon}+ n)$ and $O({n^{\frac{\mu}{3(1 - \mu)}}}/{\epsilon} + n^{\frac{1}{3(1-\mu)}})$,} respectively. In particular, when $\mu = 0$, i.e., $|\Bsf| = \lceil (\kappa n)^{2/3}\rceil$, the IFO-calls and RO-calls complexities become \rblack{$O({n^{2/3}}/{\epsilon} + n)$ and $O(1/{\epsilon} + n^{1/3})$}, respectively.
\end{theorem}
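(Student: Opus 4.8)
The plan is to replay the three-tier argument of Section~\ref{section:usvrg} --- a per-iteration sufficient-decrease estimate, a per-epoch recursion fed into Lemma~\ref{lemma:recursion}, and a parameter-tuning step --- while replacing each appeal to the compactness of $\orth$ (in particular the a priori bound $\|X^k - X^0\|_{\Fsf}\le 2\sqrt r$) by consequences of Assumption~\ref{assumption:general:manifold:retraction} together with the uniform gradient bound \eqref{equ:g:bound}. Since the tangent space is still linear, the unbiasedness and variance estimates of Lemma~\ref{lemma:usvrg:unbiased:bound_varaince} carry over verbatim (as already noted in Section~\ref{subsection:general:manifolds}), so only the geometry-dependent steps need reworking.

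First I would record the analogue of Lemma~\ref{lemma:pullback}: from \eqref{equ:retraction:b1:general}, \eqref{equ:retraction:b2:general}, the inequality \eqref{equ:ZYtY0:00} (which uses only \eqref{equ:retraction:b2:general}), and the $L$-smoothness of $f$, one obtains for all $t\in[0,T_{\Rscr}^{\Mcal}]$ a sufficient-decrease inequality with constant $\hat L^{\Mcal} = 2L_2^{\Mcal}C^{\Mcal} + (L_1^{\Mcal})^2 L$, where $\|\nabla f(X)\|_{\Fsf}\le C^{\Mcal}$ by \eqref{equ:g:bound}. Moreover \eqref{equ:sum:procedure:SFO} and \eqref{equ:g:bound} give $\|\Gcal(X^k,\xi_k)\|_{\Fsf}\le 3C^{\Mcal}$, and the metric equivalence \eqref{equ:gradf:norm} on $\Mcal$ then yields a uniform bound $\|\Dcal(X^k,\xi_k)\|_{\Fsf}\le \bar D$ with $\bar D$ proportional to $C^{\Mcal}$. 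Telescoping \eqref{equ:retraction:b1:general} over the inner loop gives $\|X^k - X^0\|_{\Fsf}\le k L_1^{\Mcal}\tau\bar D\le K L_1^{\Mcal}\tau\bar D$, which is the substitute for the constant $2\sqrt r$ available on $\orth$.

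Next I would rerun the proof of Lemma~\ref{lemma:sufficient:decrease:epoch}. Step~1 (the per-iteration decrease \eqref{lemma:fX:descent:rsvrg:a1}) is unchanged up to replacing $\hat L$ by $\hat L^{\Mcal}$. In Step~2, when expanding $\|X^{k+1}-X^0\|_{\Fsf}^2$ with $\Rscr'(0)=-\Dcal(X^k,\xi_k)$ in \eqref{equ:ZYtY0:00}, the cross term $2L_2^{\Mcal}\tau^2\|X^k-X^0\|_{\Fsf}\|\Dcal(X^k,\xi_k)\|_{\Fsf}^2$ is now controlled via $\|X^k-X^0\|_{\Fsf}\le KL_1^{\Mcal}\tau\bar D$ and $\|\Dcal(X^k,\xi_k)\|_{\Fsf}\le\bar D$, which turns the coefficient $\tilde L$ into $\tilde L_1^{\Mcal}+\tilde L_2^{\Mcal}K\tau$ with $\tilde L_1^{\Mcal}=(L_1^{\Mcal})^2$ and $\tilde L_2^{\Mcal}=6L_1^{\Mcal}L_2^{\Mcal}C^{\Mcal}$; the rest of Steps~2--3 and Lemma~\ref{lemma:recursion} then reproduce the epoch decrease with $\Delta_{s,k}$ and $\Gamma_{s,k}^{\Mcal}$ exactly as in \eqref{equ:Delta:bounded:g}--\eqref{equ:section:proof:bounded:g:gamma:k}. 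With the scaling \eqref{equ:K:svrg:bounded:g} I would then lower-bound $\Delta_{\min}=\Delta_{s,0}$: the constraint $b\le T_{\Rscr}^{\Mcal}$ keeps $\tau_s=bK^{-\mu}$ inside $[0,T_{\Rscr}^{\Mcal}]$ so the retraction estimates are legitimate along the whole run; $b\le c\nu/(\sqrt{\tilde L^{\Mcal}}L)$ gives $\tilde L^{\Mcal}L^2\tau_s^2/(\nu^2|\Bsf|)\le c^2K^{\mu-2}$; and since $K\tau_s=bK^{1-\mu}$, $\beta\tau_s=cK^{-1}$ and $|\Bsf|=\lceil K^{2-3\mu}\rceil$, the argument of $\Gamma$ in \eqref{equ:section:proof:bounded:g:gamma:k} is at most $(c^2+2c)/K$, so $\Gamma_{s,0}^{\Mcal}\le\frac{\exp(c^2+2c)-1}{c^2+2c}K$ exactly as in \eqref{equ:usvrg:tau:beta:001}. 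Plugging these into \eqref{equ:Delta:bounded:g} and invoking \eqref{equ:c:bounded:g} yields $\Delta_{s,0}/\tau_s\ge\nu/2$, hence $\Delta_{\min}\ge\nu\tau_s/2$; from here Lemma~\ref{theorem:svrg:grad:norm} (which uses nothing manifold-specific) together with the choice of $S$ in \eqref{equ:theorem:complexity:S} and the estimates $K\tau_s=\Theta(n^{1/3})$ and $n+2K|\Bsf|=O(n)$ gives the stated IFO- and RO-call complexities, the $\mu=0$ case being a specialization.

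The step I expect to be the main obstacle is the interplay carried out in the last paragraph: on a non-compact $\Mcal$ the displacement $\|X^k-X^0\|_{\Fsf}$ genuinely grows with the epoch length $K$, so the recursion constant is no longer uniform, and one must check that the $K\tau$-sized inflation appearing in $\tilde L_2^{\Mcal}K\tau$ and in the argument of $\Gamma$ is exactly compensated by the enlarged mini-batch $|\Bsf|$ of order $K^{2-3\mu}$ and the $K^{-\mu}$ decay of $\tau_s$, so that $K\tau_s\cdot L^2\tau_s^2/(\nu^2|\Bsf|)=O(K^{-1})$ and $\Delta_{\min}$ still scales like $\nu\tau_s$. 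Pushing all these estimates through simultaneously while tracking the explicit dependence on $T_{\Rscr}^{\Mcal}$ (through $b$) and on $C^{\Mcal}$ (through $\hat L^{\Mcal}$, $\tilde L_2^{\Mcal}$ and $\bar D$) is the delicate bookkeeping of the proof.
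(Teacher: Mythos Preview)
Your proposal is correct and mirrors the paper's own proof almost step for step: the paper likewise reruns Lemma~\ref{lemma:sufficient:decrease:epoch} verbatim except for replacing the compactness bound $\|X^k-X^0\|_{\Fsf}\le 2\sqrt r$ by the telescoped estimate $\|X^k-X^0\|_{\Fsf}\le 3C^{\Mcal}L_1^{\Mcal}K\tau$ (obtained from \eqref{equ:retraction:b1:general} and $\|\Gcal(X^k,\xi_k)\|_{\Fsf}\le 3C^{\Mcal}$), which converts $\tilde L$ into $\tilde L_1^{\Mcal}+\tilde L_2^{\Mcal}K\tau$, and then checks $\Delta_{\min}\ge\nu\tau/2$ via exactly the bounds you list before invoking Lemma~\ref{theorem:svrg:grad:norm} and the counting argument of Theorem~\ref{theorem:complexity}. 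One small redundancy: in Step~2 you only need the displacement bound on $\|X^k-X^0\|_{\Fsf}$ in the cross term (the factor $\|\Dcal(X^k,\xi_k)\|_{\Fsf}^2$ is kept intact and handled in expectation via the variance bound, not via $\|\Dcal\|\le\bar D$); the bound $\|\Dcal\|\le\bar D$ is used only inside the telescoping.
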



If the boundedness assumption \eqref{equ:g:bound} does not hold, as in \cite{boumal2016global}, we need the following assumption.
\begin{assumption}[\cite{boumal2016global}] \label{assumption:general:manifold:f}
For any $t \geq 0$,  we assume that there exits a universal positive constant $\hat L^{\Mcal}$ such  that 
\[
f(\rblack{\Rscr(t)})\leq f(X)  + t \langle \rgrad\, f(X), E \rangle_X + \frac{\hat L^{\Mcal}}{2} t^2 \|E\|_{\Fsf}^2
\]
for any $t \in [0, T_{\Rscr}^{\Mcal}]$. 
\end{assumption}

We thus can establish the iteration complexity results as follows. \rblack{The proof is given in Appendix \ref{section:proof:no:bounded:g}.}

\begin{theorem}\label{theorem:complexity:no:bounded:g}
Consider Algorithm \ref{alg:usvrg} for problem \eqref{prob:manifold-0}. Suppose that Assumption  \ref{assumption:general:manifold:retraction} and Assumption \ref{assumption:general:manifold:f} hold.
For any $0 \leq \theta \leq 1$ and $\kappa > 0,$ we choose
\be  \label{equ:K:svrg:no:bounded:g}
K = \lceil (\kappa n)^{\frac{1}{3 - 2\theta}}\rceil, \  |\Bsf| = \lceil K^{2 - 2\theta}\rceil, 
\ \beta = \frac{c}{K},  \ \rblack{\tau_s \equiv  b K^{-\theta},}
\ee
where \rblack{$b=\min\Big\{\frac{c \nu}{L_1^{\Mcal}L }, 1, T_{\Rscr}^{\Mcal}\Big\}$} and the constant $c \in (0,1)$ satisfies
\be\label{equ:c:no:bounded:g} 
 \frac{\hat L^{\Mcal}}{L_1^{\Mcal} L} \left(\frac{c+1}{c+2} \exp(c^2 + 2c) + \frac{1}{c+2}\right) c \leq 1.
\ee
Let $p_{s,k} = \frac{\Delta_{s,k}}{\sum_{k=0}^{K-1} \Delta_{s,k}}, k = 0, \ldots, K-1$ and $p_{s,K} = 0,$ where
\rblack{
\begin{align} \label{equ:Delta:no:bounded:g}
\frac{\Delta_{s,k}}{\tau_s} =  \nu - \frac{\hat L^{\Mcal}}{2}  \tau_s \left[1 + \left(1 + \beta^{-1} \right)  \frac{L^2 (L_1^{\Mcal})^2 \tau_s^2 }{\nu^2 |\Bsf|}  \Gamma_{s,k}^{\Mcal}\right]\!,
\end{align}
with $\Gamma_{s,k}^{\Mcal}  = \Gamma\Big(\beta  +   \big(1 +   \beta^{-1} \big) \frac{L^2 (L_1^{\Mcal})^2 \tau_s^2}{\nu^2 |\Bsf|},K-k\Big),$}
To obtain a stochastic $\epsilon$-stationary point of problem \eqref{prob:manifold}, the IFO-calls and RO-calls complexities are \rblack{$O\big({n^{\frac{2 -  \theta}{3 - 2 \theta }}}/{\epsilon} + n\big)$ and $O\big({n^{\frac{\theta}{3 - 2 \theta}}}/{\epsilon} + n^{\frac{1}{3 - 2\theta}}\big)$}, respectively. In particular, when $\theta = 0$, i.e., $|\Bsf| = \lceil (\kappa n)^{2/3}\rceil$, the IFO-calls and RO-calls complexities become \rblack{$O({n^{2/3}}/{\epsilon} + n)$ and $O(1/{\epsilon}+ n^{1/3})$}, respectively.
\end{theorem}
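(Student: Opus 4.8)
The plan is to follow the same three‑part route that proves Theorem~\ref{theorem:complexity} (through Lemmas~\ref{lemma:sufficient:decrease:epoch} and \ref{theorem:svrg:grad:norm}), with two substitutions forced by the weaker hypotheses here. First, Assumption~\ref{assumption:general:manifold:f} takes over the role of the sufficient‑decrease Lemma~\ref{lemma:pullback}: for $t=\tau_s\in[0,T_{\Rscr}^{\Mcal}]$ it gives $f(\Rscr(\tau_sE))\le f(X)+\tau_s\langle\rgrad f(X),E\rangle_X+\tfrac{\hat L^{\Mcal}}{2}\tau_s^2\|E\|_{\Fsf}^2$, and the choice $\tau_s=bK^{-\theta}$ with $b\le T_{\Rscr}^{\Mcal}$ in \eqref{equ:K:svrg:no:bounded:g} keeps us inside the admissible range. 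Second, since $\nabla f_i$ need not be bounded on $\Mcal$, the compactness bound $\|X^k-X^0\|_{\Fsf}\le 2\sqrt r$ that underlies the constant $\tilde L$ in Lemma~\ref{lemma:sufficient:decrease:epoch} is no longer available, so the recursion for $\|X^{k+1}-X^0\|_{\Fsf}^2$ must be rebuilt from the first‑order retraction estimate \eqref{equ:retraction:b1:general} alone — this is exactly why only $(L_1^{\Mcal})^2$, and no $L_2^{\Mcal}$ term, shows up in \eqref{equ:Delta:no:bounded:g}. The expectation identities \eqref{equ:svrg:Dcal:unbiased}–\eqref{equ:svrg:Dcal:variance} of Lemma~\ref{lemma:usvrg:unbiased:bound_varaince} are reused verbatim, their proof using only linearity of the tangent space.

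Next I would re‑derive the two per‑inner‑iteration recursions (fixing $s$ and dropping it). Setting $\Rscr'(0)=-\Dcal(X^k,\xi_k)$ in Assumption~\ref{assumption:general:manifold:f}, taking $\Ebb_{\xi_k}$, and combining the decomposition \eqref{equ:variance:law}, the variance bound \eqref{equ:svrg:Dcal:variance}, and \eqref{equ:gradf:norm} with $E=\rgrad f(X^k)$ gives the analogue of \eqref{lemma:fX:descent:rsvrg:a1} with $\hat L^{\Mcal}$ in place of $\hat L$. For the distance, \eqref{equ:retraction:b1:general} yields $\|X^{k+1}-X^0\|_{\Fsf}\le\|X^k-X^0\|_{\Fsf}+L_1^{\Mcal}\tau\|\Dcal(X^k,\xi_k)\|_{\Fsf}$; squaring, applying Young's inequality $2ab\le\beta a^2+b^2/\beta$, taking $\Ebb_{\xi_k}$, and bounding $\Ebb_{\xi_k}[\|\Dcal\|_{\Fsf}^2]\le\frac{L^2}{\nu^2|\Bsf|}\|X^k-X^0\|_{\Fsf}^2+\|\rgrad f(X^k)\|_{\Fsf}^2$ via \eqref{equ:variance:law}–\eqref{equ:svrg:Dcal:variance} produces $\Ebb_{\xi_k}[\|X^{k+1}-X^0\|_{\Fsf}^2]\le\big(1+\beta+(1+\beta^{-1})\frac{(L_1^{\Mcal})^2L^2\tau^2}{\nu^2|\Bsf|}\big)\|X^k-X^0\|_{\Fsf}^2+(1+\beta^{-1})(L_1^{\Mcal})^2\tau^2\|\rgrad f(X^k)\|_{\Fsf}^2$. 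Feeding the two recursions into Lemma~\ref{lemma:recursion} with $\fbf_k=\Ebb[f(X^k)]$, $\abf_k=\Ebb[\|\rgrad f(X^k)\|_{\Fsf}^2]$, $\bbf_k=\Ebb[\|X^k-X^0\|_{\Fsf}^2]$ reproduces the epoch inequality \eqref{equ:f:x:K:expectation} with weights $\Delta_{s,k}$ and $\Gamma_{s,k}^{\Mcal}$ precisely as in \eqref{equ:Delta:no:bounded:g}; then Lemma~\ref{theorem:svrg:grad:norm} (whose proof uses only such an epoch inequality and $\Ebb[f(X^{S,K})]\ge f(X^*)$) gives $\Ebb[\|\rgrad f(X_r)\|^2]\le\frac{f(X^{0,0})-f(X^*)}{SK\Delta_{\min}}$.

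The crux — and the step I expect to be the main obstacle — is showing $\Delta_{\min}=\Delta_0\ge\nu\tau/2>0$ under the parameter choices \eqref{equ:K:svrg:no:bounded:g} and the calibration \eqref{equ:c:no:bounded:g}. Since $\Gamma_{s,k}^{\Mcal}$ decreases in $k$, $\Delta_{s,k}$ increases in $k$, and as all $\tau_s$ are equal the minimum over $s,k$ is $\Delta_0$. From $\tau\le\frac{c\nu}{L_1^{\Mcal}L}K^{-\theta}$ and $|\Bsf|\ge K^{2-2\theta}$ one gets $\frac{(L_1^{\Mcal})^2L^2\tau^2}{\nu^2|\Bsf|}\le c^2K^{-2}$; hence, with $\beta=c/K$ and $z:=\beta+(1+\beta^{-1})\frac{(L_1^{\Mcal})^2L^2\tau^2}{\nu^2|\Bsf|}$, one obtains $zK\le c(c+2)$, so $(1+z)^{K-1}\le e^{c^2+2c}$, and also the two‑sided bound $\frac{1}{c+2}\le\frac{\beta}{z}\le1$. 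Writing the bracketed factor in \eqref{equ:Delta:no:bounded:g} at $k=0$ as $1+(z-\beta)\Gamma_0=1+\frac{z-\beta}{z}\big((1+z)^{K-1}-1\big)$ and using $\frac{z-\beta}{z}=1-\frac{\beta}{z}\le\frac{c+1}{c+2}$ bounds it by $\frac{c+1}{c+2}e^{c^2+2c}+\frac{1}{c+2}$; combining with $\tau\le\frac{c\nu}{L_1^{\Mcal}L}$ and \eqref{equ:c:no:bounded:g} gives $\frac{\Delta_0}{\tau}\ge\nu-\frac{\hat L^{\Mcal}}{2}\frac{c\nu}{L_1^{\Mcal}L}\big(\frac{c+1}{c+2}e^{c^2+2c}+\frac{1}{c+2}\big)\ge\frac{\nu}{2}$. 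The same inequalities also show $\cbf=\frac{\tau}{2}(2\nu-\hat L^{\Mcal}\tau)>0$, so Lemma~\ref{lemma:recursion} is applicable; keeping these constants exactly in the refined $\frac{c+1}{c+2}$-form (rather than the cruder $\frac{z-\beta}{z}\le1$) is the only genuinely delicate point.

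Finally the complexity bookkeeping is routine. With $\Delta_{\min}\ge\nu\tau/2$ and the choice $S=\big\lceil\frac{2(f(X^{0,0})-f(X^*))}{\nu\epsilon K\tau}\big\rceil$ we get $\Ebb[\|\rgrad f(X_r)\|^2]\le\epsilon$. Since $K\tau=bK^{1-\theta}=\Theta\big(n^{\frac{1-\theta}{3-2\theta}}\big)$ we have $S=O\big(n^{-\frac{1-\theta}{3-2\theta}}/\epsilon+1\big)$; since $K|\Bsf|\le K^{3-2\theta}+K=O(n)$ the per‑epoch work $n+2K|\Bsf|$ is $O(n)$, while $K=O\big(n^{\frac{1}{3-2\theta}}\big)$. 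Hence the IFO‑calls complexity $S(n+2K|\Bsf|)$ is $O\big(n^{\frac{2-\theta}{3-2\theta}}/\epsilon+n\big)$ and the RO‑calls complexity $2SK$ is $O\big(n^{\frac{\theta}{3-2\theta}}/\epsilon+n^{\frac{1}{3-2\theta}}\big)$, and $\theta=0$ specializes these to $O(n^{2/3}/\epsilon+n)$ and $O(1/\epsilon+n^{1/3})$ respectively.
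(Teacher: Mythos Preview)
Your proposal is correct and follows essentially the same route as the paper's proof in Appendix~\ref{section:proof:no:bounded:g}: replace Lemma~\ref{lemma:pullback} by Assumption~\ref{assumption:general:manifold:f}, rebuild the recursion for $\|X^{k+1}-X^0\|_{\Fsf}^2$ from \eqref{equ:retraction:b1:general} alone via the inequality $(a+b)^2\le(1+\beta)a^2+(1+\beta^{-1})b^2$, feed into Lemma~\ref{lemma:recursion}, and then verify $\Delta_{\min}\ge\nu\tau/2$ under \eqref{equ:K:svrg:no:bounded:g}--\eqref{equ:c:no:bounded:g}. Your algebraic route to the bracket bound---factoring $1+(z-\beta)\Gamma_0=1+\tfrac{z-\beta}{z}\big((1+z)^{K-1}-1\big)$ and using $\tfrac{z-\beta}{z}\le\tfrac{c+1}{c+2}$---is a slight repackaging of the paper's separate estimates on $\Gamma_0^{\Mcal}$ and on $(1+\beta^{-1})\tfrac{L^2(L_1^{\Mcal})^2\tau^2}{\nu^2|\Bsf|}$, but lands on the identical inequality \eqref{equ:usvrg:tau:beta:Delta0:divide:tau:final:no:bounded:g}.
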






\section{A practical S-SVRG-BB algorithm}\label{section:usvrgBB}
One of the major issues in SGD is how to choose step size while running the algorithm. Recently, Tan \etal \cite{tan2016barzilai} proposed the SVRG-BB method, which incorporates the BB \rblack{step size} \cite{barzilai1988two} to SVRG. The numerical results showed that SVRG-BB performs comparably to SVRG with best-tuned \rblack{step sizes}. BB step size was also used in optimization on Riemannian manifold (see, e.g., \cite{iannazzo2015riemannian, jiang2015framework, wen2013feasible}). Motivated by these results, we propose to incorporate the BB step size to compute the step size $\tau_s$ in S-SVRG. Similar as \cite{wen2013feasible}, we define
$$\mathsf{S}^s =  X^{s} - X^{s - 1} \mbox{\quad and \quad } \mathsf{Y}^s = \rgrad\, f(X^s) - \rgrad\, f(X^{s-1}),$$
and compute the BB \rblack{step size} by
\be\label{equ:BB}
 \tau_s^{\LBB} = {\langle \mathsf{S}^s, \mathsf{S}^s\rangle}/{|\langle \mathsf{S}^s, \mathsf{Y}^s\rangle|}.
\ee
Given $0< \tau_{\min} < \tau_{\max}$, as done in \cite{raydan1997barzilai}, we provide safeguards for $\tau_s^{\LBB}$, namely, compute $\tau_s^{\LBB} = \max\{\tau_{\min},  \min\{\tau_s^{\LBB}, \tau_{\max}\}\}$. We set
\be  \label{equ:tau:BB:K}
\tau_s = {\tau_s^{\LBB}}/{K}.
\ee
We call Algorithm \ref{alg:usvrg} with $\tau_s$ computed by \eqref{equ:tau:BB:K} as S-SVRG-BB algorithm.  \rblack{By rewriting \eqref{equ:tau:BB:K} as $\tau_s = {c \nu}/{\sqrt{\tilde L} L}K^{-\mu}$ with $c = \rho K^{\mu - 1}$ in which 
\be\label{equ:USVRGBB:rho}
\rho \nu/{\sqrt{\tilde L} L} \in\left[\tau_{\min}, \tau_{\max}\right],
\ee  we immediately obtain the following result for S-SVRG-BB from  Theorem \ref{theorem:complexity}. }
\begin{corollary} 
\rblack{Consider S-SVRG-BB algorithm. Given constant $0 \leq \mu \leq 2/3$, we select $K$, $|\Bsf|$, $\beta$ and $\tau_s$ by \eqref{equ:svrg:K:B:beta:tau}, 
where the positive constant $\kappa$ is chosen such that $c = \rho K^{\mu - 1}$ ($\rho$ satisfies \eqref{equ:USVRGBB:rho}) lies in $(0,1)$ and satisfies \eqref{equ:c}.}
To obtain a stochastic $\epsilon$-stationary point of \eqref{prob:manifold}, the IFO-calls and RO-calls complexities are $O(n/\epsilon)$ and $O(1/\epsilon)$, respectively.
\end{corollary}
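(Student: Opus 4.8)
The plan is to recognize S-SVRG-BB as an instance of Algorithm~\ref{alg:usvrg} run with the specific step size $\tau_s=\tau_s^{\LBB}/K$ of \eqref{equ:tau:BB:K}, and then to rerun the proof of Theorem~\ref{theorem:complexity} essentially verbatim, the only change being that the constant $c$ is now allowed to depend on the epoch index $s$. First I would record the effect of the safeguard: $\tau_s^{\LBB}\in[\tau_{\min},\tau_{\max}]$ for every $s$, so writing $\tau_s=\frac{c_s\nu}{\sqrt{\tilde{L}}L}K^{-\mu}$ forces $c_s=\rho_s K^{\mu-1}$ with $\rho_s:=\tau_s^{\LBB}\sqrt{\tilde{L}}L/\nu$ obeying $\rho_s\nu/(\sqrt{\tilde{L}}L)\in[\tau_{\min},\tau_{\max}]$, i.e.\ \eqref{equ:USVRGBB:rho}. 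Using $\frac{\mu-1}{3(1-\mu)}=-\frac13$ and $K\ge(\kappa n)^{1/(3(1-\mu))}$, one gets $0<c_s\le\bar c:=\frac{\tau_{\max}\sqrt{\tilde{L}}L}{\nu}(\kappa n)^{-1/3}$ for every $s$. Since $c\mapsto\frac{\hat L}{\sqrt{\tilde{L}}L}\exp(c^2+2c)c$ is increasing on $[0,\infty)$ and vanishes as $c\to0$, choosing $\kappa$ large enough that $\bar c\in(0,1)$ and $\frac{\hat L}{\sqrt{\tilde{L}}L}\exp(\bar c^2+2\bar c)\bar c\le1$ guarantees that every individual $c_s$ lies in $(0,1)$ and satisfies \eqref{equ:c}; this is precisely the hypothesis on $\kappa$ in the statement.

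Next I would invoke the per-epoch analysis. Fixing $s$ and conditioning on the randomness generated before epoch $s$ (so that $\tau_s$, hence $c_s$, is a deterministic scalar), Lemma~\ref{lemma:sufficient:decrease:epoch} applies with $\tau=\tau_s$, and since $c_s\in(0,1)$ satisfies \eqref{equ:c}, the computations leading to \eqref{equ:lower:bound:Delta:min} in the proof of Theorem~\ref{theorem:complexity} give $\Delta_{s,\min}=\Delta_{s,0}\ge\nu\tau_s/2$. As this holds for all $s$ and all realizations, and $\tau_s=\tau_s^{\LBB}/K\ge\tau_{\min}/K$, one obtains the deterministic bound $\Delta_{\min}=\min_s\Delta_{s,0}\ge\nu\tau_{\min}/(2K)$, i.e.\ $K\Delta_{\min}\ge\nu\tau_{\min}/2$. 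Substituting this into Lemma~\ref{theorem:svrg:grad:norm} (whose proof uses only the telescoping $X^{s+1,0}=X^{s,K}$, $\Ebb[f(X^{S,K})]\ge f(X^*)$, and $\sum_{k=0}^{K-1}\Delta_{s,k}\ge K\Delta_{\min}$, not that the $\tau_s$ coincide across epochs) with $p_{s,k}=\Delta_{s,k}/\sum_{k}\Delta_{s,k}$ and $p_{s,K}=0$ yields $\Ebb[\|\rgrad f(X_r)\|_{\Fsf}^2]\le\frac{f(X^{0,0})-f(X^*)}{SK\Delta_{\min}}\le\frac{2(f(X^{0,0})-f(X^*))}{S\nu\tau_{\min}}$.

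It then remains to count, exactly as in the last part of the proof of Theorem~\ref{theorem:complexity}. Taking $S=\big\lceil\frac{2(f(X^{0,0})-f(X^*))}{\nu\tau_{\min}\epsilon}\big\rceil=O(1/\epsilon)$ produces a stochastic $\epsilon$-stationary point. The IFO-calls count is $S(n+2K|\Bsf|)$, and the bound $n+2K|\Bsf|\le c_2 n$ from \eqref{equ:theorem:complexity:n:2KB} (which depends only on the choices in \eqref{equ:svrg:K:B:beta:tau}) gives $O(n/\epsilon)$. The RO-calls count is $2SK$; since $K=\lceil(\kappa n)^{1/(3(1-\mu))}\rceil$ does not depend on $\epsilon$, this is $O(1/\epsilon)$.

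The one genuinely new point — and the thing to be careful about — is that, unlike in Theorem~\ref{theorem:complexity}, the step sizes here are random and change from epoch to epoch, so the theorem cannot be cited with a single fixed $c$; the fix is the observation above that the safeguard confines every $c_s$ to $(0,\bar c]$, after which the monotonicity of the map in \eqref{equ:c} lets one verify admissibility once, at $\bar c$. It is also worth flagging why the resulting complexities differ from those of Theorem~\ref{theorem:complexity}: here $K\tau_s=\tau_s^{\LBB}\in[\tau_{\min},\tau_{\max}]$ is $\Theta(1)$ rather than $\Theta(n^{1/3})$, which is exactly what forces $S=O(1/\epsilon)$ (instead of $O(n^{-1/3}/\epsilon)$) and hence trades the improved $O(1/\epsilon)$ retraction count against the degraded $O(n/\epsilon)$ IFO count.
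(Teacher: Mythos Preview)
Your proof is correct and follows the same route as the paper: rewrite the safeguarded BB step size $\tau_s=\tau_s^{\LBB}/K$ in the form $\tau_s=\frac{c\nu}{\sqrt{\tilde L}L}K^{-\mu}$ with $c=\rho K^{\mu-1}$, then invoke the analysis behind Theorem~\ref{theorem:complexity}. In fact you are more careful than the paper, which simply states the rewriting and appeals to Theorem~\ref{theorem:complexity} ``immediately''; you make explicit that $c_s$ varies with $s$, bound it uniformly by $\bar c$ via monotonicity of the map in \eqref{equ:c}, verify that Lemma~\ref{theorem:svrg:grad:norm} does not require the $\tau_s$ to coincide across epochs, and correctly identify that $K\tau_s=\tau_s^{\LBB}\in[\tau_{\min},\tau_{\max}]=\Theta(1)$ is what changes $S$ from $O(n^{-1/3}/\epsilon)$ to $O(1/\epsilon)$ and hence yields the stated $O(n/\epsilon)$ IFO and $O(1/\epsilon)$ RO complexities.
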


\section{Numerical results} \label{section:numerical}

In this section, we compare VR-PCA \cite{shamir2015fast}, R-SVRG \cite{kasai2016riemannian} and SVRRG \cite{xu2016stochastic}
with our S-SVRG and S-SVRG-BB for solving PCA and matrix completion (MC) problems. Note that for problem \eqref{prob:manifold}, SMART-SVRG \cite{aravkin2016smart} is a special case of S-SVRG where the retraction is the gradient projection.  If we restrict $f(X)$ to be a quadratic function (without linear term) and
use the retraction of QR factorization or polar decomposition, our S-SVRG with fixed \rblack{step size} becomes Stiefel-SVRG \cite{weijie2016quadratic}. We consider seven types of retractions \eqref{equ:retraction:qr}-\eqref{equ:retraction:gr} and we denote them by ``qr'', ``pd'', ``wy'', ``jd'',  \rblack{``gp'', ``exp''} and ``gr'', respectively (see Appendix \ref{section:retraction:stiefel} and \ref{section:retraction:grass} for details). For each test instance, we run each method 20 times from random initial points. For each run, the initial random number generator seeds for different methods are the same. We stop each method at the $s$th epoch when $\|\rgrad\,f(X^{s,0})\|_{\Fsf}\leq 10^{-6}$ or $s$ is larger than or equal to the maximal epoch number 200. We always choose the batchsize $|\Bsf| = 0.01 n$ and set the maximal inner iteration number as \rblack{$K = 5n$}. For all the aforementioned SVRG-type methods, as done in \cite{johnson2013accelerating}, we use $K$ iterations of the S-SGD method to improve the quality of the random initial points. For PCA problem, we set $\rho = 0$ since there always hold $X^{\Tsf} \Gcal(X^{s,k}, \xi_{s,k}) \equiv \Gcal(X^{s,k}, \xi_{s,k})^{\Tsf} X$ and $X^{\Tsf} \nabla f(X) \equiv \nabla f(X)^{\Tsf} X$ and thus $\Dcal(X^{s,k}, \xi_{s,k}) \equiv  \Dbf_{0}(X^{s,k}, \Gcal(X^{s,k}, \xi_{s,k}))$ and $\rgrad\,f(X^s) \equiv \Dbf_{0}(X^s, \nabla f(X^s))$. For MC problem, we shall choose specific $\rho$ for different methods since $\Dcal(X^{s,k}, \xi_{s,k}) $ depends on $\rho$ although we always have  $\rgrad\,f(X^s) \equiv \Dbf_{0}(X^s, \nabla f(X^s))$. \rblack{The parameters $\tau_{\min}$ and $\tau_{\max}$ are chosen to be $10^{-8}$ and $10^8$, respectively.}
\rblack{In our numerical experiments, we use a new function $\phi(t)$ in `jd' retraction \eqref{equ:retraction:jd} as}
\be
\rblack{\phi(t) = \begin{cases} t/2, &\mbox{if $t < 10^{-10}$},\\ 1/2, &  \mbox{if $t \geq 10^{-10}$.}\end{cases}} \nn
\ee
Note that such chosen $\phi(t)$ satisfies the condition \eqref{equ:jd:phi} and it emphasis the role of $X^{\Tsf} E$ in \eqref{equ:retraction:jd} when $t$ is small. 

Our codes were written in MATLAB (\rblack{Release 2016b}) and all the experiments were conducted in \rblack{Ubuntu 16.04 LTS} on \rblack{a Dell workstation with a 3.5-GHz Intel Xeon E3-1240 v5 processor with access to 32 GB of RAM.} 

\subsection{Principal component analysis}

Given the observation data matrix $A\in\Rbb^{d\times n}$, the PCA problem can be formulated as
\be \label{prob:pca}
\max_{X \in \Rbb^{d \times r}}\  \frac{1}{n} \mtr(X^{\Tsf} (A - \mathbf{\bar A})(A - \mathbf{\bar A})^{\Tsf} X), \quad \st  \quad X^{\Tsf} X = I_r,
\ee
where $\mathbf{\bar A} =  \bar A \1_d^{\Tsf}$ with $\bar A = \frac1n \sum_{i=1}^n  A_i$ and $A_i$ being the $i$th column of $A$. 
It is easy to see that \eqref{prob:pca} is a special case of \eqref{prob:manifold} with $f_i(X) = -\mtr(X^{\Tsf} (A_i - \bar A)  (A_i - \bar A)^{\Tsf} X)$.

In our experiments, we generated $A$ using to the following MATLAB code:
\begin{verbatim}
        temp = [1:d].^(0.618);  A = randn(d,N);
        A = bsxfun(@times,temp',A);  A = A./max(max(abs(A)));
\end{verbatim}
 \rblack{We set $d = 1,000$, $n = 10,000$ and consider three choices of $r$: 10, 20, 40.} We first compare S-SVRG with seven retractions and existing methods R-SVRG, VR-PCA and SVRRG. For these methods, we report their performance with best-tuned step sizes chosen from the set $\{1, 2, \ldots, 100\}$. The results are reported in Table \ref{table:pca:rsvrg:best:tuned}. In this table, ``retr.'' denotes the type of the retraction,  ``\,$\tau^*$'' denotes the best-tuned \rblack{step size}, ``epoch'' gives the minimal, average and maximal number of epoches and the standard deviation. The term ``\,$\mathrm{\overline{nrm}}$'' denotes the average $\mathsf{F}$-norm of Riemannian gradient at the point returned by each method while ``\,$\mathrm{\overline{err}}$'' is the average relative function value error to the optimal value. The average CPU time ``\,$\mathrm{\overline{t}}$'', evaluated by the tic-toc commands, is in seconds. We have the following observations from Table \ref{table:pca:rsvrg:best:tuned}. For the best-tuned \rblack{step sizes}, our S-SVRG is always faster than R-SVRG and SSVRG while the quality of the solution is similar. The performance of S-SVRG with the retraction of `gp', (i.e., SMART-SVRG), is better than VR-PCA which also adopts the retraction of `gp' but with additional twist procedures. In terms of the average number of epochs, S-SVRG with `gp' is always the worst one among the seven tested retractions. 

\begin{table}[!htbp]
\caption{Comparison of S-SVRG and existing methods for PCA instances: best-tuned \rblack{step size}, $d = 1000, n= 10000$. }\label{table:pca:rsvrg:best:tuned}
\vspace{2pt}
\centering
\begin{footnotesize} 
\rblack{
\begin{tabular}{|@{\hspace{0.9mm}}c@{\hspace{1mm}}c@{\hspace{0mm}}
|c@{\hspace{1.5mm}}c@{\hspace{1.5mm}}l@{\hspace{1.5mm}}c@{\hspace{1.5mm}}c@{\hspace{1.5mm}}r@{\hspace{0.2mm}}
c@{\hspace{2mm}}|c@{\hspace{1.5mm}}l@{\hspace{1.5mm}}c@{\hspace{1.5mm}}c@{\hspace{1.5mm}}r@{\hspace{0.9mm}}|
}
\hline
 && \multicolumn{6}{c}{Existing methods} && \multicolumn{5}{c|}{S-SVRG} \\
  \cline{3-9} \cline{10-14}
 \Gape[6pt]  retr. && method & $\tau^*$ & epoch & $\mathrm{\overline{nrm}}$ & $\mathrm{\overline{err}}$& $\mathrm{\overline{t}}$ &&
   $\tau^*$ & epoch & $\mathrm{\overline{nrm}}$ & $\mathrm{\overline{err}}$& $\mathrm{\overline{t}}$\\
\Xhline{0.6pt}
\hline\hline
&&\multicolumn{12}{@{}c|}{$r = 10$} \\ 
   exp &&  RSVRG & 1.2 & (39,\,52.9,\,74,\,8.2) & 9e-07 & 7e-11 & 25.2&& 1.2 & (41,\,52.9,\,72,\,8.8) & 9e-07 & 8e-11 & 11.2\\ 
    pd &&  SVRRG & 1.2 & (42,\,53.1,\,70,\,7.7) & 9e-07 & 7e-11 & 11.5&& 1.2 & (41,\,53.0,\,73,\,8.8) & 9e-07 & 7e-11 & 8.7\\ 
    gp && VR-PCA & 1.3 & (42,\,53.1,\,67,\,8.3) & 9e-07 & 7e-11 & 10.4&& 1.3 & (39,\,53.4,\,73,\,10.1) & 9e-07 & 7e-11 & 7.8\\ 
\hdashline[2pt/2pt]
    qr && --- & --- & --- & --- & --- & ---&& 1.1 & (32,\,52.9,\,70,\,10.9) & 9e-07 & 1e-10 & 8.2\\ 
    wy && --- & --- & --- & --- & --- & ---&& 1.2 & (41,\,52.8,\,72,\,8.8) & 9e-07 & 8e-11 & 9.8\\ 
    jd && --- & --- & --- & --- & --- & ---&& 1.2 & (41,\,52.8,\,72,\,8.8) & 9e-07 & 8e-11 & 9.9\\ 
   gr && --- & --- & --- & --- & --- & ---&& 0.6 & (48,\,55.0,\,76,\,6.8) & 9e-07 & 8e-11 & 8.1\\ 
\hline \hline 
&&\multicolumn{12}{@{}c|}{$r = 20$} \\ 
   exp &&  RSVRG & 1.3 & (73,\,110.5,\,159,\,19.4) & 9e-07 & 5e-11 & 100.0&& 1.4 & (67,\,106.2,\,156,\,18.5) & 9e-07 & 4e-11 & 37.3\\ 
    pd &&  SVRRG & 1.3 & (71,\,113.8,\,172,\,22.4) & 9e-07 & 4e-11 & 35.0&& 1.3 & (80,\,107.2,\,128,\,12.6) & 9e-07 & 5e-11 & 23.3\\ 
    gp && VR-PCA & 1.4 & (80,\,109.3,\,153,\,18.9) & 9e-07 & 5e-11 & 30.8&& 1.4 & (70,\,107.3,\,139,\,17.3) & 9e-07 & 5e-11 & 20.6\\ 
\hdashline[2pt/2pt]
    qr && --- & --- & --- & --- & --- & ---&& 1.3 & (71,\,106.3,\,138,\,18.0) & 9e-07 & 5e-11 & 21.6\\ 
    wy && --- & --- & --- & --- & --- & ---&& 1.4 & (66,\,106.1,\,151,\,17.9) & 9e-07 & 4e-11 & 27.9\\ 
    jd && --- & --- & --- & --- & --- & ---&& 1.4 & (66,\,106.1,\,151,\,17.9) & 9e-07 & 5e-11 & 24.4\\ 
   gr && --- & --- & --- & --- & --- & ---&& 0.7 & (54,\,101.8,\,139,\,24.0) & 9e-07 & 5e-11 & 18.5\\ 
\hline \hline 
&&\multicolumn{12}{@{}c|}{$r = 40$}  \\ 
   exp &&  RSVRG & 1.3 & (82,\,104.7,\,127,\,13.4) & 9e-07 & 2e-11 & 182.8&& 1.4 & (75,\,104.2,\,126,\,13.6) & 9e-07 & 2e-11 & 63.0\\ 
    pd &&  SVRRG & 1.3 & (85,\,103.8,\,131,\,14.0) & 9e-07 & 2e-11 & 46.1&& 1.4 & (71,\,101.8,\,125,\,16.6) & 9e-07 & 2e-11 & 34.9\\ 
    gp && VR-PCA & 1.4 & (73,\,106.3,\,164,\,23.7) & 9e-07 & 3e-11 & 49.0&& 1.2 & (78,\,111.2,\,138,\,18.9) & 9e-07 & 4e-11 & 35.7\\ 
\hdashline[2pt/2pt]
    qr && --- & --- & --- & --- & --- & ---&& 1.3 & (80,\,98.2,\,129,\,14.0) & 9e-07 & 3e-11 & 32.6\\ 
    wy && --- & --- & --- & --- & --- & ---&& 1.4 & (74,\,103.9,\,126,\,13.9) & 9e-07 & 2e-11 & 46.5\\ 
    jd && --- & --- & --- & --- & --- & ---&& 1.4 & (74,\,103.9,\,126,\,13.9) & 9e-07 & 3e-11 & 38.4\\ 
   gr && --- & --- & --- & --- & --- & ---&& 0.7 & (72,\,103.1,\,137,\,18.8) & 9e-07 & 3e-11 & 27.9\\ 
\hline \hline 
&&\multicolumn{12}{@{}c|}{$r = 60$} \\ 
   exp &&  RSVRG & 1.2 & (78,\,100.7,\,148,\,17.0) & 9e-07 & 2e-11 & 281.2&& 1.3 & (65,\,88.3,\,109,\,12.8) & 9e-07 & 3e-11 & 82.7\\ 
    pd &&  SVRRG & 1.2 & (74,\,95.4,\,120,\,12.6) & 9e-07 & 2e-11 & 63.9&& 1.3 & (61,\,86.3,\,109,\,14.0) & 9e-07 & 3e-11 & 40.4\\ 
    gp && VR-PCA & 1.4 & (67,\,95.0,\,143,\,19.4) & 9e-07 & 2e-11 & 66.3&& 1.3 & (70,\,99.0,\,130,\,17.2) & 9e-07 & 2e-11 & 41.4\\ 
\hdashline[2pt/2pt]
    qr && --- & --- & --- & --- & --- & ---&& 1.4 & (77,\,95.5,\,118,\,11.9) & 9e-07 & 2e-11 & 44.2\\ 
    wy && --- & --- & --- & --- & --- & ---&& 1.3 & (65,\,87.7,\,109,\,13.2) & 9e-07 & 2e-11 & 68.7\\ 
    jd && --- & --- & --- & --- & --- & ---&& 1.3 & (65,\,87.7,\,109,\,13.2) & 9e-07 & 3e-11 & 49.9\\ 
   gr && --- & --- & --- & --- & --- & ---&& 0.7 & (64,\,88.7,\,113,\,12.7) & 9e-07 & 2e-11 & 35.6\\ 
\hline \hline \Xhline{0.6pt}
\end{tabular}
}
\end{footnotesize}
\end{table}


To investigate the efficiency of S-SVRG-BB, we compare the performance of S-SVRG-BB and S-SVRG with best-tuned \rblack{step sizes}.
The comparison results are reported in Table \ref{table:pca:rsvrg:bb}. From this table, we see that the S-SVRG-BB with the retraction `jd' performs best, while the S-SVRG-BB with the retraction `exp' performs worst, but all of them are comparable with S-SVRG with best-tuned step sizes.

\begin{table}[!htbp]
\caption{Comparison of S-SVRG with best-tuned \rblack{step sizes} and S-SVRG-BB for PCA instances: $d = 1000, n = 10000$.}\label{table:pca:rsvrg:bb}
\vspace{2pt}
\centering
\begin{footnotesize} 
\rblack{
\begin{tabular}{|@{\hspace{0.9mm}}c@{\hspace{1mm}}
c@{\hspace{0mm}}|c@{\hspace{1.5mm}}l@{\hspace{1.5mm}}c@{\hspace{1.5mm}}c@{\hspace{1.5mm}}r@{\hspace{0.2mm}}
c@{\hspace{2mm}}|l@{\hspace{1.5mm}}c@{\hspace{1.5mm}}c@{\hspace{1.5mm}}r@{\hspace{1.5mm}}r@{\hspace{0.9mm}}|
}
\hline
 && \multicolumn{5}{c}{S-SVRG} && \multicolumn{5}{c|}{S-SVRG-BB} \\
 \cline{3-8} \cline{9-13}
 \Gape[6pt]  retr. && $\tau^*$ & epoch & $\mathrm{\overline{nrm}}$ & $\mathrm{\overline{err}}$& $\mathrm{\overline{t}}$ &&
    epoch & $\mathrm{\overline{nrm}}$ & $\mathrm{\overline{err}}$& $\mathrm{\overline{t}}$& $\mathrm{\overline{t}ratio}$\\
\Xhline{0.6pt}
\hline \hline
&&\multicolumn{11}{@{}c|}{$r = 10$} \\ 
   exp &&  1.2 & (41,\,52.9,\,72,\,8.8) & 9e-07 & 8e-11 & 11.2&& (77,\,102.7,\,153,\,18.0) & 9e-07 & 4e-09 & 21.9 &2.0\\ 
    pd &&  1.2 & (41,\,53.0,\,73,\,8.8) & 9e-07 & 7e-11 & 8.7&& (77,\,102.8,\,153,\,18.2) & 9e-07 & 4e-09 & 16.9 &1.9\\ 
    qr &&  1.1 & (32,\,52.9,\,70,\,10.9) & 9e-07 & 1e-10 & 8.2&& (77,\,103.2,\,153,\,18.2) & 9e-07 & 4e-09 & 16.1 &2.0\\ 
    wy &&  1.2 & (41,\,52.8,\,72,\,8.8) & 9e-07 & 8e-11 & 9.8&& (77,\,102.7,\,153,\,18.0) & 9e-07 & 4e-09 & 19.0 &1.9\\ 
    jd &&  1.2 & (41,\,52.8,\,72,\,8.8) & 9e-07 & 8e-11 & 9.9&& (77,\,102.7,\,153,\,18.0) & 9e-07 & 4e-09 & 19.3 &2.0\\ 
    gp &&  1.3 & (39,\,53.4,\,73,\,10.1) & 9e-07 & 7e-11 & 7.8&& (77,\,103.5,\,153,\,17.9) & 9e-07 & 4e-09 & 15.0 &1.9\\ 
   gr &&  0.6 & (48,\,55.0,\,76,\,6.8) & 9e-07 & 8e-11 & 8.1&& (52,\,75.9,\,118,\,15.7) & 9e-07 & 2e-09 & 11.2 &1.4\\ 
\hline \hline 
&&\multicolumn{11}{@{}c|}{$r = 20$} \\ 
   exp &&  1.4 & (67,\,106.2,\,156,\,18.5) & 9e-07 & 4e-11 & 37.3&& (133,\,204.5,\,301,\,45.6) & 9e-07 & 5e-09 & 72.2 &1.9\\ 
    pd &&  1.3 & (80,\,107.2,\,128,\,12.6) & 9e-07 & 5e-11 & 23.3&& (133,\,204.3,\,300,\,45.1) & 1e-06 & 5e-09 & 44.2 &1.9\\ 
    qr &&  1.3 & (71,\,106.3,\,138,\,18.0) & 9e-07 & 5e-11 & 21.6&& (137,\,207.7,\,304,\,47.0) & 1e-06 & 5e-09 & 42.6 &2.0\\ 
    wy &&  1.4 & (66,\,106.1,\,151,\,17.9) & 9e-07 & 4e-11 & 27.9&& (133,\,204.3,\,300,\,45.4) & 1e-06 & 5e-09 & 53.5 &1.9\\ 
    jd &&  1.4 & (66,\,106.1,\,151,\,17.9) & 9e-07 & 5e-11 & 24.4&& (133,\,204.3,\,300,\,45.4) & 1e-06 & 5e-09 & 47.3 &1.9\\ 
    gp &&  1.4 & (70,\,107.3,\,139,\,17.3) & 9e-07 & 5e-11 & 20.6&& (138,\,208.8,\,307,\,47.6) & 9e-07 & 5e-09 & 39.8 &1.9\\ 
   gr &&  0.7 & (54,\,101.8,\,139,\,24.0) & 9e-07 & 5e-11 & 18.5&& (83,\,146.6,\,236,\,39.9) & 9e-07 & 3e-09 & 26.7 &1.4\\ 
\hline \hline 
&&\multicolumn{11}{@{}c|}{$r = 40$} \\ 
   exp &&  1.4 & (75,\,104.2,\,126,\,13.6) & 9e-07 & 2e-11 & 63.0&& (126,\,172.4,\,242,\,35.6) & 9e-07 & 2e-09 & 105.3 &1.7\\ 
    pd &&  1.4 & (71,\,101.8,\,125,\,16.6) & 9e-07 & 2e-11 & 34.9&& (126,\,172.4,\,242,\,35.7) & 9e-07 & 2e-09 & 58.9 &1.7\\ 
    qr &&  1.3 & (80,\,98.2,\,129,\,14.0) & 9e-07 & 3e-11 & 32.6&& (126,\,172.6,\,243,\,35.7) & 9e-07 & 2e-09 & 56.7 &1.7\\ 
    wy &&  1.4 & (74,\,103.9,\,126,\,13.9) & 9e-07 & 2e-11 & 46.5&& (126,\,172.2,\,242,\,35.5) & 1e-06 & 2e-09 & 77.7 &1.7\\ 
    jd &&  1.4 & (74,\,103.9,\,126,\,13.9) & 9e-07 & 3e-11 & 38.4&& (126,\,172.2,\,242,\,35.5) & 1e-06 & 3e-09 & 64.1 &1.7\\ 
    gp &&  1.2 & (78,\,111.2,\,138,\,18.9) & 9e-07 & 4e-11 & 35.7&& (127,\,173.8,\,268,\,37.5) & 9e-07 & 2e-09 & 55.9 &1.6\\ 
   gr &&  0.7 & (72,\,103.1,\,137,\,18.8) & 9e-07 & 3e-11 & 27.9&& (101,\,140.5,\,215,\,30.0) & 9e-07 & 1e-09 & 37.3 &1.3\\ 
\hline \hline 
&&\multicolumn{11}{@{}c|}{$r = 60$} \\ 
   exp &&  1.3 & (65,\,88.3,\,109,\,12.8) & 9e-07 & 3e-11 & 82.7&& (130,\,163.9,\,258,\,31.7) & 1e-06 & 2e-09 & 155.4 &1.9\\ 
    pd &&  1.3 & (61,\,86.3,\,109,\,14.0) & 9e-07 & 3e-11 & 40.4&& (130,\,164.2,\,255,\,31.4) & 1e-06 & 2e-09 & 76.5 &1.9\\ 
    qr &&  1.4 & (77,\,95.5,\,118,\,11.9) & 9e-07 & 2e-11 & 44.2&& (130,\,164.3,\,254,\,31.3) & 9e-07 & 2e-09 & 76.2 &1.7\\ 
    wy &&  1.3 & (65,\,87.7,\,109,\,13.2) & 9e-07 & 2e-11 & 68.7&& (130,\,164.1,\,258,\,31.5) & 1e-06 & 2e-09 & 129.0 &1.9\\ 
    jd &&  1.3 & (65,\,87.7,\,109,\,13.2) & 9e-07 & 3e-11 & 49.9&& (130,\,164.1,\,258,\,31.5) & 1e-06 & 2e-09 & 93.5 &1.9\\ 
    gp &&  1.3 & (70,\,99.0,\,130,\,17.2) & 9e-07 & 2e-11 & 41.4&& (133,\,168.2,\,272,\,33.1) & 9e-07 & 2e-09 & 70.5 &1.7\\ 
   gr &&  0.7 & (64,\,88.7,\,113,\,12.7) & 9e-07 & 2e-11 & 35.6&& (94,\,130.6,\,169,\,24.8) & 9e-07 & 8e-10 & 52.1 &1.5\\ 
\hline \hline 
\Xhline{0.6pt}
\end{tabular}
}
\end{footnotesize}
\end{table}

Finally, we plot the relative function value $(f(X^{s,0}) - f^*)/|f^*|$ for S-SVRG and S-SVRG-BB with the retraction `jd' in Figure \ref{figure:usvrg-bb-jd}, where $f^*$ is the optimal function value. From this figure, we see that both S-SVRG and S-SVRG-BB converge linearly, which is consistent with the linear convergence result shown in Theorem \ref{lemma:local:linear}. We have similar observation  for S-SVRG and S-SVRG-BB with other retractions and we omit the figures here for brevity.

\begin{figure}
\centering
\subfigure[]{\label{fig:qr_grad_kappa_1}
\centering
\includegraphics[width=2.4in]{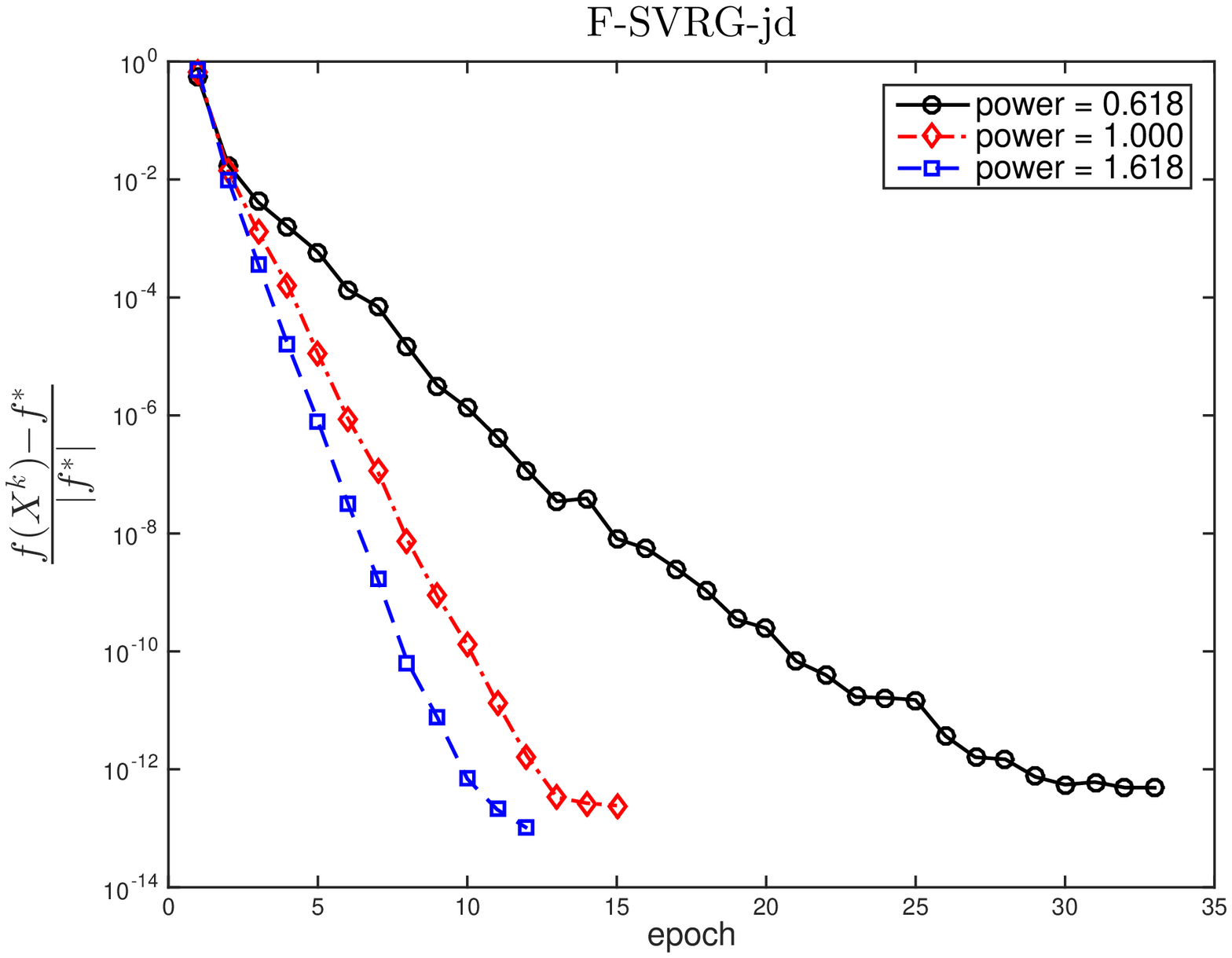}
}
\subfigure[]{\label{fig:qr_funVal_kappa_1}
\centering
\includegraphics[width=2.4in]{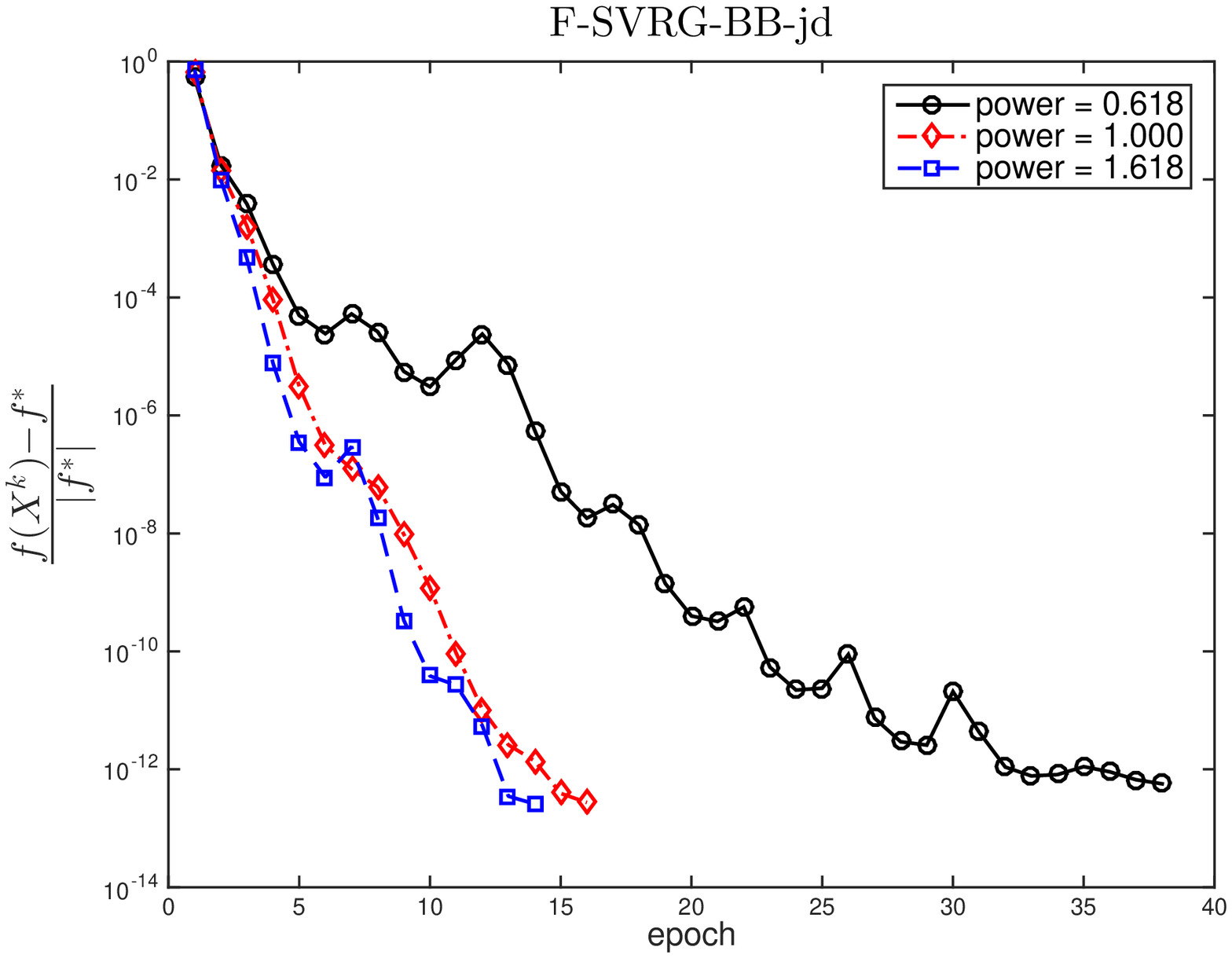}
}
\caption{The relative function value versus the number of epochs for S-SVRG and S-SVRG-BB with the retraction `jd'}
\label{figure:usvrg-bb-jd}
\end{figure}

\subsection{Matrix completion}
Let $\Omega \in \{1, \ldots, n\} \times \{1, \ldots, n\}$. For the rank-$r$ matrix $M \in \Rbb^{d \times n}$, we define the projection matrix $\proj_{\Omega}(M)$ as $\proj_{\Omega}(M)_{ij} = M_{ij}$ if $(i,j) \in \Omega$ and $\proj_{\Omega}(M)_{ij} = 0$ otherwise.
Given the observation $\proj_{\Omega}(M)$, we aim to recover missing values of $M$ by solving the following matrix completion problem \cite{kasai2016riemannian}
\be\label{prob:MC:2}
\min_{X \in \grass , a_i \in \Rbb^r} \  \frac{1}{n} \sum_{i = 1}^n \|\mathcal{P}_{\Omega_i} (X a_i) - \mathcal{P}_{\Omega_i} (M_i)\|^2,
\ee
where $M_i$ is the $i$th column of $M$, and $\proj_{\Omega_i}(\cdot)$ is the $i$th column of $\proj_{\Omega}(\cdot)$. It is easy to see that \eqref{prob:pca} is a special case of \eqref{prob:manifold} with $f_i(X) = \min_{a_i \in \Rbb^r}\  \|\mathcal{P}_{\Omega_i} (X a_i) - \mathcal{P}_{\Omega_i} (M_i)\|^2$.

We generated the synthetic data matrix $M$ as suggested in \cite{kasai2016riemannian} with the condition number set to 10. The index set $\Omega$ is chosen randomly and uniformly from $\{1, \ldots, n\} \times \{1, \ldots, n\}$, and its size is $|\Omega| = (n + d - r)r^2$.
Since the best-tuned step sizes are not easy to obtain and S-SVRG-BB has proved to be very practical, we only report the results of S-SVRG-BB. To make a fair comparison, we also adopted the BB \rblack{step size} \eqref{equ:BB} for the existing methods R-SVRG, SVRRG and VR-PCA. The corresponding methods with BB step size are named as R-SVRG-BB, SVRRG-BB and VR-PCA-BB, respectively. The method SMART-SVRG-BB stands for the SMART-SVRG with BB step size, which is essentially S-SVRG-BB with `gp' retraction. The numerical results over \rblack{20} runs are reported in Table \ref{table:mc:rsvrg:transport:new}. The term `$\mathrm{\bar t}$ratio' denotes the ratio of running time of each method over the minimal running time of R-SVRG-BB,  SVRRG-BB, VR-PCA-BB and SMART-SVRG-BB. For instance, when \rblack{$r=20$}, $d=1000$, $n=10000$, `$\mathrm{\bar t}$ratio' for S-SVRG-BB-jd is \rblack{$0.83$}, which means the CPU time of S-SVRG-BB-jd is only \rblack{$0.83$} times of that of SVRRG-BB. From Table \ref{table:mc:rsvrg:transport:new} we see that using appropriate retraction, S-SVRG-BB can be faster than the four existing methods.  It should be noted that since $\Rscr_{\mathrm{gr}}'(0) = -2 \Dbf_0(X, \nabla f(X))$ (see \eqref{equ:retraction:gr}), the BB step size for `gr' is essentially enlarged by resetting $\tau_s^{\LBB} \coloneqq 2 \cdot {\langle \mathsf{S}^s, \mathsf{S}^s\rangle}/{|\langle \mathsf{S}^s, \mathsf{Y}^s\rangle|}$. We can also enlarge the BB step size for  other retractions, and we observe that the performance is always improved.  However, for sake of space, we shall not report the corresponding results.

\begin{table}[!htbp]
\caption{Comparison of R-SVRGs' for  matrix completion instances: $d = 1000, n = 10000$}\label{table:mc:rsvrg:transport:new}
\vspace{2pt}
\centering
\begin{footnotesize} 
\begin{tabular}{|@{\hspace{0.5mm}}l@{\hspace{1mm}}|c@{\hspace{.2mm}}
l@{\hspace{1.5mm}}c@{\hspace{1.7mm}}c@{\hspace{1.7mm}}c@{\hspace{1.5mm}}c@{\hspace{0.2mm}}
c@{\hspace{1.2mm}}
|l@{\hspace{1.5mm}}c@{\hspace{1.7mm}}c@{\hspace{1.7mm}}c@{\hspace{1.5mm}}c@{\hspace{0.5mm}}|}
\hline
 \Gape[6pt]  method && epoch & $\mathrm{\overline{nrm}}$ & $\mathrm{\overline{err}}$& $\mathrm{\overline{t}}$ & $\mathrm{\overline{t}ratio}$ &&
   epoch & $\mathrm{\overline{nrm}}$ & $\mathrm{\overline{err}}$& $\mathrm{\overline{t}}$ & $\mathrm{\overline{t}ratio}$ \\
\Xhline{0.6pt} \hline \hline
&&\multicolumn{5}{@{}c}{$r = 10$} && \multicolumn{5}{@{}c|}{$r = 15$}\\ 
R-SVRG-BB && (22, \,28.6, \,75, \,11.6) & 8e-07 & 4e-11 & 222& 1.21&& (21, \,24.4, \,33, \,3.9) & 7e-07 & 5e-11 & 244 & 1.09\\ 
SVRRG-BB && (22, \,24.7, \,31, \,2.7) & 8e-07 & 5e-11 & 184& \textbf{1.00}&& (20, \,23.6, \,29, \,2.4) & 7e-07 & 6e-11 & 223 & \textbf{1.00}\\ 
VR-PCA-BB && (22, \,26.1, \,47, \,5.9) & 8e-07 & 5e-11 & 194& 1.05&& (21, \,23.8, \,31, \,2.7) & 7e-07 & 6e-11 & 227 & 1.02\\ 
SMART-SVRG-BB && (22, \,26.6, \,40, \,5.7) & 7e-07 & 5e-11 & 198& 1.07&& (21, \,23.7, \,31, \,2.7) & 7e-07 & 6e-11 & 224 & 1.00\\ 
\hdashline[2pt/2pt]
S-SVRG-BB-exp && (22, \,28.4, \,62, \,10.0) & 8e-07 & 4e-11 & 214& 1.16&& (21, \,23.9, \,32, \,3.0) & 7e-07 & 6e-11 & 229 & 1.02\\ 
S-SVRG-BB-pd && (22, \,25.2, \,40, \,4.2) & 7e-07 & 4e-11 & 188& 1.02&& (20, \,23.7, \,31, \,2.6) & 7e-07 & 6e-11 & 224 & 1.00\\ 
S-SVRG-BB-qr && (22, \,28.6, \,48, \,8.2) & 7e-07 & 4e-11 & 214& 1.16&& (20, \,24.9, \,36, \,4.8) & 7e-07 & 6e-11 & 235 & 1.05\\ 
S-SVRG-BB-wy && (22, \,25.7, \,37, \,4.5) & 7e-07 & 4e-11 & 192& 1.05&& (20, \,23.8, \,31, \,2.8) & 7e-07 & 6e-11 & 226 & 1.01\\ 
S-SVRG-BB-jd && (19, \,25.2, \,42, \,5.8) & 7e-07 & 4e-11 & 188& 1.02&& (18, \,21.4, \,29, \,2.6) & 7e-07 & 6e-11 & 203 & \textbf{0.91}\\ 
S-SVRG-BB-gr && (17, \,23.5, \,38, \,5.6) & 6e-07 & 8e-11 & 174& \textbf{0.95}&& (17, \,21.0, \,31, \,4.3) & 5e-07 & 5e-11 & 197 & \textbf{0.88}\\ 
\hline \hline 
&&\multicolumn{5}{@{}c}{$r = 20$} && \multicolumn{5}{@{}c|}{$r = 25$}\\ 
R-SVRG-BB && (22, \,24.3, \,35, \,2.9) & 7e-07 & 5e-11 & 285& 1.08&& (22, \,23.7, \,27, \,1.3) & 7e-07 & 6e-11 & 329 & 1.06\\ 
SVRRG-BB && (21, \,23.9, \,29, \,1.7) & 7e-07 & 5e-11 & 265& \textbf{1.00}&& (22, \,23.9, \,26, \,1.0) & 7e-07 & 6e-11 & 311 & 1.00\\ 
VR-PCA-BB && (22, \,24.4, \,35, \,3.3) & 7e-07 & 5e-11 & 272& 1.03&& (22, \,23.8, \,27, \,1.4) & 7e-07 & 6e-11 & 311 & \textbf{1.00}\\ 
SMART-SVRG-BB && (22, \,24.4, \,35, \,3.3) & 7e-07 & 5e-11 & 269& 1.02&& (22, \,23.8, \,27, \,1.4) & 7e-07 & 6e-11 & 307 & \textbf{0.99}\\ 
\hdashline[2pt/2pt]
S-SVRG-BB-exp && (22, \,24.8, \,38, \,4.3) & 7e-07 & 5e-11 & 277& 1.05&& (22, \,23.7, \,27, \,1.3) & 7e-07 & 6e-11 & 313 & 1.01\\ 
S-SVRG-BB-pd && (21, \,23.9, \,29, \,1.7) & 7e-07 & 5e-11 & 264& \textbf{1.00}&& (22, \,23.9, \,26, \,1.0) & 7e-07 & 6e-11 & 311 & 1.00\\ 
S-SVRG-BB-qr && (21, \,24.2, \,34, \,2.6) & 7e-07 & 5e-11 & 267& 1.01&& (22, \,23.9, \,26, \,1.0) & 7e-07 & 6e-11 & 310 & \textbf{1.00}\\ 
S-SVRG-BB-wy && (21, \,24.1, \,33, \,2.4) & 7e-07 & 5e-11 & 268& 1.01&& (22, \,23.9, \,26, \,1.0) & 7e-07 & 6e-11 & 312 & 1.00\\ 
S-SVRG-BB-jd && (18, \,19.9, \,23, \,1.7) & 7e-07 & 5e-11 & 220& \textbf{0.83}&& (18, \,19.8, \,25, \,1.5) & 6e-07 & 6e-11 & 257 & \textbf{0.83}\\ 
S-SVRG-BB-gr && (17, \,19.5, \,29, \,3.1) & 5e-07 & 5e-11 & 214& \textbf{0.81}&& (17, \,18.1, \,22, \,1.6) & 6e-07 & 7e-11 & 232 & \textbf{0.75}\\ 
\hline \hline \Xhline{0.6pt}
\end{tabular}
\end{footnotesize}
\end{table}

\section{Conclusions}\label{section:conclusion}

In this paper, we proposed a vector transport-free SVRG with general retraction for solving empirical risk minimization over manifold. 
Our S-SVRG method has several important features: (i) it can tackle general nonlinear function; (ii) it works for a variety of retractions; (iii) it formulates the unbiased and variance reduced stochastic Riemannian gradient in a simple way, without  any  additional costs such as parallel \rblack{or} vector transport. We proved that the iteration complexity of S-SVRG for obtaining a stochastic $\epsilon$-stationary point is $O(n^{2/3}/\epsilon)$, which is far less than that of Riemannian gradient descent method. With the help of \L ojasiewicz inequality, we established the linear convergence of S-SVRG. Moreover, we incorporated the BB step size to S-SVRG, and obtained a very practical S-SVRG-BB method. Numerical results on PCA and matrix completion problems showed the efficiency of the proposed methods.

\section*{Acknowledgments}
We thank Bamdev Mishra, Hiroyuki Kasai and Hiroyuki Sato for sharing their codes ``Riemannian\_svrg''.

\appendix

\section*{Appendix}

\section{Retractions on $\orth$} \label{section:retractions}
In this section, we review several retractions on $\stief$ and $\grass$.
Note that the tangent direction of the retractions of the  gradient projection and gradient reflection  are fixed, while other retractions have freedom to choose different directions. For \rblack{a} comparison of the computational cost of different retractions, see \cite{jiang2015framework, gao2016new}. 

\subsection{Retractions on $\stief$}\label{section:retraction:stiefel}
Given $X \in \stief$ and $E \in \Tbf_X \stief$, we next introduce six retractions along the direction $E$. 

\begin{enumerate}[(i)]
\item   The exponential retraction, also known as geodesic,  in \cite{edelman1998geometry}  is given as
\be
\Rscr_{\mathrm{exp1}}(X,tE) =\left[\begin{matrix}X & \mathrm{qr}(D) \end{matrix}\right] \exp \left(t \begin{bmatrix}X^{\Tsf}E &  -\mathrm{upp}(D)^{\Tsf}\\ \mathrm{upp}(D) & 0\end{bmatrix} \right)
\begin{bmatrix} I_r \\ 0\end{bmatrix},  \nn
\ee
where  $D = (I_d - XX^{\Tsf})E$ and $D = \mathrm{qr}(D) \mathrm{upp}(D)$ is the QR factorization of $D$ with $\mathrm{qr}(D) \in \orth$ and $\mathrm{upp}(D)$ being upper triangular with positive diagonal entries. 
\item The retraction of QR factorization \cite{absil2008optimization} is given as 
\be\label{equ:retraction:qr}
\Rscr_{\mathrm{qr}}(X,tE) =  \mbox{qr}(X + t E).
\ee 
\item The retraction of polar decomposition takes the form as
 \cite{manton2002optimization, absil2008optimization} 
 \be\label{equ:retraction:pd}
  \Rscr_{\mathrm{pd}}(X,tE) = \Pcal_{\orth}(X + t E),
\ee
where the projection operation $\Pcal_{\orth}(\tilde X) = \tilde U \tilde V^{\Tsf}$, where $ \tilde U \tilde \Sigma \tilde V^{\Tsf}$ is the compact SVD of $\tilde X$.  \rblack{If $\tilde X$ is full column rank, such as when $\tilde X = X + tE$, $\Pcal_{\orth}(\tilde X) = \tilde X(\tilde X^{\Tsf}\tilde X)^{-1/2}$.}

\item  Recently, based on the Cayley transformation, Wen and Yin \cite{wen2013feasible} developed a simple and efficient retraction  as \footnote{\rblack{It follows from Proposition 3.1 in \cite{jiang2015framework} that \eqref{equ:retraction:wy} is well-defined if $I_r + \frac{t}{4} X^{\Tsf} E$ is  invertible. Note that this holds naturally because  $X^{\Tsf} E$ is skew-symmetric.}}  
\be\label{equ:retraction:wy}
  \Rscr_{\mathrm{wy}}(X,tE)=X-  t U \Big(I_{2p}+\frac{t}{2}V^{\Tsf}U\Big)^{-1}V^{\Tsf}X,
\ee
where $U=[\begin{matrix}-P_X E, X\end{matrix}]$, $V=[\begin{matrix}X,\, P_XE\end{matrix}]$ with $P_X = I_d - \frac12 XX^{\Tsf}$. 
\item Later on, in the point view of subspace, Jiang and Dai \cite{jiang2015framework} proposed a family of retractions. For the generalized exponential retraction, generalized retraction of polar decomposition or QR factorization, see (8.2) - (8.4) in \cite{jiang2015framework}. Besides, \cite{jiang2015framework} also proposed a new efficient retraction as 
\be \label{equ:retraction:jd}
  \begin{cases}
\Rscr_{\mathrm{jd}}(X,tE) = (2 X + t D)J(t)^{-1} - X, \\ 
J(t) =  I_r + \frac{t^2}{4} D^{\Tsf}D - \phi(t) X^{\Tsf} E,
\end{cases} \vspace{0.2em}
\ee
 where $\phi(t)$ is any function satisfying  
 \be \label{equ:jd:phi}
 \phi(0) = 0, \quad \mbox{and}  \quad \phi'(0) = \frac12.
 \ee
   When taking $\phi(t) = \frac12 t$, \cite{jiang2015framework} showed that \eqref{equ:retraction:jd} and \eqref{equ:retraction:wy} are equivalent. 
\item Finally,  the ordinary gradient projection retraction  \cite{jiang2015framework, gao2016new} is given as 
\be \label{equ:retraction:gp}
\Rscr_{\mathrm{gp}}(X, -t \Dbf_{1/4}(X, \nabla f(X))) = \Pcal_{\orth}(X - t  \nabla f(X)).
\ee
Note that $\Rscr_{\mathrm{gp}}'(0) = -\Dbf_{1/4}(X, \nabla f(X))$ instead of \rblack{any} $E$.
\end{enumerate}

\subsection{Retractions on $\grass$}\label{section:retraction:grass}
Given $X \in \grass$ and $E \in \Tbf_X \grass$, the exponential retraction proposed in \cite{absil2008optimization} is  
\be\label{equ:retraction:exp-grass}
\Rscr_{\mathrm{exp2}}(X,tE) =  \big(X \hat V \cos \hat \Sigma t  +  \hat U \sin \hat \Sigma t\big) \hat V^{\Tsf},
\ee
where  $E = \hat U \hat \Sigma \hat V^{\Tsf}$ is the compact SVD of $E$.

Some retractions on $\stief$ can be naturally taken as the retractions on $\grass$. 
\begin{proposition}\label{proposition:grass:stief}
Suppose  $E \neq 0$, then the retractions \rblack{\eqref{equ:retraction:qr} -- \eqref{equ:retraction:jd}}
can serve as the retractions on $\grass$. 
\rblack{If $X^{\Tsf} \nabla f(X) \equiv \nabla f(X)^{\Tsf} X$, \eqref{equ:retraction:gp} is also  a retraction on $\grass$.}
\end{proposition}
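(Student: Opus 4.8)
\emph{Proof idea.} Recall that $\grass$ carries the quotient structure $\stief/\mathsf{St}_r$, that the horizontal space at $X$ is $\mathcal{H}_X=\{Z\in\Rbb^{d\times r}\colon X^{\T}Z=0\}$, which is contained in $\Tbf_X\stief=\{Z\colon X^{\T}Z+Z^{\T}X=0\}$, and that the quotient projection $\pi\colon\stief\to\grass$, $\pi(X)=[X]$, restricts to a linear isomorphism $\mathcal{H}_X\to\Tbf_{[X]}\grass$ that acts as the identity under the usual identifications \cite{edelman1998geometry,absil2008optimization}. Since each of \eqref{equ:retraction:qr}--\eqref{equ:retraction:jd} is a retraction on $\stief$, for any $E\in\mathcal{H}_X$ we automatically have $\bar\Rscr(X,0)=X$ and $\frac{\mathrm d}{\mathrm dt}\bar\Rscr(X,tE)\big|_{t=0}=E$ (these hold for all of $\Tbf_X\stief$). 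Hence $\pi\circ\bar\Rscr$ satisfies $(\pi\circ\bar\Rscr)(X,0)=[X]$ and $\frac{\mathrm d}{\mathrm dt}(\pi\circ\bar\Rscr)(X,tE)\big|_{t=0}=\mathrm d\pi_X(E)=E$. The remaining point — and the only nontrivial one — is to show that $\pi\circ\bar\Rscr$ descends to a well-defined map on $\grass$, i.e.\ that for every $Q\in\mathsf{St}_r$ the columns of $\bar\Rscr(XQ,t(EQ))$ span the same subspace as those of $\bar\Rscr(X,tE)$, where $EQ$ is the horizontal lift at $XQ$ of the same Grassmann tangent vector. (For $E=0$ the curve is constant at $[X]$, so we may assume $E\neq0$ as in the statement.)

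For the QR and polar retractions this is immediate: $\bar\Rscr_{\mathrm{qr}}(X,tE)=\mathrm{qr}(X+tE)$ and $\bar\Rscr_{\mathrm{pd}}(X,tE)=\Pcal_{\orth}(X+tE)$ both have column span equal to that of $X+tE$, and $\mathrm{span}\big((X+tE)Q\big)=\mathrm{span}(X+tE)$ since $Q$ is invertible; in fact one checks directly from $\Pcal_{\orth}(\tilde X)=\tilde X(\tilde X^{\T}\tilde X)^{-1/2}$ that $\Pcal_{\orth}(\tilde XQ)=\Pcal_{\orth}(\tilde X)\,Q$. For the Wen--Yin retraction \eqref{equ:retraction:wy} I would note that $UV^{\T}=-P_XEX^{\T}+XE^{\T}P_X$ with $P_X=I_d-\tfrac12XX^{\T}$, that $P_{XQ}=I_d-\tfrac12XQQ^{\T}X^{\T}=P_X$, and hence that the low-rank (skew) generator $UV^{\T}$ is invariant under $(X,E)\mapsto(XQ,EQ)$; since $\bar\Rscr_{\mathrm{wy}}(X,tE)=C(t)X$ for an orthogonal matrix $C(t)$ (the Cayley transform of $\pm\tfrac t2 UV^{\T}$) that depends only on $UV^{\T}$, this gives $\bar\Rscr_{\mathrm{wy}}(XQ,tEQ)=C(t)XQ=\bar\Rscr_{\mathrm{wy}}(X,tE)\,Q$. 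Similarly, for \eqref{equ:retraction:jd} one has $D=(I_d-XX^{\T})E\mapsto(I_d-XX^{\T})EQ=DQ$ and $J(t)\mapsto I_r+\tfrac{t^2}{4}Q^{\T}D^{\T}DQ-\phi(t)Q^{\T}X^{\T}EQ=Q^{\T}J(t)Q$; substituting into \eqref{equ:retraction:jd} yields $\bar\Rscr_{\mathrm{jd}}(XQ,tEQ)=(2XQ+tDQ)Q^{\T}J(t)^{-1}Q-XQ=\bar\Rscr_{\mathrm{jd}}(X,tE)\,Q$. In every case the value is changed at most by right multiplication by $Q$, so $\pi\circ\bar\Rscr$ is well-defined on $\grass$ and is a retraction there.

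For the gradient-projection retraction \eqref{equ:retraction:gp} the tangent direction is forced to be $\Rscr_{\mathrm{gp}}'(0)=-\Dbf_{1/4}(X,\nabla f(X))$, so the first thing to verify is that this lies in the horizontal space at $X$. Using $X^{\T}X=I_r$ one computes $X^{\T}\Dbf_{1/4}(X,Y)=\mskew(X^{\T}Y)$, which vanishes precisely when $X^{\T}Y=Y^{\T}X$; taking $Y=\nabla f(X)$, under the hypothesis $X^{\T}\nabla f(X)\equiv\nabla f(X)^{\T}X$ we obtain $\Dbf_{1/4}(X,\nabla f(X))=(I_d-XX^{\T})\nabla f(X)=\Dbf_0(X,\nabla f(X))\in\Tbf_{[X]}\grass$, as already observed after \eqref{equ:Drho:X:Y}. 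Well-definedness on $\grass$ then follows because $f$, being defined on $\grass$, is $\mathsf{St}_r$-invariant, $f(XQ)=f(X)$, whence $\nabla f(XQ)=\nabla f(X)Q$; hence $XQ-t\nabla f(XQ)=(X-t\nabla f(X))Q$ and $\Pcal_{\orth}\big((X-t\nabla f(X))Q\big)=\Pcal_{\orth}(X-t\nabla f(X))\,Q$ spans the same subspace as $\Pcal_{\orth}(X-t\nabla f(X))$, so $\pi\circ\Rscr_{\mathrm{gp}}$ is again a well-defined retraction on $\grass$.

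The routine parts are the span/equivariance checks for QR and polar and the identity $X^{\T}\Dbf_{1/4}(X,Y)=\mskew(X^{\T}Y)$; the step requiring a little care is the $\mathsf{St}_r$-equivariance of \eqref{equ:retraction:wy} and \eqref{equ:retraction:jd}, which rests entirely on the observation that $I_d-XX^{\T}$ and $I_d-\tfrac12XX^{\T}$ depend only on $\mathrm{span}(X)$ and so are invariant under $X\mapsto XQ$, $Q\in\mathsf{St}_r$.
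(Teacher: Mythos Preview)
Your proof is correct but takes a genuinely different route from the paper's. You verify $\mathsf{St}_r$-equivariance of each Stiefel retraction --- that $\bar\Rscr(XQ,tEQ)$ and $\bar\Rscr(X,tE)$ have the same column span (indeed differ by right multiplication by $Q$ for all but the QR case) --- so that $\pi\circ\bar\Rscr$ descends to a well-defined map on the quotient $\grass=\stief/\mathsf{St}_r$; the retraction axioms then follow from those on $\stief$ via $\mathrm d\pi_X$ restricted to the horizontal space. The paper instead argues by contradiction that for horizontal $E\neq 0$ and any $t_0>0$ one has $\Rscr(t_0)\notin[X]$: if $\Rscr(t_0)\in[X]$ then $E^{\Tsf}\Rscr(t_0)=0$, and the explicit formula for each retraction (e.g.\ $X+t_0E=\Rscr_{\mathrm{pd}}(t_0)(I_r+t_0^2E^{\Tsf}E)^{1/2}$) forces $t_0\,E^{\Tsf}E=0$, contradicting $E\neq 0$. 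Your argument addresses representative-independence, the standard quotient concern, and does not actually use the hypothesis $E\neq 0$ (the $E=0$ case is trivial, as you note); the paper's argument checks instead that the retracted curve leaves the fiber over $[X]$, and this is precisely where $E\neq 0$ enters. Both are short; your equivariance computations for \eqref{equ:retraction:wy} and \eqref{equ:retraction:jd} deliver slightly more, namely an explicit $\mathsf{St}_r$-action on the retracted points.
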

\begin{proof}
It  only needs to show $\rblack{\Rscr(t)}\not \in  [X]$ for any \rblack{$t \geq 0$}. We prove this by contradiction. Suppose that $Y(t_0) \in [X]$ for \rblack{some $t_0 > 0$}, 
we have  $E^{\Tsf} Y(t_0) = 0$. 
 
 For  \eqref{equ:retraction:qr} and \eqref{equ:retraction:pd}, we have $X + t_0 E = \Rscr_{\mathrm{qr}}(t_0) \mathrm{upp}(X + t_0 E)$
and $X + t_0 E = \Rscr_{\mathrm{pd}}(t_0) (I_r + t_0^2 E^{\Tsf} E)^{\frac12}$, respectively. 
For \eqref{equ:retraction:jd}, we have $X(2I_r - J(t)) + t_0 E = \Rscr_{\mathrm{jd}}(t_0) J(t_0)$. By any of the above three equalities, we always have 
$t_0 E^{\Tsf} E = 0$, namely, $E = 0$. This leads to a contradiction.  Thus the retractions \eqref{equ:retraction:qr}, \eqref{equ:retraction:pd} and \eqref{equ:retraction:jd} are also well-defined retractions on $\grass$. 
Note that \eqref{equ:retraction:wy} is equivalent to \eqref{equ:retraction:jd} with $\phi(t) = \frac12 t$, we immediately know that \eqref{equ:retraction:wy} is also a well-defined retraction on $\grass$. 
If $X^{\Tsf} \nabla f(X) \equiv \nabla f(X)^{\Tsf} X$, the direction $E$ for \eqref{equ:retraction:gp} is given as $E = -\Dbf_0(X, \nabla f(X))$. With slight abuse of notation, let $U\Sigma V$ be the compact SVD of $X - t\nabla f(X)$. Then $Y(t_0) = UV^{\Tsf}$ and thus $E^{\Tsf} U = 0$, which further implies that $E^{\Tsf} \nabla f(X) = E^{\Tsf} E = 0$. This leads to a contradiction. 
\end{proof}

Very recently, using the Householder transformation, Gao et al. \cite{gao2016new} proposed the gradient reflection retraction as 
\be\label{equ:retraction:gr}
\Rscr_{\mathrm{gr}}(X, -2t\Dbf_0(X, \nabla f(X))) =  (-I_n + 2 \bar X(\bar X^{\Tsf} \bar X)^{\dagger}\bar X^{\Tsf})X,
\ee
where $\bar X= X - t  \nabla f(X)$ \rblack{and $(\bar X^{\Tsf} \bar X)^{\dagger}$ denotes the pseudo-inverse of $\bar X^{\Tsf} \bar X$}.
By some simple computations, we can show that $\Rscr_{\mathrm{gr}}'(0) = -2 \Dbf_0(X, \nabla f(X))$. \rblack{Similar to Proposition \ref{proposition:grass:stief}}, it is easy to show that $\Rscr_{\mathrm{gr}}(t)$ is a well-defined retraction on $\grass$. The Householder transformation is also used in \cite{sun2012low} to preserve the orthogonality constraints.

\subsection{Estimation of $L_1$ and $L_2$ for polar decomposition}\label{section:L1:L2}
\begin{lemma}\label{lemma:well:polar}
For any $X \in \orth$ and $E \in  \Tbf_X \orth$, consider the retraction of polar decomposition  \eqref{equ:retraction:pd}.
Then equations \eqref{equ:retraction:b1} and \eqref{equ:retraction:b2}  hold  for any $t \geq 0$ with $L_1 = 1$ and $L_2 = 1/2$.
\end{lemma}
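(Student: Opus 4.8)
The plan is to derive an explicit closed form for $\Rscr_{\mathrm{pd}}(t):=\Pcal_{\orth}(X+tE)$ and read off both estimates from it. The first observation is that since $X^{\Tsf}X=I_r$ and $X^{\Tsf}E+E^{\Tsf}X=0$ (tangency), the Gram matrix collapses to
\[
(X+tE)^{\Tsf}(X+tE)=I_r+t^2E^{\Tsf}E\succeq I_r,
\]
which is always invertible; hence $X+tE$ has full column rank for every $t\ge 0$ (so $T_{\Rscr}=+\infty$) and $\Rscr_{\mathrm{pd}}(t)=(X+tE)(I_r+t^2E^{\Tsf}E)^{-1/2}$. Differentiating at $t=0$ confirms $\Rscr_{\mathrm{pd}}(0)=X$ and $\Rscr_{\mathrm{pd}}'(0)=E$. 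I would then diagonalize $E^{\Tsf}E=V\Lambda V^{\Tsf}$ with $\Lambda=\diag(\sigma_1^2,\dots,\sigma_r^2)$, the $\sigma_i\ge 0$ being the singular values of $E$, so that $\|E\|_{\Fsf}^2=\sum_i\sigma_i^2$ and every function of $I_r+t^2E^{\Tsf}E$ is diagonalized by the same $V$.

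For \eqref{equ:retraction:b1}, I would expand $\|\Rscr_{\mathrm{pd}}(t)-X\|_{\Fsf}^2=2r-2\,\mathrm{tr}\big(X^{\Tsf}\Rscr_{\mathrm{pd}}(t)\big)$ and use $X^{\Tsf}\Rscr_{\mathrm{pd}}(t)=(I_r+tX^{\Tsf}E)(I_r+t^2E^{\Tsf}E)^{-1/2}$. The key point is that $X^{\Tsf}E$ is skew-symmetric while $(I_r+t^2E^{\Tsf}E)^{-1/2}$ is symmetric, so the cross term has zero trace, giving $\mathrm{tr}\big(X^{\Tsf}\Rscr_{\mathrm{pd}}(t)\big)=\mathrm{tr}\big((I_r+t^2E^{\Tsf}E)^{-1/2}\big)=\sum_i(1+t^2\sigma_i^2)^{-1/2}$. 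Hence $\|\Rscr_{\mathrm{pd}}(t)-X\|_{\Fsf}^2=2\sum_i\big(1-(1+t^2\sigma_i^2)^{-1/2}\big)$, and it remains to verify the scalar inequality $2\big(1-(1+u)^{-1/2}\big)\le u$ for all $u\ge 0$ — trivial for $u\ge 2$, and for $0\le u<2$ one squares to get $(2-u)^2(1+u)=4+u^2(u-3)\le 4$. Summing over $i$ with $u=t^2\sigma_i^2$ yields $\|\Rscr_{\mathrm{pd}}(t)-X\|_{\Fsf}^2\le t^2\|E\|_{\Fsf}^2$, i.e.\ $L_1=1$.

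For \eqref{equ:retraction:b2}, I would write $\Rscr_{\mathrm{pd}}(t)-X-tE=(X+tE)\big((I_r+t^2E^{\Tsf}E)^{-1/2}-I_r\big)$; setting $D:=I_r+t^2E^{\Tsf}E$, the two factors in $(\cdot)^{\Tsf}(\cdot)$ are functions of $D$ and hence commute, so $\|\Rscr_{\mathrm{pd}}(t)-X-tE\|_{\Fsf}^2=\mathrm{tr}\big(D(D^{-1/2}-I_r)^2\big)=\mathrm{tr}\big((D^{1/2}-I_r)^2\big)=\sum_i\big(\sqrt{1+t^2\sigma_i^2}-1\big)^2$. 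Using $\sqrt{1+x}-1=x/(\sqrt{1+x}+1)\le x/2$ bounds this by $\tfrac{t^4}{4}\sum_i\sigma_i^4\le\tfrac{t^4}{4}\big(\sum_i\sigma_i^2\big)^2$ (the $\ell_2\le\ell_1$ inequality applied to $(\sigma_1^2,\dots,\sigma_r^2)$), and taking square roots gives $\|\Rscr_{\mathrm{pd}}(t)-X-tE\|_{\Fsf}\le\tfrac12 t^2\|E\|_{\Fsf}^2$, i.e.\ $L_2=1/2$. I do not anticipate a real obstacle; the only steps requiring a little care are the skew/symmetric trace cancellation in \eqref{equ:retraction:b1} and tracking which elementary norm inequality on the squared singular values is needed to close \eqref{equ:retraction:b2}.
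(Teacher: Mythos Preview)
Your proposal is correct and follows essentially the same route as the paper: both compute $\Rscr_{\mathrm{pd}}(t)=(X+tE)(I_r+t^2E^{\Tsf}E)^{-1/2}$, exploit the skew-symmetry of $X^{\Tsf}E$ against a symmetric factor to reduce the first bound to $2\sum_i\big(1-(1+t^2\lambda_i)^{-1/2}\big)\le t^2\sum_i\lambda_i$, and for the second bound reduce to $\mathrm{tr}\big((D^{1/2}-I_r)^2\big)$ followed by the scalar estimate $(\sqrt{1+z}-1)^2\le z^2/4$ and $\sum_i\lambda_i^2\le(\sum_i\lambda_i)^2$. The only cosmetic difference is that the paper names $H=(I_r+t^2E^{\Tsf}E)^{1/2}$ and works with eigenvalues $\lambda_i(E^{\Tsf}E)$ rather than singular values of $E$.
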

\begin{proof}
First \rblack{we naturally have} $\rblack{\Rscr'(0)} = E$.  For simplicity, denote $H=  \left(I_r + t^2 E^{\Tsf} E\right)^{\frac12}$. Thus we have
$\rblack{\Rscr(t)} - \rblack{\Rscr(0)} ={}  \left( X (I_r - H) + t E \right) H^{-1}$,
which together  with the fact that $\mtr(X^{\Tsf}ES) = \mtr(S E^{\Tsf} X) = 0$ for any symmetric $S \in \Rbb^{r \times r}$  implies that
\begin{align}\label{lemma:well:polar:a1}
\|\rblack{\Rscr(t)} - \rblack{\Rscr(0)}\|_{\Fsf}^2 ={} & 
 2 \mtr (I_r - H^{-1})
={} 2 \sum_{i = 1}^r \left(1 - (1 + t^2 \lambda_i(E^{\Tsf}E))^{-1/2}\right) \rblack{\leq  t^2 \|E\|_{\Fsf}^2,}
\end{align}
where the first equality is due to $t^2 E^{\Tsf} E = H^2 - I_r$ which follows from the definition of $H$, and the inequality is due to
\rblack{$2 (1 - (1 + z)^{-1/2} ) \leq z$ with $z = t^2 \lambda_i(E^{\Tsf}E)$}.

Note that $\rblack{\Rscr(t)} - \rblack{\Rscr(0)} - t \rblack{\Rscr'(0)} =  (X+ tE) (H^{-1} - I_r)$. Again \rblack{from} $t^2 E^{\Tsf} E = H^2 - I_r$, we have
\begin{align} \label{lemma:well:polar:a2}
\|\rblack{\Rscr(t)} - \rblack{\Rscr(0)} - t \rblack{\Rscr'(0)}\|_{\Fsf}^2 ={}& 
 \mtr( (I_r - H)^2) =  \sum_{i = 1}^r \Big( 1- \sqrt{1 + t^2 \lambda_i(E^{\Tsf}E)}\,\Big)^2 \nn \\
\leq{}& \frac{t^4}{4} \sum_{i = 1}^r \lambda_i^2(E^{\Tsf}E) \leq
 \frac{t^4}{4} \|E\|_{\Fsf}^4,
\end{align}
where the first inequality is due to  \rblack{$(1 - (1 + z)^{1/2})^2 \leq z^2/4$ with $z = t^2 \lambda_i(E^{\Tsf}E)$}.
It follows from Lemma \ref{lemma:well:polar:a1} and Lemma \ref{lemma:well:polar:a2} that  \eqref{equ:retraction:b1} and \eqref{equ:retraction:b2} hold with $L_1 = 1$ and $L_2 = \frac12$, respectively.
\end{proof}
\subsection{Estimation of $L_1$ and $L_2$ for QR factorization} \label{subsection:QR}
For any $A \in \Rbb^{n \times n}$, as the same in \cite{chang1997perturbation}, we define the upper triangular matrix $\up(A) \in \Rbb^{n \times n}$ as $\up(A)_{ij} = A_{ij}$  if $i<j$, $\up(A)_{ij}  = A_{ii}/2$ if $i = j$ and $\up(A)_{ij}  = 0$ if $i > j$.
We further have that  $2\|\up(A)\|_{\Fsf}^2 =   \|A\|_{\Fsf}^2 - \frac12 \sum_{i = 1}^n A_{ii}^2 \leq \|A\|_{\Fsf}^2,$ which implies that
$\|\up(A)\|_{\Fsf} \leq  \sqrt{2}/2 \|A\|_{\Fsf}. $
\begin{lemma}\label{lemma:well:qr}
For any $X \in \orth$ and $E \in  \Tbf_X \orth$, consider the retraction of QR factorization  \eqref{equ:retraction:qr}.
Then equations \eqref{equ:retraction:b1} and \eqref{equ:retraction:b2} hold  for any $t \geq 0$ with $L_1 = 1 + \sqrt{2}/2$ and $L_2 = {\sqrt{10}}/{2}$.
\end{lemma}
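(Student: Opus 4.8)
The plan is to follow the scheme of Lemma~\ref{lemma:well:polar}, exploiting the structure the Stiefel tangency condition forces on the QR factorization. Write $X+tE=\Rscr(t)R(t)$ with $\Rscr(t)=\qr(X+tE)\in\orth$ and $R(t)$ upper triangular with positive diagonal. From $X^{\Tsf}E+E^{\Tsf}X=0$ one gets $(X+tE)^{\Tsf}(X+tE)=I_r+t^2E^{\Tsf}E$, hence $R(t)^{\Tsf}R(t)=I_r+t^2E^{\Tsf}E$; in particular $R(0)=I_r$, $\|R(t)^{-1}\|_2\le1$ (since $R(t)^{\Tsf}R(t)\succeq I_r$), and $\Rscr'(0)=E$ because the QR map is a first-order retraction \cite{absil2008optimization}. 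I will also use: (a) $R_{ii}(t)\ge1$ for each $i$, which follows by writing $R_{ii}(t)^2$ as the ratio of consecutive leading principal minors of $I_r+t^2E^{\Tsf}E$ and applying Cauchy interlacing to the positive semidefinite matrix $t^2E^{\Tsf}E$; and (b) the bound $\|\up(A)\|_{\Fsf}\le\tfrac{\sqrt2}{2}\|A\|_{\Fsf}$ noted just above.

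For \eqref{equ:retraction:b2}: a direct computation gives $\Rscr(t)-\Rscr(0)-t\Rscr'(0)=(X+tE)(R(t)^{-1}-I_r)=\Rscr(t)(I_r-R(t))$, so $\|\Rscr(t)-\Rscr(0)-t\Rscr'(0)\|_{\Fsf}=\|R(t)-I_r\|_{\Fsf}$ and it remains to show $\|R(t)-I_r\|_{\Fsf}\le\tfrac{\sqrt{10}}{2}t^2\|E\|_{\Fsf}^2$. Put $S:=R(t)-I_r$ (upper triangular); applying $\up(\cdot)$ to $t^2E^{\Tsf}E=S^{\Tsf}S+S+S^{\Tsf}$ and using $\up(S+S^{\Tsf})=S$ gives $S=\up(t^2E^{\Tsf}E)-\up(S^{\Tsf}S)$, so $\|S\|_{\Fsf}\le\tfrac{\sqrt2}{2}t^2\|E\|_{\Fsf}^2+\tfrac{\sqrt2}{2}\|S\|_{\Fsf}^2$. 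Separately, $\|S\|_{\Fsf}^2=\|R(t)\|_{\Fsf}^2-2\mtr(R(t))+r=t^2\|E\|_{\Fsf}^2-2\sum_i(R_{ii}(t)-1)\le t^2\|E\|_{\Fsf}^2$ by (a), so $\|S\|_{\Fsf}\le t\|E\|_{\Fsf}$ and hence $\|S\|_{\Fsf}^2\le t\|E\|_{\Fsf}\,\|S\|_{\Fsf}$. I then split on $\eta:=t\|E\|_{\Fsf}$: for $\eta\ge\delta$ the a priori estimate alone gives $\|S\|_{\Fsf}\le\eta\le\delta^{-1}\eta^2$; for $\eta<\delta$ the two displays combine to $\|S\|_{\Fsf}(1-\tfrac{\sqrt2}{2}\eta)\le\tfrac{\sqrt2}{2}\eta^2$, i.e.\ $\|S\|_{\Fsf}\le\frac{\sqrt2/2}{1-(\sqrt2/2)\delta}\eta^2$. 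Picking any $\delta$ in the (nonempty) interval $[\,2/\sqrt{10},\ \sqrt2(1-1/\sqrt5)\,]$ makes both right-hand sides $\le\tfrac{\sqrt{10}}{2}\eta^2$, which is \eqref{equ:retraction:b2} with $L_2=\sqrt{10}/2$.

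For \eqref{equ:retraction:b1}: since $\|\Rscr(t)-X\|_{\Fsf}\le\|R(t)-I_r\|_{\Fsf}+t\|E\|_{\Fsf}$ only yields $L_1=2$, I will factor through the polar retraction. With $X+tE=\Rscr_{\mathrm{pd}}(t)H(t)$, $H(t)=(I_r+t^2E^{\Tsf}E)^{1/2}$, the matrix $O(t):=H(t)R(t)^{-1}$ is orthogonal (since $O(t)^{\Tsf}O(t)=R(t)^{-\Tsf}(I_r+t^2E^{\Tsf}E)R(t)^{-1}=I_r$) and $\Rscr(t)=\Rscr_{\mathrm{pd}}(t)O(t)$, so by Lemma~\ref{lemma:well:polar},
\[
\|\Rscr(t)-X\|_{\Fsf}\ \le\ \|\Rscr_{\mathrm{pd}}(t)-X\|_{\Fsf}+\|O(t)-I_r\|_{\Fsf}\ \le\ t\|E\|_{\Fsf}+\|O(t)-I_r\|_{\Fsf},
\]
and it suffices to prove $\|O(t)-I_r\|_{\Fsf}\le\tfrac{\sqrt2}{2}t\|E\|_{\Fsf}$, which gives $L_1=1+\sqrt2/2$ (indeed $(1+\tfrac{\sqrt2}{2})^2=\tfrac32+\sqrt2$ matches the resulting $(t\|E\|_{\Fsf})^2+\sqrt2(t\|E\|_{\Fsf})^2+\tfrac12(t\|E\|_{\Fsf})^2$). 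Since $O(t)R(t)=H(t)$ is the QR factorization of the symmetric positive definite $H(t)$, I will bound $\|O(t)-I_r\|_{\Fsf}\le\|H(t)-R(t)\|_{\Fsf}$ (using $\|R(t)^{-1}\|_2\le1$) and control $\|H(t)-R(t)\|_{\Fsf}$ via $\|H(t)-R(t)\|_{\Fsf}^2=2\big(\mtr(I_r+t^2E^{\Tsf}E)-\mtr(H(t)R(t))\big)$ together with the same $\up(\cdot)$-recursion and small-/large-$\eta$ split, the threshold chosen so the constant is $\le\sqrt2/2$. Alternatively, one may compute $\|\Rscr(t)-X\|_{\Fsf}^2$ directly as $2\sum_i(1-R_{ii}(t)^{-1})$ plus a cross term involving only the strictly-upper-triangular parts of $X^{\Tsf}E$ and $R(t)^{-1}$, and bound each piece using $R_{ii}(t)\ge1$, $\|R(t)-I_r\|_{\Fsf}\le t\|E\|_{\Fsf}$ and $\|X^{\Tsf}E\|_{\Fsf}\le\|E\|_{\Fsf}$.

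The main obstacle is the uniform-in-$t$ control of $\|R(t)-I_r\|_{\Fsf}$ and $\|O(t)-I_r\|_{\Fsf}$ with the exact constants $\sqrt{10}/2$ and $\sqrt2/2$: the $\up(\cdot)$-recursion only closes for $\eta=t\|E\|_{\Fsf}$ below an explicit threshold and must be patched with the trivial/a priori bounds in the complementary range, so the choice of threshold (checking the admissible interval is nonempty) and the arithmetic with $\sqrt2,\sqrt5,\sqrt{10}$ have to be carried out carefully. A secondary point is the clean derivation of $\Rscr'(0)=E$ for the QR retraction and of $R_{ii}(t)\ge1$ via Cauchy interlacing.
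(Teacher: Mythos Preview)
Your argument for $L_2=\sqrt{10}/2$ is correct and genuinely different from the paper's. The paper differentiates the factorization $X+tE=Q(t)R(t)$ to obtain $R'(t)=2t\,\up\!\big(R(t)^{-\Tsf}E^{\Tsf}ER(t)^{-1}\big)R(t)$ and $Q'(t)=ER(t)^{-1}-2t\,Q(t)\,\up\!\big(R(t)^{-\Tsf}E^{\Tsf}ER(t)^{-1}\big)$, then uses the eigenvalue identity for $R(t)^{-\Tsf}E^{\Tsf}ER(t)^{-1}$ (similar to $(I+t^2E^{\Tsf}E)^{-1}E^{\Tsf}E$) together with $\|\up(\cdot)\|_{\Fsf}\le\tfrac{\sqrt2}{2}\|\cdot\|_{\Fsf}$ to get $\|R'(t)\|_{\Fsf}\le\tfrac{\sqrt{10}}{2}t\|E\|_{\Fsf}^2$, and integrates. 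Your static route---the identity $S=\up(t^2E^{\Tsf}E)-\up(S^{\Tsf}S)$, the a~priori bound $\|S\|_{\Fsf}\le t\|E\|_{\Fsf}$ via Cauchy interlacing for the diagonal of $R(t)$, and the small/large-$\eta$ split---avoids differentiation entirely; the interval $[2/\sqrt{10},\,\sqrt2(1-1/\sqrt5)]$ is indeed nonempty, so this part goes through.

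For $L_1$, however, there is a real gap: the intermediate inequality $\|H(t)-R(t)\|_{\Fsf}\le\tfrac{\sqrt2}{2}\,t\|E\|_{\Fsf}$ is false, so no choice of threshold can close the argument. Take $r=2$, $X^{\Tsf}E=0$ and $E^{\Tsf}E$ close to the rank-one matrix $\bigl(\begin{smallmatrix}1&1\\1&1\end{smallmatrix}\bigr)$, so $\|E\|_{\Fsf}^2=2$. A direct computation gives, as $t\to\infty$,
\[
\frac{\|H(t)-R(t)\|_{\Fsf}^2}{t^2\|E\|_{\Fsf}^2}\ \longrightarrow\ 2-\sqrt2\ \approx\ 0.586\ >\ \tfrac12=(\tfrac{\sqrt2}{2})^2,
\]
so the bound $\|O(t)-I_r\|_{\Fsf}\le\|H(t)-R(t)\|_{\Fsf}$ is too loose to yield the constant $\sqrt2/2$. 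Your ``alternative'' direct expansion of $\|\Rscr(t)-X\|_{\Fsf}^2$ is only a sketch; extracting the sharp constant $(1+\tfrac{\sqrt2}{2})^2=\tfrac32+\sqrt2$ from it would require a tight estimate on the strictly upper part of $R(t)^{-1}$, which you have not supplied.

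The paper's route for \eqref{equ:retraction:b1} is short and avoids this obstacle: from $\|Q'(t)\|_{\Fsf}\le\|E\|_{\Fsf}\|R(t)^{-1}\|_2+2t\,\|\up(R(t)^{-\Tsf}E^{\Tsf}ER(t)^{-1})\|_{\Fsf}\le \|E\|_{\Fsf}+\sqrt2\,\|E\|_{\Fsf}\min\{t\|E\|_{\Fsf},\tfrac12\}\le(1+\tfrac{\sqrt2}{2})\|E\|_{\Fsf}$ for all $t\ge0$, one gets $\|\Rscr(t)-X\|_{\Fsf}\le\int_0^t\|Q'(u)\|_{\Fsf}\,\ud u\le(1+\tfrac{\sqrt2}{2})t\|E\|_{\Fsf}$ directly. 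I suggest you adopt this derivative argument for $L_1$ and keep your own argument for $L_2$.
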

\begin{proof}
Let the QR factorization of $X + tE$ be
\be\label{equ:lemma:well:qr:a1}
X + t E = Q(t) R(t),
\ee
where $Q(t) \in \orth$ and $R(t) \in \Rbb^{r \times r}$ is upper triangular with positive diagonal elements.
We then have $\rblack{\Rscr(t)} = Q(t)$ and
\be\label{equ:lemma:well:qr:RtRt}
R(t)^{\Tsf} R(t) = I_r + t^2 E^{\Tsf} E.
\ee
Differentiating   both sides of \eqref{equ:lemma:well:qr:RtRt} \rblack{with respect to} $t$, we have
$R'(t)^{\Tsf} R(t) + R(t)^{\Tsf} R'(t) = 2t E^{\Tsf}E$ and further $(R'(t) R(t)^{-1})^{\Tsf} + R'(t) R(t)^{-1} = 2t R(t)^{-\Tsf}E^{\Tsf}E R(t)^{-1}.$
Noting that  $R'(t) R(t)^{-1}$ is upper triangular,  so we obtain
\be\label{equ:lemma:well:qr:dRt}
R'(t) = 2t\,\up\left(R(t)^{-\Tsf}E^{\Tsf}E R(t)^{-1} \right) R(t).
\ee
Differentiating  both sides of \eqref{equ:lemma:well:qr:a1} \rblack{with respect to} $t$, it follows from \eqref{equ:lemma:well:qr:dRt}  that
\be \label{equ:lemma:well:qr:dQt}
Q'(t) = E R(t)^{-1} - 2t\,Q(t) \up\left(R(t)^{-\Tsf}E^{\Tsf}E R(t)^{-1} \right)\!.
\ee
We now bound the term $t \|\up\left(R(t)^{-\Tsf}E^{\Tsf}E R(t)^{-1} \right)\|_{\Fsf}$.  Using \eqref{equ:lemma:well:qr:RtRt},  it is easy to verify
\begin{align}
t^2\|R(t)^{-\Tsf}E^{\Tsf}E R(t)^{-1} \|_{\Fsf}^2 ={}& \sum_{i = 1}^r \left(\frac{t \lambda_i(E^{\Tsf} E)}{1 + t^2 \lambda_i(E^{\Tsf} E)}   \right)^2 \leq{}  \sum_{i=1}^r \min \left\{ t^2 \lambda_i^2(E^{\Tsf} E), \lambda_i(E^{\Tsf} E)/4  \right\} \nn \\
\leq{}& \|E\|_{\Fsf}^2\min\left\{t^2 \|E\|_{\Fsf}^2, 1/4\right\}, \label{equ:lemma:well:Rt_inverse_EtER_inverse:square}
\end{align}
where the first inequality uses $1 + t^2 \lambda_i(E^{\Tsf} E) \geq 2t \sqrt{\lambda_i(E^{\Tsf} E)}$. Squaring both sides of \eqref{equ:lemma:well:Rt_inverse_EtER_inverse:square} and using $\|\up(A)\|_{\Fsf} \leq  \sqrt{2}/2 \|A\|_{\Fsf}$, 
we obtain
\begin{align} \label{equ:lemma:well:qr:upper}
t \|\up\Big(R(t)^{-\Tsf}E^{\Tsf}E R(t)^{-1} \Big)\|_{\Fsf}
\leq ({\sqrt{2}}/{2}) \|E\|_{\Fsf}\min\left\{t \|E\|_{\Fsf}, 1/2\right\},
\end{align}
which \rblack{together} with \eqref{equ:lemma:well:qr:dQt} and  \eqref{equ:lemma:well:qr:dRt}, respectively, indicates
\begin{align} \label{equ:lemma:well:qr:dQt:bound}
\|Q'(t)\|_{\Fsf} \leq{} & (1 + \sqrt{2}/2)\|E\|_{\Fsf}
\end{align}
and
\begin{align} \label{equ:lemma:well:qr:dRt:bound}
\|R'(t)\|_{\Fsf} \leq \sqrt{2} \|E\|_{\Fsf}\min\left\{t \|E\|_{\Fsf}, 1/2\right\} \sqrt{1 + t^2 \|E\|_{\Fsf}^2} \leq ({\sqrt{10}}/{2}) t \|E\|_{\Fsf}^2.
\end{align}

By the Mean-Value Theorem, there exists $u \in (0,t)$ such that  $\rblack{\Rscr(t)}  - \rblack{\Rscr(0)} =  Q(t) - Q(0) = t Q'(u)$. Then
$\|\rblack{\Rscr(t)}  - \rblack{\Rscr(0)} \|_{\Fsf} = t \|Q'(u)\|_{\Fsf}$, which together with \eqref{equ:lemma:well:qr:dQt:bound} implies that \eqref{equ:retraction:b1} holds with $L_1 = 1 + \sqrt{2}/2$.
Again by the Mean-Value Theorem, noting that \eqref{equ:lemma:well:qr:a1} and $R(0) = I_r$, we have
$\rblack{\Rscr(t)}  - \rblack{\Rscr(0)} - t E=  Q(t)(R(0) - R(t)) = t Q(t)R'(u)$, where $u \in (0, t)$. Then $\|\rblack{\Rscr(t)}  - \rblack{\Rscr(0)} - t E\|_{\Fsf}  \leq t \|R'(u)\|_{\Fsf}$, which with \eqref{equ:lemma:well:qr:dRt:bound} yields  that \eqref{equ:retraction:b1} holds with $L_2 = \sqrt{10}/2$.
\end{proof}

\section{Proof of Lemma \ref{lemma:recursion}} \label{subsection:proof:Lemma:recursion}
\begin{proof}
By recursively using \eqref{equ:lemma:recursion:b:a} and noting that $\bbf_0 = 0$, we \rblack{have}
\be \label{equ:lemma:recursion:b:k_plus_1}
\bbf_{k+1} \leq  \abf \sum_{i = 0}^{k} \bbf^{k-i} \abf_i.
\ee
holds for any $k = 0, \ldots, K- 1$.
Again note that  $\bbf_0 = 0$ we thus have  from \eqref{equ:lemma:recursion:b:k_plus_1} that
\begin{align}\label{equ:lemma:recursion:bk:sum}
\sum_{k = 0}^{K-1} \bbf_k   = \sum_{k = 0}^{K-2} \bbf_{k+1} \leq{} & \abf \sum_{k = 0}^{K-2} \sum_{i = 0}^{k} \bbf^{k-i} \abf_i = \abf \sum_{k = 0}^{K-2} \left(\sum_{i=0}^{K-2-k} \bbf^i\right) \abf_{k}  =  \abf  \sum_{k = 0}^{K-1} \frac{\bbf^{K-1 - k} - 1}{\bbf - 1}  \abf_k,
\end{align}
where the last inequality is due to $\abf_k \geq 0$.  Recursively applying \eqref{equ:lemma:recursion:f:a:b} yields 
\begin{align}
\fbf_{K} \leq \fbf_0  - \cbf \sum_{k = 0}^{K-1} \abf_k + \dbf \sum_{k = 0}^{K-1} \bbf_k,
\end{align}
which together with \eqref{equ:lemma:recursion:bk:sum} yields  \eqref{equ:lemma:recursion:bk:fK}.
\end{proof}

\section{Proof of Theorem \ref{theorem:L:inequality:MC}}\label{section:L:Grassmann}

First,  define $d_c(\ubf, \ubf^*) = (d_c(W,U)^2 + d_c(Z,V)^2)^{\frac12}$ with
\be\label{equ:def:dcXU}
d_c(W,U) =  \frac{1}{\sqrt{n}} \min_{Q_1, Q_2 \in \mathsf{St}_r} \|W Q_1 - U Q_2\|_{\Fsf} = \rblack{\frac{1}{\sqrt{n}} \min_{Q \in \mathsf{St}_r} \|W Q - U \|_{\Fsf}}.
\ee
It is known that $d_c(\ubf, \ubf^*) \leq d(\ubf, \ubf^*)$ (see, e.g., Remark 6.1 in \cite{keshavan2010matrix}).  We now present a useful proposition.

\begin{proposition}
Suppose that $\ubf^* \in \mathsf{M}(m,n) \cap \Kcal(3\mu_0)$.  Then
\begin{align}\label{equ:tF:upperbound}
\tF(\ubf) \equiv \tF(W,Z) \leq n \left(m \Sigma_{\max}^2 + \frac{14 e^{\frac19}}{9 \mu_0 r}  \varrho \right) d(\ubf, \ubf^*)^2
\end{align}
holds for all $\ubf \in \mathsf{M}(m,n) \cap \Kcal(4\mu_0)$ with $\mathsf{M}(m,n) = \mathsf{g}(m,r) \times \mathsf{g}(n,r)$.
\end{proposition}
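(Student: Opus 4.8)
The plan is to split $\tilde F(\ubf)=F(W,Z)+\varrho\sum_{i=1}^m G_1\!\big(\|W^{(i)}\|^2/(3\mu_0 r)\big)+\varrho\sum_{j=1}^n G_1\!\big(\|Z^{(j)}\|^2/(3\mu_0 r)\big)$ into its fitting part and its penalty part and bound each. First I would normalize the representatives: since $\tilde F(WQ_1,ZQ_2)=\tilde F(W,Z)$ for $Q_1,Q_2\in\mathsf{St}_r$ and since membership in $\Kcal(4\mu_0)$ is preserved under right multiplication by an orthogonal matrix (the rows of $WQ$ are the rows of $W$ multiplied by $Q$, hence have the same norms), we may replace $(W,Z)$ by the pair realizing the minima in \eqref{equ:def:dcXU}, so that $\|W-U\|_{\Fsf}^2=n\,d_c(W,U)^2$ and $\|Z-V\|_{\Fsf}^2=n\,d_c(Z,V)^2$; consequently $\|W-U\|_{\Fsf}^2+\|Z-V\|_{\Fsf}^2=n\,d_c(\ubf,\ubf^*)^2\le n\,d(\ubf,\ubf^*)^2$ using $d_c\le d$ (Remark 6.1 of \cite{keshavan2010matrix}).

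For the fitting term I would use $M=U\Sigma V^{\Tsf}$ and take $S=\Sigma$ in the definition of $F$; dropping the coordinate projection gives $F(W,Z)\le\tfrac12\|\proj_\Omega(M-W\Sigma Z^{\Tsf})\|_{\Fsf}^2\le\tfrac12\|M-W\Sigma Z^{\Tsf}\|_{\Fsf}^2$. Writing $M-W\Sigma Z^{\Tsf}=(U-W)\Sigma V^{\Tsf}+W\Sigma(V-Z)^{\Tsf}$ and using $\|AB\|_{\Fsf}\le\|A\|_{\Fsf}\|B\|_2$ and $\|AB\|_{\Fsf}\le\|A\|_2\|B\|_{\Fsf}$ together with $\|\Sigma\|_2=\Sigma_{\max}$, $\|V\|_2=\sqrt n$ (from $V^{\Tsf}V=nI_r$) and $\|W\|_2=\sqrt m$ (from $W^{\Tsf}W=mI_r$), we obtain $\|M-W\Sigma Z^{\Tsf}\|_{\Fsf}\le\Sigma_{\max}\big(\sqrt n\,\|U-W\|_{\Fsf}+\sqrt m\,\|V-Z\|_{\Fsf}\big)$. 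Cauchy--Schwarz in the form $(\alpha x+\beta y)^2\le(\alpha^2+\beta^2)(x^2+y^2)$ then gives
\[
F(W,Z)\le\tfrac12\Sigma_{\max}^2(m+n)\big(\|U-W\|_{\Fsf}^2+\|V-Z\|_{\Fsf}^2\big)\le mn\,\Sigma_{\max}^2\,d(\ubf,\ubf^*)^2,
\]
where the last step uses $m+n\le 2m$ together with the bound from the previous paragraph.

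For the penalty term I would bound $G_1$ pointwise. Since $\ubf\in\Kcal(4\mu_0)$, each argument $z_i:=\|W^{(i)}\|^2/(3\mu_0 r)$ satisfies $z_i\le 4/3$, so $(z_i-1)^2\le 1/9$, and since $e^t-1\le te^t$ for $t\ge0$ we get $G_1(z_i)\le(z_i-1)^2 e^{(z_i-1)^2}\le e^{1/9}\big((z_i-1)_+\big)^2$ (the right side is $0$ when $z_i\le1$). Because $\ubf^*\in\Kcal(3\mu_0)$ gives $\|U^{(i)}\|^2/(3\mu_0 r)\le1$, we have $(z_i-1)_+\le\big|\|W^{(i)}\|^2-\|U^{(i)}\|^2\big|/(3\mu_0 r)\le(\|W^{(i)}\|+\|U^{(i)}\|)\,\|W^{(i)}-U^{(i)}\|/(3\mu_0 r)$ by the reverse triangle inequality, and $\|W^{(i)}\|\le 2\sqrt{\mu_0 r}$, $\|U^{(i)}\|\le\sqrt3\,\sqrt{\mu_0 r}$ give $(z_i-1)_+\le\frac{2+\sqrt3}{3\sqrt{\mu_0 r}}\|W^{(i)}-U^{(i)}\|$. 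Squaring, summing over $i$, and using $(2+\sqrt3)^2=7+4\sqrt3<14$ yields $\sum_i G_1(z_i)\le\frac{14 e^{1/9}}{9\mu_0 r}\|W-U\|_{\Fsf}^2$, and likewise $\sum_j G_1(\|Z^{(j)}\|^2/(3\mu_0 r))\le\frac{14 e^{1/9}}{9\mu_0 r}\|Z-V\|_{\Fsf}^2$. Adding these to the fitting-term bound gives $\tilde F(\ubf)\le mn\Sigma_{\max}^2 d(\ubf,\ubf^*)^2+\varrho\frac{14 e^{1/9}}{9\mu_0 r}\big(\|W-U\|_{\Fsf}^2+\|Z-V\|_{\Fsf}^2\big)\le n\big(m\Sigma_{\max}^2+\frac{14 e^{1/9}}{9\mu_0 r}\varrho\big)d(\ubf,\ubf^*)^2$, which is \eqref{equ:tF:upperbound}.

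The main obstacle is the penalty estimate: one has to use $\ubf\in\Kcal(4\mu_0)$ to force $z_i\le4/3$ (which tames the exponential and produces the factor $e^{1/9}$) and simultaneously $\ubf^*\in\Kcal(3\mu_0)$ to make $G_1$ vanish at $\ubf^*$ and to control $(z_i-1)_+$ by $\|W^{(i)}-U^{(i)}\|$, and then track the constant $(2+\sqrt3)^2<14$ through the difference-of-squares factorization; the fitting part, by contrast, is a routine triangle-inequality and Cauchy--Schwarz computation once $M=U\Sigma V^{\Tsf}$ and the scalings $\|W\|_2=\sqrt m$, $\|V\|_2=\sqrt n$ are exploited.
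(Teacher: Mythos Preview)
Your proposal is correct and follows essentially the same approach as the paper: bound $F(W,Z)$ by dropping $\proj_\Omega$, choosing $S=\Sigma$, and splitting $M-W\Sigma Z^{\Tsf}=(U-W)\Sigma V^{\Tsf}+W\Sigma(V-Z)^{\Tsf}$; then bound each $G_1$ term via $G_1(z)\le e^{1/9}(z-1)_+^2$ on $[0,4/3]$, compare $z_i$ to $\|U^{(i)}\|^2/(3\mu_0 r)$, and use the difference-of-squares factorization; finally pass from $\|W-U\|_{\Fsf}^2+\|Z-V\|_{\Fsf}^2$ to $n\,d(\ubf,\ubf^*)^2$ via the orthogonal invariance and $d_c\le d$. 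The only cosmetic differences are that the paper gets the constant $14$ from $2(\|W^{(i)}\|^2+\|U^{(i)}\|^2)\le 2\cdot 7\mu_0 r$ rather than from $(2+\sqrt3)^2<14$, and bounds the fitting term via $\tfrac12(a+b)^2\le a^2+b^2$ on the matrix terms directly rather than via your Cauchy--Schwarz route, arriving at the same $m\Sigma_{\max}^2(\|U-W\|_{\Fsf}^2+\|V-Z\|_{\Fsf}^2)$.
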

\begin{proof}
First,  we have
\begin{align} \label{equ:FXY}
F(W,Z) ={} & \frac12 \|\proj_{\Omega} (M - WSZ^{\Tsf})\|_{\Fsf}^2 \leq{} \frac12 \|\proj_{\Omega} (M - W\Sigma Z^{\Tsf})\|_{\Fsf}^2 \nn \\
\leq{} & \frac12 \|M - W\Sigma Z^{\Tsf}\|_{\Fsf}^2  = \frac12 \|U \Sigma V^{\Tsf}  - W \Sigma V^{\Tsf}  + W \Sigma V^{\Tsf}  - W\Sigma Z^{\Tsf}\|_{\Fsf}^2 \nn \\
\leq{}& \| (U - W) \Sigma V^{\Tsf} \|_{\Fsf}^2 + \| W \Sigma (V - Z)^{\Tsf} \|_{\Fsf}^2  \nn \\
\leq{}& m  \Sigma_{\max}^2 \left(\|U - W\|_{\Fsf}^2 + \|V - Z\|_{\Fsf}^2 \right)\!,
\end{align}
where the first inequality is due to the optimality of $S$.

Now, let us bound the last two terms in \eqref{equ:def:tF}. It is easy to show that
\be \label{equ:G1z}
G_1(z) \leq e^{\frac19} (z - 1)^2, \quad \forall z \in [0, 4/3].
\ee
Note that $\ubf \in \mathsf{M}(m,n) \cap \Kcal(4\mu_0)$ implies that $\frac{\|W^{(i)}\|^2}{3 \mu_0 r} \leq \frac{4}{3}$. Define
$$\mathcal{I}_1 = \left\{i : \frac{\|W^{(i)}\|^2}{3 \mu_0 r} \leq 1, i\in\{1, \dots, m\}\right\}, \  \mathcal{I}_2 = \left\{i : 1 < \frac{\|W^{(i)}\|^2}{3 \mu_0 r} \leq \frac43, i\in \{1, \dots, m\}\right\}.$$
It follows from \eqref{equ:G1} and  \eqref{equ:G1z}
that
\begin{align}
\sum_{i = 1}^m G_1 \left( \frac{\|W^{(i)}\|^2}{3 \mu_0 r}\right)  ={}&\sum_{i \in \mathcal{I}_2} G_1 \left( \frac{\|W^{(i)}\|^2}{3 \mu_0 r}\right) \nn \\
 \leq{} & e^{\frac19} \sum_{i \in \mathcal{I}_2} \left( \frac{\|W^{(i)}\|^2}{3 \mu_0 r} - 1 \right)^2
\leq e^{\frac19} \sum_{i \in \mathcal{I}_2} \left( \frac{\|W^{(i)}\|^2}{3 \mu_0 r} -  \frac{\|U^{(i)}\|^2}{3 \mu_0 r} \right)^2  \nn \\
={}&  \frac{e^{\frac19}}{9\mu_0^2 r^2} \sum_{i \in \mathcal{I}_2} \left(\|W^{(i)}\| - \|U^{(i)}\|\right)^2  \left(\|W^{(i)}\| + \|U^{(i)}\|\right)^2 \nn \\
\leq{}&  \frac{2 e^{\frac19}}{9 \mu_0^2 r^2} \sum_{i \in \mathcal{I}_2} \left(\|W^{(i)} - U^{(i)}\|^2\right) \left(\|W^{(i)}\|^2 + \|U^{(i)}\|^2 \right) \nn \\
\leq{}& \frac{14 e^{\frac19}}{9 \mu_0 r}\sum_{i \in \mathcal{I}_2} \left(\|W^{(i)} - U^{(i)}\|^2\right) \leq  \frac{14 e^{\frac19}}{9 \mu_0 r} \|W - U \|_{\Fsf}^2,
\label{equ:G1Xbound}
\end{align}
where the second inequality is due to  $\frac{\|U^{(i)}\|^2}{3 \mu_0 r} \leq 1$, and the fourth inequality uses the facts that $\frac{\|U^{(i)}\|^2}{3 \mu_0 r} \leq 1$ and $\frac{\|W^{(i)}\|^2}{3 \mu_0 r} \leq \frac{4}{3}$.
Similarly, we have
\be
\sum_{j = 1}^n G_1 \left( \frac{\|Z^{(j)}\|^2}{3 \mu_0 r}\right)  \leq  \frac{14 e^{\frac19}}{9 \mu_0 r} \|Z -  V\|_{\Fsf}^2. \label{equ:G1Ybound}
\ee
Combining \eqref{equ:FXY}, \eqref{equ:G1Xbound} and \eqref{equ:G1Ybound}, we have
\be
\tF(\ubf) \equiv \tF(W,Z) \leq  \left(m  \Sigma_{\max}^2 + \frac{14 e^{\frac19}}{9 \mu_0 r}  \varrho \right) \left( \|U - W\|_{\Fsf}^2 + \|V - Z\|_{\Fsf}^2 \right)
\nn
\ee
for any $\ubf \in \mathsf{M}(m,n) \cap \Kcal(4\mu_0)$. Note that $\tF(W,Z) \equiv \rblack{\tF(WQ_{W},ZQ_{Z})}$ for any \rblack{$Q_W, Q_Z \in \mathsf{St}_r$}.  By the definition \eqref{equ:def:dcXU} of $d_c(\ubf, \ubf^*)$ and $d_c(\ubf, \ubf^*) \leq d(\ubf, \ubf^*)$, we arrive at \eqref{equ:tF:upperbound}.
\end{proof}

 Second, it follows from Lemma 6.5 in \cite{keshavan2010matrix} that
\be \label{equ:gradtF:lowerbound}
\|\rgrad \tF(\ubf)\|^2 \geq C n \epsilon^2 \Sigma_{\min}^4 d(\ubf, \ubf^*)^2
\ee
for all $\ubf \in \mathsf{M}(m,n) \cap \Kcal(4\mu_0)$ and $d(\ubf, \ubf^*) \leq \delta$ with probability at least
$1 - 1/n^4$.

Finally, combing \eqref{equ:gradtF:lowerbound} and  \eqref{equ:tF:upperbound}, noting that $\tF(\ubf^*) = 0$,  we have Theorem \ref{theorem:L:inequality:MC}. 
\section{Proofs for  Theorem \ref{theorem:complexity:bounded:g} and  Theorem \ref{theorem:complexity:no:bounded:g}}
\subsection{Proof for  Theorem \ref{theorem:complexity:bounded:g}}\label{section:proof:bounded:g}
For fixed $s$, \rblack{we again drop the subscript $s$} for simplicity. Similar to Lemma \ref{lemma:sufficient:decrease:epoch}, we  have
\begin{align}\label{equ:f:x:K:expectation:bounded:g}
\Ebb_{\xi_{[K-1]}}[\mF(X^{K})]  \leq f(X^0) -  \sum_{k=0}^{K-1}\Delta_k \Ebb_{\xi_{[K-1]}}\big[\| \rgrad\,f(X^k) \|_{\Fsf}^2 \big], 
\end{align}
where \rblack{$\Delta_k$ is given in  \eqref{equ:Delta:bounded:g}.}
 The proof of \eqref{equ:f:x:K:expectation:bounded:g} is the same as that of Lemma \ref{lemma:sufficient:decrease:epoch}  except that  $\|X^k - X^0\|_{\Fsf} \leq 2\sqrt{r}$ is replaced by
\rblack{$\|X^k - X^0\|_{\Fsf} \leq 3C^{\Mcal}L_1^{\Mcal} K$,}
because
\begin{align}
\|X^k - X^0\|_{\Fsf} \leq{} & \sum_{j =1}^{k}  \|X^j - X^{j-1} \|_{\Fsf} =   \sum_{j =1}^{k}  \| \Rscr\big({X^{j-1}}, -\tau \Dcal(X^{j-1}, \xi_{j-1})\big) - \Rscr(X^{j-1},0) \|_{\Fsf} \nn \\
\leq{}& \sum_{j = 1}^k \rblack{L_1^{\Mcal}} \tau \|\Dcal(X^{j-1}, \xi_{j-1})\|_{\Fsf} \leq \sum_{j = 1}^k \rblack{L_1^{\Mcal}}  \tau \|\Gcal(X^{j-1}, \xi_{j-1})\|_{\Fsf} \leq 3C^{\Mcal} \rblack{L_1^{\Mcal}} K \tau, \nn
\end{align}
where the second inequality is due to \rblack{\eqref{equ:retraction:b1:general}}.

Note that Theorem \ref{theorem:svrg:grad:norm} still holds.  Next we estimate $\Delta_{\min}$.
Again we have $\Delta_{\min}  = \Delta_0$.
\rblack{Note that $\tilde{L}^{\Mcal} = \tilde{L}^{\Mcal}_1 + \tilde{L}^{\Mcal}_2$ and $\tilde{L}^{\Mcal}_1 \geq 1$, together with \eqref{equ:K:svrg:bounded:g} and $0 \leq \mu \leq 2/3$,}  we obtain
\be\label{equ:section:proof:bounded:g:00:11}
\frac{\tilde{L}^{\Mcal} L^2\tau^2}{\nu^2 |\Bsf|}   \leq  c^2K^{\mu - 2}, \quad 
\rblack{\frac{\tilde{L}_1^{\Mcal} + \tilde{L}_2^{\Mcal} K \tau}{\tilde{L}^{\Mcal}}  + \frac{2}{\tilde{L}^{\Mcal} \beta \tau}\leq K^{1- \mu} + \frac{2K}{c}}.
\ee
With the first assertion in \eqref{equ:section:proof:bounded:g:00:11} and \eqref{equ:K:svrg:bounded:g}, we have
\be\label{equ:usvrg:tau:beta:001:general}
\rblack{\Gamma_0^{\Mcal}  \leq  \frac{\exp(c^2 + 2c) - 1}{c^2 + 2c} K.}
\ee
Using \eqref{equ:section:proof:bounded:g:00:11} and \eqref{equ:usvrg:tau:beta:001:general},  by  \rblack{direct} calculations,  we obtain from \eqref{equ:section:proof:bounded:g:gamma:k} \rblack{with $k = 0$} and \eqref{equ:K:svrg:bounded:g} that
%
$$
\frac{\Delta_0}{\tau} \geq  \nu - \frac{\nu}{2} \frac{\hat L^{\Mcal}}{\sqrt{\tilde{L}^{\Mcal}} L} \exp(c^2 + 2c)  c  \geq \frac12 \nu,
$$
where the second inequality is due to \eqref{equ:c:bounded:g}.

Similar to  the proof for Theorem \ref{theorem:complexity}, we arrive at  Theorem \ref{theorem:complexity:bounded:g}.

\subsection{Proof for   Theorem \ref{theorem:complexity:no:bounded:g}}\label{section:proof:no:bounded:g}
For fixed $s$,  we again drop the subscript $s$ for simplicity. Similar to Lemma \ref{lemma:sufficient:decrease:epoch}, we  have
\begin{align} \label{equ:f:x:K:expectation:no:bounded:g}
\Ebb_{\xi_{[K-1]}}[\mF(X^{K})]  \leq f(X^0) -  \sum_{k=0}^{K-1}\Delta_k \Ebb_{\xi_{[K-1]}}\big[\| \rgrad\,f(X^k) \|_{\Fsf}^2 \big],
\end{align}
where \rblack{$\Delta_k$ is given in \eqref{equ:Delta:no:bounded:g}.}
The proof of \eqref{equ:f:x:K:expectation:no:bounded:g} is the same as that of Lemma \ref{lemma:sufficient:decrease:epoch} except that \eqref{equ:lemma:Xk:update:a1} is replaced by
\begin{align}
\|X^{k+1} - X^0\|_{\Fsf}^2  
\leq{} & \left(1 + \beta \right) \|X^k - X^0\|_{\Fsf}^2  + \Big(1 + \frac{1}{\beta} \Big)  \|\Rscr\big(X^k, -\tau \Dcal (X^k,\xi_k)\big) - \Rscr\big(X^k, 0\big)\|_{\Fsf}^2  \nn \\
\leq{} &  \left(1 + \beta \right) \|X^k - X^0\|_{\Fsf}^2  +   (L_1^{\Mcal} \tau)^2 \Big(1  + \frac{1}{\beta}\Big)\|\Dcal (X^k,\xi_k)\|_{\Fsf}^2,
\label{equ:lemma:Xk:update:no:bounded:g:a1}
\end{align}
where the first inequality is due to the Cauchy-Schwarz inequality, $X^k = \Rscr(X^k, 0)$ and  \eqref{equ:Xk:update:usvrg}, and the second inequality is  due to \rblack{\eqref{equ:retraction:b1:general}}.


Note that Theorem \ref{theorem:svrg:grad:norm} still holds.  Next we estimate $\Delta_{\min}$.
Again we have $\Delta_{\min}  = \Delta_0$.
We can obtain from \eqref{equ:K:svrg:no:bounded:g} that
\be \label{equ:section:proof:no:bounded:g:00:11}
\rblack{\frac{L^2 (L_1^{\Mcal})^2 \tau^2 }{\nu^2 |\Bsf|}   \leq \frac{c^2}{K^2},\ \Gamma_0^k \leq \frac{\exp(c^2 + 2c) - 1}{c + 2} K}.
\ee
\rblack{Noting that $\tau \leq  c\nu/(L L_1^{\Mcal})$},  \rblack{by some simple calculations, we know from}  \eqref{equ:section:proof:no:bounded:g:00:11}, \eqref{equ:K:svrg:no:bounded:g}, \eqref{equ:c:no:bounded:g} \rblack{and \eqref{equ:Delta:no:bounded:g} with $k=0$}  that
\begin{align} \label{equ:usvrg:tau:beta:Delta0:divide:tau:final:no:bounded:g}
 \frac{\Delta_0}{\tau} \geq{}  \nu - \frac{\nu}{2} \frac{\hat L^{\Mcal}}{L L_1^{\Mcal}} \left(\frac{c+1}{c+2} \exp(c^2 + 2c) + \frac{1}{c+2}\right) c \geq \frac{\nu}{2},
\end{align}
and thus $\Delta_{\min} \geq \nu\tau/2$.

Similar to  the proof for Theorem \ref{theorem:complexity}, we arrive at  Theorem \ref{theorem:complexity:no:bounded:g}.

\bibliography{svrgOptM17}
\bibliographystyle{siam}

\end{document}